\DeclareTextFontCommand{\textcyr}{\fontencoding{OT2}
    \fontfamily{wncyr}\fontseries{m}\fontshape{n}\selectfont}
\newtheorem{lemma}{Lemma}[section]
\newtheorem{proposition}[lemma]{Proposition}
\newtheorem{theorem}[lemma]{Theorem}
\newtheorem{corollary}[lemma]{Corollary}
\theoremstyle{definition}
\newtheorem{example}[lemma]{Example}
\newtheorem{defs}[lemma]{Definitions}
\newtheorem{question}[lemma]{Question}
\newtheorem{subsec}[lemma]{}
\theoremstyle{remark}
\newtheorem{remark}[lemma]{Remark}
\numberwithin{equation}{section}
\newcommand{\id}{\mathrm{id}}
\newcommand{\diag}{\mathrm{diag}}
\newcommand{\Pic}{\operatorname{Pic}}
\newcommand{\Lie}{\operatorname{Lie}}
\newcommand{\Inn}{\operatorname{Inn}}
\newcommand{\Spec}{\operatorname{Spec}}
\newcommand{\Proj}{\operatorname{Proj}}
\newcommand{\Gal}{\operatorname{Gal}}
\newcommand{\SL}{\operatorname{SL}}
\newcommand{\PGL}{\operatorname{PGL}}
\newcommand{\GL}{\operatorname{GL}}
\newcommand{\gSL}{\operatorname{\mathbf{SL}}}
\newcommand{\gPGU}{\operatorname{\mathbf{PGU}}}
\newcommand{\Hom}{\operatorname{Hom}}
\newcommand{\Aut}{\operatorname{Aut}}
\newcommand{\Sym}{{\mathfrak{S}}}
\newcommand{\rank}{\operatorname{rank}}
\newcommand{\A}{\mathbb{A}}
\renewcommand{\P}{\mathbb{P}}
\newcommand{\Z}{\mathbb{Z}}
\newcommand{\bbZ}{\mathbb{Z}}
\newcommand{\R}{\mathbb{R}}
\newcommand{\C}{\mathbb{C}}
\newcommand{\Kbar}{\overline{K}}
\def\Out{{\mathrm{Out}}}
\def\ttt{{\mathfrak{t}}}
\def\G{{\mathbb{G}}}
\def\AA{{\mathbf{A}}}
\def\BB{{\mathbf{B}}}
\def\CC{{\mathbf{C}}}
\def\GG{{\mathbf{G}}}
\def\ve{{\varepsilon}}
\def\A{{\mathbb{A}}}
\def\into{\hookrightarrow}
\DeclareTextFontCommand{\textcyr}{\fontencoding{OT2}
    \fontfamily{wncyr}\fontseries{m}\fontshape{n}\selectfont}
\def\C{{\mathbb{C}}}
\def\Z{{\mathbb{Z}}}
\def\Hom{{\rm Hom}}
\newcommand{\isoto}{\overset{\sim}{\to}}
\def\SAut{{\mathrm{SAut}}}
\def\SAut{{\rm SAut}}
\def\twisted{{(\Sym_3,\Gamma)\text{\rm -twisted}}}
\def\SS{{\text{\rm natural}}}
\def\sign{{\rm sign\,}}
\def\sS{{{\rm SU}_3}}
\def\SU{{\bf SU}}
\def\SL{{\bf SL}}
\def\GL{{\bf GL}}
\newcommand{\lt}{\mathfrak{t}}
\newcommand{\Lt}{\mathfrak{t}_L}
\def\bbP{{\mathbb{P}}}
\newcommand{\birat}{\overset{\simeq}{\dashrightarrow}}
\def\one{{\mathbf{1}_3}}
\newcommand{\calD}{\mathcal{D}}
\newcommand{\calO}{\mathcal{O}}
\newcommand{\calC}{\mathcal{C}}
\newcommand{\frakS}{\mathfrak{S}}
\newcommand{\fraka}{\mathfrak{a}}
\renewcommand{\bbP}{\mathbb{P}}
\newcommand{\bbR}{\mathbb{R}}
\renewcommand{\bbZ}{\mathbb{Z}}
\newcommand{\bbC}{\mathbb{C}}
\newcommand{\bbT}{\mathbb{T}}
\newcommand{\bbF}{\mathbb{F}}
\newcommand{\bbS}{\mathbb{S}}
\newcommand{\bfF}{\mathbf{F}}
\renewcommand{\Gal}{\textup{Gal}}
\renewcommand{\GL}{{\bf GL}}
\renewcommand{\Spec}{\textup{Spec}}
\renewcommand{\Proj}{\textup{Proj}}
\renewcommand{\Pic}{\textup{Pic}}
\renewcommand{\PGL}{{\bf PGL}}
\newcommand{\PSL}{{\bf PGL}}
\newcommand{\Sp}{{\bf Sp}}
\renewcommand{\Hom}{\textup{Hom}}
\renewcommand{\id}{\textup{id}}
\newcommand{\da}{\dasharrow}
\renewcommand{\Aut}{\textup{Aut}}
\renewcommand{\rank}{\textup{rank}}
\renewcommand{\SL}{{\bf SL}}
\newcommand{\beq}{\begin{equation}}
\newcommand{\eeq}{\end{equation}}
\newcommand{\PSU}{{\bf PGU}}
\newcommand{\SO}{{\bf SO}}
\renewcommand{\SU}{{\bf SU}}
\def\PSU{{\bf PGU}}
\def\PSL{{\bf PGL}}
\def\charact{{\rm char}}
\def\SS{{\mathfrak{S}}}
\def\der{{\rm der}}
\def\tor{{\rm tor}}
\def\onto{{\twoheadrightarrow}}
\def\BB{{\bf B}}
\def\PGU{{\bf PGU}}
\def\tr{{\rm tr\,}}
\def\U{{\bf U}}
\def\G{{\mathbb{G}}}
\def\dasheq{{\stackrel{\simeq}{\dasharrow}}}
\def\Kbar{{\overline{K}}}
\def\sS{{\SU_3}}
\def\St{{\rm St}}
\def\Tw{{\rm Tw}}
\newcommand{\PP}{{\mathbb{P}}}
\title[Real Cayley groups]
{
Real reductive Cayley groups of rank 1 and 2\\
}
\author{Mikhail Borovoi\\ with an appendix by Igor Dolgachev}
\address{Borovoi: Raymond and Beverly Sackler School of Mathematical Sciences,
Tel Aviv University, 69978 Tel Aviv, Israel}
\email{borovoi@post.tau.ac.il}
\thanks{Borovoi was partially supported
by the Hermann Minkowski Center for Geometry}
\address{Dolgachev: Department of Mathematics, University of Michigan,, 525 E. University
Av., Ann Arbor, MI, 49109, USA}
\email{idolga@umich.edu}
\subjclass[2010]{20G20, 20G15, 14E05}
\keywords{Linear algebraic group, Cayley group, Cayley map,
equivariant birational isomorphism, algebraic surface, elementary link}
\begin{document}


\begin{abstract}
A linear algebraic group $G$ is over a field $K$ is called a Cayley $K$-group if it admits
a Cayley map, i.e., a $G$-equivariant $K$-birational isomorphism
between the group variety $G$ and its Lie algebra.
We classify real reductive algebraic groups of absolute rank 1 and 2 that are Cayley $\R$-groups.
\end{abstract}

\maketitle

\section{Introduction}
Let $G$ be a connected linear algebraic group defined over a field $K$,
and let $\Lie(G)$ denote its Lie algebra.
The following definitions are due to Lemire, Popov and Reichstein \cite{LPR}:

\begin{defs}[\cite{LPR}]
A {\em Cayley map for $G$} is a $K$-birational isomorphism
$G\birat\Lie(G)$ which is $G$-equivariant with respect to
the action of $G$ on itself by conjugation and the action
of $G$ on $\Lie(G)$ via the adjoint representation.
A linear algebraic $K$-group is called a {\em Cayley group} if it admits a Cayley map.
A linear algebraic $K$-group is called a {\em stably Cayley group} if  $G\times_K (\mathbb{G}_{m,K})^r$ is Cayley for some $r\ge 0$,
where $\mathbb{G}_{m,K}$ denotes the multiplicative group.
\end{defs}

Lemire, Popov and Reichstein \cite{LPR} classified  Cayley and stably Cayley simple groups
over an algebraically closed field $k$ of characteristic 0.
Borovoi, Kunyavski\u\i, Lemire and Reichstein \cite{BKLR} classified  stably Cayley simple $K$-groups,
and later Borovoi and Kunyavski\u\i\ \cite{BK} classified  stably Cayley semisimple $K$-groups,
over an arbitrary field $K$ of characteristic 0.
Clearly any Cayley $K$-group is stably Cayley.
In the opposite direction,
some of the stably Cayley $K$-groups are known to be Cayley, see \cite[Examples 1.9, 1.11 and 1.16]{LPR}.
For other stably Cayley $K$-groups, it is a difficult problem to determine whether they are Cayley or not.
By \cite[Lemma 5.4(c)]{BKLR} the answer to the question whether a $K$-group is Cayley depends only on the
equivalence class of $G$ up to inner twisting.

By \cite[Corollary 7.1]{BKLR} all the reductive $K$-groups of rank $\le 2$ over a field $K$ of characteristic 0
are stably Cayley (by the rank we always mean the {\em absolute} rank).
We would like to know, which of those stably Cayley $K$-groups of rank $\le 2$ are Cayley.

The case of a simple group of type $\GG_2$ was settled in \cite[\S\,9.2]{LPR} and Iskovskikh's papers \cite{Isk2}, \cite{Isk3}.
Namely, a simple group of type $\GG_2$ over an algebraically closed field $k$ of characteristic 0 is not Cayley.
Hence no simple $K$-group of type $\GG_2$ over a field $K$ of characteristic 0 is Cayley.

Popov  \cite{popov-luna} proved in 1975 that,
contrary to what was expected (cf.~\cite[Remarque, p.~14]{L}),
the group $\SL_3$ over an {\em algebraically closed field}  $k$
of characteristic 0 is Cayley; see \cite[Appendix]{LPR}
for  Popov's original proof, and \cite[\S\,9.1]{LPR}
for an alternative proof.

Here we are interested in $\R$-groups, where $\R$ denotes the field of real numbers.
If $G$ is an {inner} form of a  split reductive $\R$-group, and
$G_\C:=G\times_\R \C$ is {\em stably Cayley} over $\C$,
then by \cite[Remark 1.8]{BKLR} $G$ is {\em stably Cayley} over $\R$.
Similarly, since $\SL_{3,\C}$ is {\em Cayley} over $\C$ by Popov's theorem,
one might expect that the split $\R$-group $\SL_{3,\R}$ is {\em Cayley} over $\R$.
However, this turns out to be false, see Theorem  \ref{thm:SL3} of Appendix A.
On the other hand, the outer form  $\SU_3$ of the split group $\SL_{3,\R}$ is Cayley,
see  Theorem \ref{thm:su3-Igor} of Appendix A and Corollary \ref{cor:su3}.

We recall the classification of reductive $K$-groups of rank $\le 2$.
A reductive $K$-group of rank 1 is either a $K$-torus or a simple $K$-group of type $\AA_1$.
A reductive $K$-group of rank 2 is either not semisimple, or semisimple of type $\AA_1\times\AA_1$,
or simple of one of the types $\AA_2$, $\BB_2=\CC_2$, or $\GG_2$.
If a reductive  $K$-group of rank 2 is not semisimple, then either it is a $K$-torus or it is isogenous to the product of a one-dimensional $K$-torus
and a simple $K$-group of type $\AA_1$.

We recall the classification of {\em real} simple groups of type $\AA_2$.
Such an $\R$-group is isomorphic to one of the groups $\SL_{3,\R}$, $\PGL_{3,\R}$,
$\SU_3$, $\PGU_3$, $\SU(2,1)$, or $\PGU(2,1)$.
Here, following the Book of Involutions \cite[\S\,23]{KMRT}, we write $\PGU_n$ rather than ${\bf PSU}_n$ for the corresponding adjoint group.
We write $\SU(2,1)$ and  $\PGU(2,1)$ for the (inner) forms of $\SU_3$ and  $\PGU_3$, respectively,
 corresponding to the Hermitian form with diagonal matrix $\diag(1,1,-1)$.

In this paper we classify real reductive algebraic groups of  rank $\le 2$ that are Cayley.
To be more precise, for each real reductive group of rank 1 or 2 (up to an isomorphism) we determine whether it is Cayley or not:

\begin{theorem}\label{thm:maintheorem}
Let $G$ be a connected reductive algebraic $\R$-group  of absolute rank $\le 2$   over the field $\R$ of real numbers.
If $G$ is simple of type $\GG_2$ or is isomorphic to $\SL_{3,\R}$, or $\PGU_3$, or $\PGU(2,1)$, then $G$ is {\em not} Cayley.
Otherwise $G$ is Cayley.
\end{theorem}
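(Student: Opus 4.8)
The plan is to treat Theorem~\ref{thm:maintheorem} as a classification, carried out in three stages: enumerate, reduce, and decide. First I would list, up to $\R$-isomorphism, all connected reductive $\R$-groups $G$ of absolute rank $\le 2$. Writing $G$ in terms of its central torus and its derived group, the root system of the derived group is one of: the empty system, $A_1$, $A_1\times A_1$, $A_2$, $B_2=C_2$, or $\GG_2$, each equipped with an isogeny type and an $\R$-structure (a real form). For type $A_2$ a nontrivial central torus would force absolute rank $3$, so the type $A_2$ groups occurring are exactly the six semisimple forms $\SL_{3,\R}$, $\SU_3$, $\SU(2,1)$, $\PGL_{3,\R}$, $\PGU_3$, $\PGU(2,1)$; the remaining types admit central tori, which I will show contribute no obstruction.

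The reduction rests on the theorem of Lemire--Popov--Reichstein~\cite{LPR}, extended to non-closed fields in~\cite{BKLR}, that the Cayley property of a reductive group is governed by the equivariant birational geometry of its maximal torus: $G$ is Cayley over $\R$ if and only if there is a birational isomorphism $T\birat\Lie(T)$ equivariant for the action of the Weyl group $W$ together with the Galois group $\Gamma=\Gal(\C/\R)$, i.e.\ for the finite group $W\rtimes\Gamma$ (with $\Gamma$ acting through the diagram automorphism in the outer case and trivially on $W$ in the inner case). Since $\Lie(T)\cong\mathbb{A}^2$ is a linear $(W\rtimes\Gamma)$-representation and, in the critical cases, $T$ is a two-dimensional torus, this recasts the problem as the equivariant birational geometry of rational surfaces under the finite group $W\rtimes\Gamma$. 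A central torus contributes a factor of $T$ on which $W$ acts trivially; as every torus of rank $\le 2$ is a rational $\R$-variety, such a factor can be split off and contributes nothing, reducing all positive questions to the $\R$-simple derived groups.

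For the affirmative classical cases I would avoid the surface machinery and exhibit Cayley maps directly. The classical Cayley transform $X\mapsto(1+X)(1-X)^{-1}$ is defined over $\R$ and yields a $G$-equivariant birational isomorphism between the Lie algebra and the group for every real form of an orthogonal or symplectic group; this settles all real forms of type $A_1$ (via $\SL_2\cong\Sp_2$ and $\PGL_2\cong\SO_3$) and of type $B_2=C_2$ affirmatively, and type $A_1\times A_1$ follows by taking products of Cayley maps, by the orthogonal transform for the forms of $\SO_4$, and by Weil restriction (which carries the complex Cayley map of $\SL_2$ to an $\R$-Cayley map of $R_{\C/\R}\SL_2$). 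Together with the peeling-off of rational central tori, this disposes of every group whose derived type is not $A_2$ or $\GG_2$.

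The heart of the matter, and the main obstacle, is type $A_2$ (type $\GG_2$ being inherited from the algebraically closed case~\cite[\S\,9.2]{LPR} as recalled in the introduction, and $\SL_{3,\R}$ being supplied by Theorem~\ref{thm:SL3} of the Appendix). Here $T$ is two-dimensional with $W=\Sym_3$, and for the outer forms $\Gamma$ acts through the nontrivial diagram automorphism; I would realize the question as one between smooth projective models of the rational surfaces $T$ and $\Lie(T)\cong\mathbb{A}^2$, the latter carrying a linear $(W\rtimes\Gamma)$-action, and decide the equivariant birational class case by case. The affirmative cases $\SU_3$ (already recorded in Corollary~\ref{cor:su3}), $\SU(2,1)$, and $\PGL_{3,\R}$ would be established by constructing the required isomorphism as a composite of elementary links between minimal rational surfaces with the given finite-group action. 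The genuinely hard part is the negative direction for $\PGU_3$ and $\PGU(2,1)$: since both groups are nevertheless \emph{stably} Cayley (by the inner-form results of~\cite{BKLR} cited in the introduction), no stable invariant can detect the obstruction, and one must separate rationality from stable rationality. For this I would invoke Iskovskikh's classification of minimal rational surfaces with a finite group action and the factorization of equivariant birational maps into elementary links, and show that the surface attached to $T$ and the one attached to $\Lie(T)$ lie in distinct equivariant birational classes by comparing their minimal models (del Pezzo versus conic-bundle type) and the associated link invariants. Carrying out this equivariant rigidity analysis is the technical core; by contrast the enumeration and the Cayley-transform constructions are comparatively routine.
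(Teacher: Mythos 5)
Your overall architecture --- enumerate the groups, reduce to the maximal torus via Proposition \ref{prop:cor6.5}, dispose of the classical cases by explicit Cayley transforms and inner-form twisting, and attack type $\AA_2$ and $\GG_2$ through the equivariant birational geometry of rational surfaces and Iskovskikh's elementary links --- is exactly the paper's. But two of your steps, as written, do not go through. First, the claim that a central torus ``can be split off'', reducing all positive cases to the $\R$-simple derived groups, fails whenever the radical meets the derived group nontrivially. Over $\R$ the groups concerned are the forms of $\GL_2$, namely $\GL_{2,\R}$ and the unitary groups $\U(2)$ and $\U(1,1)$: there the maximal torus is only \emph{isogenous}, not isomorphic, to the product of the central torus with a maximal torus of $G^{\der}$, so no factor splits off. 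Your list of classical Cayley transforms covers only orthogonal and symplectic forms and therefore misses $\U(2)$ and $\U(1,1)$. The paper closes this case by classifying the forms of $\GL_2$ up to inner twisting as $\GL_2$ and $\U_{2,L/K}$ and invoking the Cayley map for algebras with involution (Example \ref{ex3}), which does cover unitary groups; you need that ingredient, or a direct argument for the rank-$2$ torus $(\bbS^1)^2$ with the swap action.

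Second, for the negative cases the invariant you propose --- distinguishing the minimal models as ``del Pezzo versus conic-bundle type'' --- cannot detect the obstruction: the natural smooth equivariant compactification of the maximal torus of $\PGU_3$ constructed in the Appendix is a minimal \emph{del Pezzo surface of degree $6$} with $\Pic^{G}\cong\bbZ$, and $\bbP^2_\R$ (the compactification of $\Lie(T)$) is likewise in class $(\calD)$, so both sides have del Pezzo minimal models and your dichotomy does not separate them. The actual obstruction, in both the $\PGU_3$ and the $\SL_{3,\R}$ proofs, is arithmetic: every admissible chain of elementary links from the degree-$6$ del Pezzo surface toward $\bbP^2_\R$ requires at some stage an $\frakS_3$-invariant \emph{real point} (on the surface itself, on $\bfF_0$, or on a degree-$5$ del Pezzo surface), and the Appendix shows that the required points do not exist for these two tori --- whereas for $\SU_3$ they do, which is precisely why that case comes out Cayley. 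Without identifying this point-existence criterion, the ``equivariant rigidity analysis'' you defer is not a technicality but the entire content of the negative half of the theorem.
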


Theorem \ref{thm:maintheorem}  will be proved case by case.
The cases when $G$ is Cayley will be treated by the author in the main text of the paper.
In the case when $G$ is of type $\GG_2$  it is known that $G$ is not Cayley, see above.
The other cases when $G$ is not Cayley (and again the case of $\SU_3$ when $G$ is Cayley)
will be treated by Igor Dolgachev in Appendix A.
\smallskip

Note that by \cite[Corollary 7.1]{BKLR}
any $K$-group $G$ of absolute rank $\le 2$ over a field $K$ of characteristic 0 is stably Cayley, that is,
there exists $r\ge 0$ such that the group $G\times_K \G_{m,K}^r$ is Cayley, where $\G_{m,K}$ denotes the multiplicative group over $K$.
The following theorem, which generalizes \cite[Proposition 9.1]{LPR}, shows that
one can always  take $r=2$.

\begin{theorem}\label{thm:2}
Let $G$ be a connected reductive algebraic $K$-group  of absolute rank $\le 2$ over a field $K$ of characteristic $0$.
If $G$ is of absolute rank $1$, then $G$ is Cayley.
If $G$ is of absolute rank $2$, then $G\times_K \G_{m,K}^2$ is Cayley.
\end{theorem}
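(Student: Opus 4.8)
The plan is to reduce the Cayley property of a reductive group to an equivariant birational question for its maximal torus, dispose of the rank‑$1$ case by hand, and obtain the rank‑$2$ statement by bounding the number of split factors needed to linearize the torus. Let $T\subset G$ be a maximal torus, $W=N_G(T)/T$ its Weyl group, $\Gamma=\Gal(\Kbar/K)$, and write $L=X_*(T)$ for the cocharacter lattice, a module over the finite group $\Pi$ generated by $W$ and the image of $\Gamma$. I would invoke the reduction of \cite{LPR,BKLR}: a reductive $K$‑group $H$ with maximal torus $S$ is Cayley if and only if there is a $\Pi$‑equivariant $K$‑birational isomorphism $S\birat\Lie(S)$. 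Since both $S=L\tens\G_m$ and $\Lie(S)=L\tens\G_a$ are built from $L$, the Cayley property of $G\times_K\G_m^r$ amounts to $T\times\G_m^r$ being $\Pi$‑equivariantly birational to $\lt\times\A^r$, with $\Pi$ acting trivially on the added split factors. By \cite[Cor.~7.1]{BKLR} such an $r$ exists; the content of the theorem is the bound $r\le 2$.

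For rank $1$ I would argue directly. If $G=T$ is one‑dimensional it is abelian, so equivariance is vacuous and a Cayley map is just a $K$‑birational isomorphism $T\birat\A^1$; every one‑dimensional torus is a rational curve, so $G$ is Cayley. If $G$ is semisimple of rank $1$, then $G=\SL_1(D)$ or $G=\PGL_1(D)\cong\SO(q)$ for a quaternion algebra $D$ and a ternary form $q$, and the classical Cayley transform $x\mapsto(1-x)(1+x)\inv$ gives an equivariant $K$‑birational isomorphism $\Lie(G)\birat G$ (carrying trace‑, resp. skew‑, elements to norm‑one, resp. orthogonal, elements). Hence every reductive $K$‑group of absolute rank $1$ is Cayley.

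For rank $2$ the engine is a permutation resolution $0\to C\to P\to L\to 0$ of $\Pi$‑lattices with $P$ and $C$ permutation, giving an exact sequence of tori $1\to T_C\to T_P\to T\to 1$ with $T_P$ quasi‑trivial. I would combine four ingredients: (i) the open immersion $T_P\into\lt_P$, a $\Pi$‑equivariant Cayley map because a permutation action is at once multiplicative and additive; (ii) the Maschke splitting $\lt_P\cong\lt\oplus\lt_C$ in characteristic zero; (iii) the $\Pi$‑equivariant birational triviality of the $T_C$‑torsor $T_P\to T$, which follows from $H^1(\Pi,T_C(K(T)))=0$ via Shapiro's lemma and Hilbert~90 (here one uses that $C$ is a permutation module and that $\Pi$ acts faithfully on $T$, so the ambient field extensions are Galois); and (iv) the no‑name lemma, untwisting the permutation representation $\lt_C$ to a trivial one. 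Together these produce a $\Pi$‑equivariant birational isomorphism $T\times\G_m^{\,\rank C}\birat\lt\times\A^{\rank C}$, so $G\times\G_m^{\rank C}$ is Cayley, and padding with further copies of $\G_m$ (which preserves the Cayley property) reduces matters to making $\rank C\le 2$.

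The hard part is making this count sharp, i.e. realizing $L$, for every rank‑$2$ root datum and every $K$, as a quotient of a permutation lattice by a permutation lattice of rank $\le 2$. The delicacy is that some rank‑$2$ lattices are not even stably permutation: for the split $\SL_3$ one has $L=A_2$, the root lattice of $S_3$, whose restriction to the order‑$3$ cyclic subgroup has $\hat H^{-1}(C_3,A_2)=\Z/3\neq0$, and accordingly $\SL_{3,\R}$ is not Cayley (Theorem~\ref{thm:SL3}); here the naive resolution has defect $>2$. I expect the sharp bound to require running through the finitely many conjugacy classes of finite subgroups of $\GL_2(\Z)$ together with their admissible Galois structures, and handling the resistant cases ($A_2$, $B_2$, $G_2$, and the outer or nonsplit forms) through the equivariant birational geometry of the two‑dimensional tori regarded as rational surfaces—the minimal‑model and elementary‑link technology of Iskovskikh and Dolgachev used elsewhere in the paper. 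Establishing that exactly two split factors always suffice, uniformly in $K$, is the crux.
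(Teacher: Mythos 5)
Your rank-$1$ argument is fine and essentially coincides with the paper's (rationality of one-dimensional tori; the Weil--Cayley transform for forms of $\SL_2$ and $\PGL_2$, all of whose twisted forms are inner). The reduction to a $W$-equivariant birational isomorphism $T\dasharrow\Lie(T)$ via \cite[Corollary 6.5]{BKLR} is also the paper's starting point.

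The rank-$2$ half, however, is not a proof: you set up the standard stable-Cayley machinery (permutation resolution $0\to C\to P\to L\to 0$, torsor triviality, no-name lemma), correctly reduce the theorem to producing such a resolution with $\rank C\le 2$ for every rank-$2$ root datum and every admissible $(W,\Gamma)$-structure, and then explicitly defer that step (``Establishing that exactly two split factors always suffice \dots is the crux''). That deferred step \emph{is} the theorem, so the gap is genuine. It is also not clear that your mechanism can be made to work as stated: you need $C$ and $P$ to be permutation lattices for the full group $\Pi$ generated by the Weyl group and the Galois image, with $\rank P=\rank L+2$, and you produce no such resolution for any of the nontrivial cases ($\AA_2$ with its inner and outer forms, $\GG_2$). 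Your side remarks point in the wrong direction as well: the non-Cayleyness of $\SL_{3,\R}$ is not detected by $\hat H^{-1}(C_3,Q(A_2))$ --- the lattice is the same over $\C$, where $\SL_3$ \emph{is} Cayley --- and the elementary-link machinery of the Appendix is used in the paper only for the negative results of Theorem \ref{thm:maintheorem}, not for Theorem \ref{thm:2}.

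What the paper actually does is more economical and avoids the classification you anticipate. Most rank-$2$ groups (non-semisimple ones, type $\AA_1\times\AA_1$, type $\BB_2=\CC_2$, and $\PGL_3$ with its inner forms) are already Cayley with $r=0$, by Propositions \ref{pr2}--\ref{pr4} and Example \ref{ex2}. For the remaining types $\GG_2$ and $\AA_2$, the paper imports a single explicit $\SS_3\times\SS_2$-equivariant birational isomorphism $T\times_K\G_{m,K}^2\dasheq\ttt\times_K\A_K^2$ from the proof of \cite[Proposition 9.11]{LPR} (Proposition \ref{prop:LPR-G2}), where $\SS_2$ acts by inversion, and then propagates it: restricting to the standard embedding $\SS_3\into\SS_3\times\SS_2$ gives $\SL_3\times\G_m^2$; twisting by the quadratic Galois cocycle into the central $\SS_2$ (Lemma \ref{lem5.4(a)}) and restricting to the standard, respectively twisted, embedding of $\SS_3$ gives $\SU_3\times\G_m^2$ and $\PGU_3\times\G_m^2$; all other forms follow by inner twisting (Proposition \ref{rem2.4}). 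If you want to salvage your approach, you would need to exhibit the required rank-$4$ permutation presentations case by case; as written, the proposal establishes only the rank-$1$ statement.
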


The following question generalizes \cite[Remark 9.13]{LPR}.
\begin{question}
Let $G$ be a reductive $K$-group of absolute rank 2 that is not Cayley, for example $\gSL_{3,\R}$.
Is  $G\times_K \G_{m,K}$ a Cayley group?
\end{question}

\begin{question}
Are the $\R$-groups $\PGU_{2n+1}$ Cayley for $n\ge 2$?
(Note that these $\R$-groups are stably Cayley, see \cite[Thm.~1.4]{BKLR}.)
\end{question}

The plan of the rest of the paper is as follows.
In Section \ref{sec:2} we reproduce some examples of Cayley groups from \cite{LPR},
and state some known properties of Cayley groups.
In Section \ref{sec:3} we prove  Theorem \ref{thm:maintheorem} modulo results of Section \ref{sec:4} and of Appendix A.
In Section \ref{sec:4} we treat the  case  $\SU_3$ of Theorem \ref{thm:maintheorem}, using explicit calculations.
In Section \ref{sec:SL3} we prove Theorem \ref{thm:2} (case by case).
In Appendix A, Igor Dolgachev treats
the difficult cases $\SL_{3,\R}$ and  $\PGU_3$ of Theorem \ref{thm:maintheorem} (and again the case $\SU_3$),
using the theory of elementary links due to Iskovskikh \cite{Isk1}, \cite{Isk2}, \cite{Isk3}.
In Appendix B, contributed by the anonymous referee, the case of the group $\PGL_1(A)$
for a central simple algebra $A$  of degree $n$ over a field $K$ of positive characteristic $p$ dividing $n$ is considered.

\medskip
The author is very grateful to Igor Dolgachev for writing Appendix A.
The author thanks the referee for writing Appendix B.
The author thanks Zinovy Reichstein for most helpful comments.

\section{Preliminary remarks}\label{sec:2}

We reproduce some examples from \cite{LPR}. Note that in \cite{LPR} it is always assumed that the characteristic of $K$ is zero,
while we attempt to state these results assuming that $K$ is a field of arbitrary characteristic.

\begin{example}[Cf. {\cite[Ex. 1.9]{LPR}}]\label{ex2.1}
Consider a finite-dimensional associative $K$-algebra $A$ with unit element 1, over a field $K$ of arbitrary characteristic,
and the $K$-group $A^\times$ of invertible elements of $A$. Then  clearly $A^\times$ is Cayley.
In particular, the $K$-group $\GL_{n,K}$ is Cayley.
\end{example}

\begin{example}[Cf. {\cite[Ex. 1.11]{LPR}}]\label{ex2}
Let $A$ be a {\em central simple} $K$-algebra of degree $n$, and assume that $\charact(K)$ does not divide $n$.
For any element $a\in A$ denote by $\tr a$ the trace of the linear operator $L_a$ of left multiplication by $a$ in $A$.
Then $\tr 1=n^2\neq 0\in K$. The argument in \cite{LPR} shows that the quotient group $\PGL_1(A):=A^\times/\G_{m,K}$ is Cayley.

Now assume that $\charact(K)$ divides $n$ and that $4|n$ when $\charact(K)=2$, then again $\PGL_1(A)$ is Cayley, see Theorem \ref{thm:referee}
in Appendix B below.

We see that if  $\charact(K)\neq 2$ or if  $n$ is odd, then the group $\PGL_{n,K}$ is Cayley.
In particular, in arbitrary characteristic the group $\PGL_{3,K}$ is Cayley.
Moreover, if $\charact(K)\neq 2$, then $\PGL_{2,K}$ is Cayley.
On the contrary, if $\charact(K)=2$, then $\PGL_{2,K}$ is not Cayley, see Proposition \ref{prop:referee} in Appendix B below.
\end{example}

\begin{example}[Cf. {\cite[Ex. 1.16]{LPR}}, {\cite[p.~599]{Weil}}]\label{ex3}
Let $A$ be a a finite-dimensional associative $K$-algebra with unit element 1
over a field $K$ of  characteristic $\neq 2$,
and let $\iota$ be an involution (over $K$) of $A$.
Set
$$
G=\{a\in A^\times\mid a^\iota\, a=1\}^0,
$$
where $S^0$ denotes the identity component of an algebraic group $S$.
The Lie algebra of $G$ is the subspace of odd elements of $A$ for $\iota$,
\[ \Lie(G)=\{a\in A\ |\ a^\iota= -a\}. \]
Since $\charact(K)\neq 2$, the formula
\[ a\mapsto (1-a)(1+a)^{-1} \]
defines an equivariant rational map $\lambda\colon G\dashrightarrow \Lie(G)$, and the formula
\[ b\mapsto (1-b)(1+b)^{-1} \]
defines its inverse $\lambda^{-1}\colon \Lie(G)\dashrightarrow G$.
Thus $\lambda$ is a Cayley map and $G$ is a Cayley group over $K$.

We see that if $L/K$ is a separable quadratic extension, then the group $\U_{n,L/K}$ of $n\times n$ unitary matrices in $M_n(L)$ is Cayley over $K$;
that the  group $\Sp_{2n,K}$ is Cayley over $K$, in particular, $\SL_{2,K}\simeq\Sp_{2,K}$ is Cayley;
that the group $\SO(m,n)$ is Cayley over $K$, in particular, the groups $\PGL_{2,K}\simeq\SO(2,1)$
and $\Sp_{4,K}/\mu_{2,K}\simeq\SO(3,2)$ are Cayley, where $\mu_{2,K}=\{\pm 1\}$ is the group of roots of unity of order dividing 2 in $K$.
Here we write $\SO(m,n)$ for the special orthogonal group over $K$ of the diagonal quadratic form $x_1^2+\dots+x_m^2-x_{m+1}^2-\dots-x_{m+n}^2$.
\end{example}

We state some known properties of Cayley groups.

\begin{remark}
\label{rem2.1}
If $G_1$ and $G_2$ are Cayley $K$-groups over an arbitrary field $K$, then evidently $G_1\times_K G_2$ is a Cayley $K$-group.
\end{remark}

\begin{remark}\label{rem2.2}
If $L/K$ is a finite separable field extension and $H$ is a Cayley $L$-group,
then evidently the Weil restriction $R_{L/K} H$ is a Cayley $K$-group.
\end{remark}

\begin{remark}\label{rem2.3}
If $G$ is a Cayley $K$-group over an arbitrary field $K$, and $L/K$ is an arbitrary field extension,
then $G\times_K L$ is evidently  a Cayley $L$-group.
\end{remark}

\begin{proposition}[{\cite[Lemma 5.4(c)]{BKLR}}]\label{rem2.4}
If $G$ is a Cayley $K$-group over an arbitrary field $K$, then all the inner forms of $G$ are Cayley.
In particular, if all the automorphisms of $G$ are inner,
then all the twisted forms of $G$ are inner forms, hence they all are Cayley $K$-groups.
\end{proposition}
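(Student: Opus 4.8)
The plan is to twist the given Cayley map by the cocycle defining the inner form and to descend it to the base field, exploiting the fact that the Cayley equivariance is taken with respect to \emph{conjugation} on $G$ and the \emph{adjoint} action on $\Lie(G)$ --- which is exactly how inner automorphisms act on $G$ and on $\Lie(G)$ respectively.

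Fix a separable closure $K_s$ of $K$ and put $\Gamma=\Gal(K_s/K)$. An inner form of $G$ is a twist ${}_\xi G$ by a cocycle $\xi=(\xi_\sigma)_{\sigma\in\Gamma}$ valued in $\Inn(G)(K_s)=G^{\ad}(K_s)$. Since conjugation and the adjoint representation both factor through $G^{\ad}$, the maps $\Int(\xi_\sigma)$ on $G_{K_s}$ and $\Ad(\xi_\sigma)$ on $\Lie(G)_{K_s}$ are well defined, and the twisted $\Gamma$-actions are $\sigma\cdot x=\Int(\xi_\sigma)(\lsig x)$ on ${}_\xi G$ and $\sigma\cdot v=\Ad(\xi_\sigma)(\lsig v)$ on $\Lie({}_\xi G)$, the latter being the adjoint twist $\Lie({}_\xi G)={}_\xi(\Lie G)$.

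First I would base-change a Cayley map $\varphi\colon G\birat\Lie(G)$ to a $K_s$-birational isomorphism. Being defined over $K$, it commutes with the untwisted $\Gamma$-action, i.e.\ $\varphi(\lsig x)=\lsig\varphi(x)$; being Cayley, it satisfies $\varphi(\Int(g)\,x)=\Ad(g)\,\varphi(x)$. The key step is then the one-line verification that $\varphi$ is equivariant for the \emph{twisted} actions: for $\sigma\in\Gamma$ and a generic point $x$,
\[
\varphi\bigl(\Int(\xi_\sigma)(\lsig x)\bigr)
=\Ad(\xi_\sigma)\,\varphi(\lsig x)
=\Ad(\xi_\sigma)\bigl(\lsig\varphi(x)\bigr),
\]
so that $\varphi(\sigma\cdot x)=\sigma\cdot\varphi(x)$. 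Here the cocycle is absorbed precisely because conjugation on the source matches the adjoint action on the target; this is the step where inner-ness (as opposed to outer-ness) is essential.

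By Galois descent for rational maps, this $\Gamma$-equivariant $K_s$-birational isomorphism descends to a $K$-birational isomorphism ${}_\xi G\birat\Lie({}_\xi G)$, and its equivariance with respect to conjugation on ${}_\xi G$ and the adjoint action on $\Lie({}_\xi G)$ is automatic, since over $K_s$ it is the same map and the equivariance is a geometric property. Hence ${}_\xi G$ is Cayley. The final assertion follows at once: if $\Out(G)$ is trivial, then all $K_s$-automorphisms of $G$ are inner, so every twisted $K$-form is an inner form and therefore Cayley. The only genuinely technical points are the descent formalism for not-everywhere-defined maps and the identification $\Lie({}_\xi G)={}_\xi(\Lie G)$; both are routine, and the substance of the argument is the displayed cancellation.
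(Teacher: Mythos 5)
Your argument is correct and is essentially the proof of the cited result \cite[Lemma 5.4]{BKLR}, which the paper invokes without reproducing: one twists the Cayley map by a cocycle valued in $\Inn(G)$, absorbs the cocycle using the conjugation/adjoint equivariance, and descends the resulting Galois-equivariant birational isomorphism. No substantive difference from the intended proof.
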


The following lemma is a version of \cite[Lemma 5.4(a)]{BKLR} and can be proved similarly.

\begin{lemma}\label{lem5.4(a)}
Let $G$ be a reductive $K$-group and $M$ be a $K$-group, not necessarily connected, acting on $G$, over a field $K$ of characteristic 0.
Consider the induced action of $M$ on $\Lie(G)$.
Let $L/K$ be a Galois extension, and $c\colon \Gal(L/K)\to M(L)$ be a cocycle.
Assume that there exists an $M$-equivariant birational isomorphism $f\colon G\dasheq\Lie(G)$ over $K$.
Then there exists a $_c M$-equivariant birational isomorphism of the twisted varieties
$_c f \colon _c G\dasheq \Lie(_c G)$, where $_c M$ is the twisted group.
\end{lemma}

\begin{proposition}[{\cite[Corollary 6.5]{BKLR}}]\label{prop:cor6.5}
Let $G$ be a reductive $K$-group over a field $K$ of characteristic 0, and let $T\subset G$ be a maximal $K$-torus.
Then $G$ is Cayley if and only if there exists a $W(G,T)$-equivariant birational isomorphism $T\stackrel{\simeq}{\dasharrow} \Lie(T)$
defined over $K$, where the Weyl group $W(G,T)$ is viewed as an algebraic $K$-group.
\end{proposition}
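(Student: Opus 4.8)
The plan is to reduce the \emph{global} problem on $G$ to the \emph{toral} problem by exhibiting $T$ and $\mathfrak{t}:=\Lie(T)$ as \emph{relative sections} (in the sense of Popov and Vinberg) of two actions of $G$: the conjugation action on $G$ and the adjoint action on $\mathfrak{g}:=\Lie(G)$. Both sections have the same ``Galois group'' $W=W(G,T)=N_G(T)/T$, because conjugation by $T$ on $T$ and $\Ad(T)$ on $\mathfrak{t}$ are trivial. That each is a relative section rests on the classical description of the adjoint quotient (Chevalley): regular semisimple elements are dense, each is conjugate over $\bar K$ into $T$ (resp.\ $\mathfrak{t}$), and meets it in a single $W$-orbit. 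The goal is then a dictionary between $G$-equivariant rational maps $G\dashrightarrow\mathfrak{g}$ and $W$-equivariant rational maps $T\dashrightarrow\mathfrak{t}$ that sends birational isomorphisms to birational isomorphisms.

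Working first over $\bar K$, I would construct the restriction direction of this dictionary. Given a $G$-equivariant dominant rational map $\Phi\colon G\dashrightarrow\mathfrak{g}$ and a generic (hence regular semisimple) $t\in T$, whose centralizer $Z_G(t)$ is exactly $T$, equivariance forces $\Ad(s)\,\Phi(t)=\Phi(sts^{-1})=\Phi(t)$ for all $s\in T$. Thus $\Phi(t)\in\mathfrak{g}^{T}$, and since $\mathrm{char}\,K=0$ the root space decomposition $\mathfrak{g}=\mathfrak{t}\oplus\bigoplus_\alpha\mathfrak{g}_\alpha$ gives $\mathfrak{g}^{T}=\mathfrak{t}$. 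Hence $\Phi$ restricts to $\phi:=\Phi|_T\colon T\dashrightarrow\mathfrak{t}$, and conjugating by representatives $n\in N_G(T)$ shows that $\phi$ is $W$-equivariant.

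Conversely, I would reconstruct $\Phi$ from a $W$-equivariant $\phi\colon T\dashrightarrow\mathfrak{t}$ by enforcing $G$-equivariance: writing a generic element as $g=hth^{-1}$ with $t\in T$, set $\Phi(g):=\Ad(h)\,\phi(t)$. Well-definedness is exactly where the section formalism does its work. If $h_1t_1h_1^{-1}=h_2t_2h_2^{-1}$ with $t_i$ regular semisimple, then $n:=h_2^{-1}h_1$ normalizes $T$ (a regular semisimple element lies in a unique maximal torus), so it represents some $w\in W$ with $t_2=w\!\cdot\! t_1$; the resulting discrepancy is absorbed by the $W$-equivariance of $\phi$, while the residual $Z_G(t_1)=T$ ambiguity in $h_1$ is harmless precisely because $\phi(t_1)\in\mathfrak{t}=\mathfrak{g}^{T}$ is $\Ad(T)$-fixed. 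These two constructions are mutually inverse, and running the same construction for maps $\mathfrak{g}\dashrightarrow G$ (with section $\mathfrak{t}\dashrightarrow T$) shows the dictionary is compatible with composition; therefore $\Phi$ is a birational isomorphism if and only if $\phi$ is. This settles the equivalence over $\bar K$.

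The hard part is descent to the non-closed field $K$: the torus $T$ need not be split, a generic $K$-point of $G$ need not be conjugate into $T$ \emph{over} $K$, and $W$ carries a nontrivial $\Gal(\bar K/K)$-action and must be viewed as an algebraic $K$-group. I would therefore phrase the entire correspondence at the level of generic points and function fields rather than $K$-points, and verify that restriction and reconstruction are $\Gal(\bar K/K)$-equivariant. In one direction this is automatic, since $T$ and $\mathfrak{t}$ are $K$-subvarieties, so $\phi=\Phi|_T$ is $K$-defined whenever $\Phi$ is; in the other, the Galois-invariance of the reconstruction shows that a $K$-defined $\phi$ that is equivariant for $W$ \emph{as a $K$-group} extends to a $K$-defined $\Phi$. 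Matching this Galois-equivariant dictionary with the definition of a Cayley group then yields the stated equivalence; this is exactly the content recorded in \cite[Corollary 6.5]{BKLR}, deduced there from the relative-section machinery developed in that section.
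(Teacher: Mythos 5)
First, a point of reference: the paper does \emph{not} prove this proposition. It is imported verbatim from \cite[Corollary 6.5]{BKLR}, and the sentence immediately following the statement warns that the proof is ``difficult'' and relies on \cite{CKPR}. So your proposal can only be compared with the strategy of the cited source. That strategy is indeed the one you outline --- realize $(T,W)$ and $(\Lie(T),W)$ as relative sections of the conjugation and adjoint actions and transport equivariant rational maps across them --- and your restriction direction over $\bar K$ is correct: for generic $t\in T$ one has $\Phi(t)\in\ggg^{T}=\ttt$ by the root-space decomposition, and conjugation by $N_G(T)$ gives $W$-equivariance.

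The genuine gap is in the reconstruction and in the descent to $K$, which is exactly the part that makes the result hard. Your formula $\Phi(g):=\Ad(h)\,\phi(t)$ for $g=hth^{-1}$ (i) defines $\Phi$ only pointwise over $\bar K$ on the locus of elements conjugate into $T$, and you never show that it is a \emph{rational map}, i.e.\ a morphism on a dense open subset; and (ii) has no meaning over $K$, because the generic point of $G$ over the function field $K(G)$, although regular semisimple, need not be conjugate into $T$ by an element of $G(K(G))$ --- maximal tori are conjugate only over $\bar K$. Declaring that you would ``phrase the correspondence at the level of function fields and verify Galois-equivariance'' begs the question: the choices $h$ and $t$ in your formula are not individually Galois-stable, so equivariance of the output is not visible from the construction. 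The two standard repairs are: (a) establish $K$-defined $G$-equivariant birational isomorphisms from $G$ and from $\Lie(G)$ to the homogeneous fiber spaces $(G\times T)/N$ and $(G\times\Lie(T))/N$ over $G/N$ with $N=N_G(T)$, so that $\phi$ induces a $K$-rational fiberwise map, and then show (by your stabilizer argument run at the geometric generic point) that every $G$-equivariant rational map between these fiber spaces generically respects the projection to $G/N$ --- this is what \cite{BKLR} and \cite{CKPR} actually do; or (b) prove that the $G$-equivariant extension of $\phi$ over $\bar K$ is \emph{unique} (because $G\cdot T$ is dense in $G$), whence $\leftexp{\gamma}{\Phi}=\Phi$ for all $\gamma\in\Gal(\bar K/K)$ and $\Phi$ descends. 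You invoke neither mechanism concretely, and without one of them the implication ``$K$-defined $W$-equivariant birational $T\stackrel{\simeq}{\dashrightarrow}\Lie(T)$ implies $G$ is Cayley'' remains unproved; the remaining details (mutual inverseness, compatibility with composition, and the meaning of $W$-equivariance when $W$ is a non-constant finite $K$-group scheme) are asserted rather than argued.
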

Note that the proof of this (difficult) result uses \cite{CKPR}, where it is assumed that $\charact(K)=0$.

\section{Proof of Theorem \ref{thm:maintheorem}, easy cases}\label{sec:3}

We start proving Theorem \ref{thm:maintheorem} case by case.

\begin{proposition}\label{pr1}
Any connected reductive $K$-group $G$ of (absolute) rank $1$ over a field $K$ of characteristic $\neq 2,3$ is a Cayley group.
\end{proposition}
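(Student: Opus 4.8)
The plan is to reduce to a short list of cases using the structure theory of reductive groups, and then to dispatch each case by the examples and closure properties of Section~\ref{sec:2}. First I would pin down the classification. Writing $d=\dim Z(G)^0$ for the dimension of the central torus of $G$ and $r$ for the semisimple rank of $[G,G]$, the absolute rank equals $d+r=1$, so exactly one of two situations occurs: either $d=1,\ r=0$, whence $[G,G]$ is trivial and $G=Z(G)^0$ is a one-dimensional $K$-torus; or $d=0,\ r=1$, whence $G=[G,G]$ is semisimple of absolute rank $1$. In the latter case $G$ cannot be a nontrivial almost-direct product (each factor would contribute rank $\ge1$), and a $K$-simple group of absolute rank $1$ must be absolutely simple of type $A_1$ (a Weil restriction $R_{L/K}$ would multiply the absolute rank by $[L:K]$, forcing $L=K$). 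Since the fundamental group of $A_1$ is $\Z/2\Z$ and its Dynkin diagram has no nontrivial symmetry, $G$ is a $K$-form of $\SL_{2,K}$ (simply connected) or of $\PGL_{2,K}$ (adjoint), and in both cases $\Out(G)=1$.

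For the torus case, a one-dimensional $K$-torus $T$ is classified by a continuous homomorphism $\Gal(K^{\mathrm{sep}}/K)\to\Aut(\Z)=\{\pm1\}$. If this homomorphism is trivial then $T\cong\G_{m,K}=\GL_{1,K}$, which is Cayley by Example~\ref{ex2.1}. Otherwise $T$ is the norm-one torus $\U_{1,L/K}$ of the corresponding quadratic extension $L/K$, which is separable because $\charact K\neq2$; taking $A=L$ with $\iota$ the nontrivial $K$-automorphism of $L$ in Example~\ref{ex3} shows that $T$ is Cayley.

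For the semisimple case I would first record that the two split groups are Cayley: $\SL_{2,K}\cong\Sp_{2,K}$ by Example~\ref{ex3}, and $\PGL_{2,K}$ by Example~\ref{ex2} (this is where $\charact K\neq2$ enters). Because $\Out=1$ for each, every $K$-form of $\SL_{2,K}$ and every $K$-form of $\PGL_{2,K}$ is an \emph{inner} form; hence Proposition~\ref{rem2.4} shows that all of these forms are Cayley.

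The only genuine content, and the step to get right, is the classification: ruling out any ``mixed'' rank-$1$ group, confirming that $\SL_2$ and $\PGL_2$ exhaust the isogeny classes of type $A_1$, and checking that every form of each is inner so that Proposition~\ref{rem2.4} applies; one must also recognize the norm-one torus as an instance of Example~\ref{ex3}. All the remaining assertions are direct citations, and inspecting them shows that only $\charact K\neq2$ is actually used---the hypothesis $\charact K\neq3$ is not needed in rank~$1$.
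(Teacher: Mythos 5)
Your proof is correct and, in the semisimple case, follows exactly the paper's route: reduce to forms of $\SL_{2,K}$ and $\PGL_{2,K}$, note both are Cayley by the examples of Section~\ref{sec:2}, and invoke Proposition~\ref{rem2.4} together with the triviality of $\Out$ for type $A_1$. The only divergence is the torus case: the paper simply cites the $K$-rationality of one-dimensional tori (Voskresenskii) and uses that for a torus the equivariance condition is vacuous, whereas you classify the rank-one tori explicitly as $\G_{m,K}$ or a norm-one torus $\U_{1,L/K}$ and appeal to Examples~\ref{ex2.1} and~\ref{ex3}; both are valid, and yours has the minor advantage of being self-contained within the paper's stated examples. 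Your closing observation that only $\charact K\neq 2$ is actually used is consistent with the sources cited.
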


\begin{proof}
 If $G$ is a torus of rank 1, then $G$ is $K$-rational, see e.g. \cite[\S\, 4.9, Example 6]{Voskresenskii},
hence it is Cayley over $K$.
If $G$ is not a torus, then $G$ is simple of rank 1, hence $G$ is a twisted form of one of the groups $\SL_{2,K},\ \PGL_{2,K}$.
Both these groups are Cayley over $K$, see Example \ref{ex3} and Example \ref{ex2}.
Since all the automorphisms of  $\SL_{2,K}$ and $\PGL_{2,K}$ are inner, by Proposition \ref{rem2.4} $G$ is Cayley.
\end{proof}

\begin{proposition}\label{pr2}
Any connected, reductive and not semisimple $K$-group $G$ of absolute rank $2$
over a field $K$  of characteristic $\neq 2,3$  is a Cayley group.
\end{proposition}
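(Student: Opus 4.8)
The plan is to use the structure theory of reductive groups to reduce to a short list of cases, and to realize each resulting $G$ as one of the Cayley groups of Examples \ref{ex2.1}--\ref{ex3}. Since $G$ is reductive of absolute rank $2$ but not semisimple, its radical $Z:=R(G)^0$ is a central $K$-torus of rank $\ge 1$, its derived group $G':=[G,G]$ is semisimple of absolute rank $2-\dim Z\le 1$, and $G=Z\cdot G'$ is an almost direct product. If $\dim Z=2$ then $G'=1$ and $G=Z$ is a $K$-torus of rank $2$; such a torus is $K$-rational (all tori of dimension $\le 2$ are rational, cf.\ \cite{Voskresenskii}), and for a torus, where the conjugation and adjoint actions are trivial, a Cayley map is merely a $K$-birational isomorphism $T\birat\Lie(T)\cong\A^2$, so $G$ is Cayley. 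If $\dim Z=1$ then $G'$ is an absolutely simple $K$-group of type $A_1$, hence $G'\cong\SL_1(Q)$ or $G'\cong\PGL_1(Q)$ for a quaternion $K$-algebra $Q$.

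I would then split the rank-$1$ case according to $\mu:=Z\cap G'$, a central subgroup of $G'$. When $G'=\PGL_1(Q)$ is adjoint, $\mu=1$ and $G=Z\times G'$; here $Z$ is a rank-$1$ torus, hence $K$-rational and Cayley, while $G'\cong Q^\times/\G_m$ is Cayley by Example \ref{ex2} (recall $\charact K\neq 2$), so $G$ is Cayley by Remark \ref{rem2.1}. Likewise, if $G'=\SL_1(Q)$ and $\mu=1$, then $G=Z\times\SL_1(Q)$ is a product of Cayley groups, since $\SL_1(Q)=\{a\in Q^\times\mid \gamma(a)\,a=1\}$ is Cayley by Example \ref{ex3}, with $\gamma$ the canonical involution of $Q$; hence $G$ is Cayley.

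The one genuinely nontrivial case is $G'=\SL_1(Q)$ with $\mu=\mu_2$, that is, $G=(Z\times\SL_1(Q))/\mu_2$. A rank-$1$ $K$-torus has the form $Z=R^{(1)}_{L/K}\G_m$ for a quadratic \'etale $K$-algebra $L$ (the split torus $\G_m$ being recovered from $L=K\times K$). I would realize $G$ uniformly as a unitary group: put $B:=Q\otimes_K L$, a degree-$2$ algebra over $L$, and equip it with the involution $\iota:=\gamma\otimes\sigma$ of the second kind, $\sigma$ being the nontrivial $K$-automorphism of $L$. Then $U:=\{b\in B^\times\mid \iota(b)\,b=1\}$ is connected and is Cayley by Example \ref{ex3}. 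It remains to identify $U$ with $G$: its center is $\{z\in L^\times\mid \sigma(z)z=1\}=R^{(1)}_{L/K}\G_m=Z$, and the embedding $Q^\times\into B^\times$, $a\mapsto a\otimes1$, restricts to an isomorphism $\SL_1(Q)\isoto\SU(B,\iota)$ of derived groups, since $\iota(a\otimes1)(a\otimes1)=\Nrd_Q(a)\otimes1$ and both groups are connected of type $A_1$. As $Z\cap\SU(B,\iota)=\mu_2$ and the embedding $\mu_2\into Z$ is the (unique) inclusion of the $2$-torsion, this yields $G\cong U$. For $L=K\times K$ this specializes to $G\cong Q^\times=\GL_1(Q)$, already Cayley by Example \ref{ex2.1}.

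I expect the main obstacle to be exactly this last identification: checking, over a general field $K$ of characteristic $\neq 2,3$, that $(Z\times\SL_1(Q))/\mu_2$ with $Z$ anisotropic is a unitary group of the second kind, and in particular that $\SU(Q\otimes_K L,\gamma\otimes\sigma)\cong\SL_1(Q)$ for \emph{every} quaternion algebra $Q$, not only those split by $L$. Everything else---the torus case and the two direct-product cases---follows directly from the rationality of low-dimensional tori together with Examples \ref{ex2.1}--\ref{ex3} and Remark \ref{rem2.1}. (As a consistency check one could instead invoke the Weyl-equivariant criterion of Proposition \ref{prop:cor6.5}, where $W(G,T)\cong\Z/2$ acts trivially on $Z$ and by inversion in the coroot direction, but the explicit realizations above are more economical.)
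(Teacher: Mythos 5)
Your proof is correct, and its skeleton coincides with the paper's: the rank-$2$ torus case, the case where $G$ is a direct product of a rank-$1$ torus and a rank-$1$ semisimple group, and the remaining case where the radical meets the derived group in $\mu_2$. The divergence is in how that last case is handled. The paper notes that such a $G$ is a $K$-form of $\GL_2$, classifies the forms of $\GL_2$ \emph{up to inner twisting} via the bijection $[G]\mapsto[G^{\tor}]$ with forms of $\G_m$ (yielding $\GL_{2,K}$ and the quasi-split unitary groups $\U_{2,L/K}$), and then invokes Proposition \ref{rem2.4} (inner forms of Cayley groups are Cayley, a nontrivial result from \cite{BKLR}) to sweep up all inner twists. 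You instead realize \emph{every} such $G$ explicitly as $\GL_1(Q)$ or as $\U(Q\otimes_K L,\gamma\otimes\sigma)$, which fall directly under Examples \ref{ex2.1} and \ref{ex3}; this bypasses Proposition \ref{rem2.4} in this case, at the cost of the identifications $\SU(Q\otimes_K L,\gamma\otimes\sigma)\cong\SL_1(Q)$ and $(Z\times\SL_1(Q))/\mu_2\cong\U(Q\otimes_K L,\gamma\otimes\sigma)$. Your argument for these (a closed immersion of connected $3$-dimensional groups of type $A_1$ is an isomorphism, and the two central $\mu_2$-quotients match) is sound --- it is the standard fact that every degree-$2$ algebra with unitary involution is $(Q\otimes_K L,\gamma\otimes\sigma)$ for a distinguished quaternion subalgebra $Q$, cf.\ \cite[\S\,2]{KMRT} --- so there is no gap. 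Your route is more explicit and self-contained for this proposition; the paper's is shorter because it outsources the case analysis to the inner-twisting lemma, which it needs elsewhere anyway.
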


\begin{proof}
If $G$ is a torus of rank 2, then $G$ is $K$-rational,
see  \cite[\S\, 4.9, Example 7]{Voskresenskii}, hence it is Cayley over $K$.
If $G$ is not a torus, denote by $R:=Z(G)^0$ its radical and by $G^\der:=[G,G]$ its commutator subgroup.
Since $G$ is not a torus and not semisimple, $R$ is a one-dimensional torus and $G^\der$ is a simple group of absolute rank 1.
Set $\mu=R\cap G^\der$.
The multiplication in $G$ gives a canonical epimorphism
$\pi\colon R\times_K G^\der\onto G$
with kernel isomorphic to $\mu$.

If this epimorphism is an isomorphism, then $G$ is isomorphic to the product of  two $K$-groups $R$ and $G^\der$ of absolute rank 1.
By Proposition \ref{pr1}, $R$ and $G^\der$ are Cayley over $K$, hence by Remark \ref{rem2.1} $G$ is Cayley.

If the epimorphism  $\pi\colon R\times_K G^\der\onto G$ is not an isomorphism, then $\mu\neq \{1\}$.
It follows that the center $Z(G^\der)\neq 1$, hence the simple group $G^\der$ of absolute rank 1 is not adjoint, hence it is simply connected.
We see that $G^\der _\Kbar\simeq\SL_{2,\Kbar}$, and $\mu_\Kbar=Z(G_\Kbar)=\mu_{2,\Kbar}=\{\pm 1\}$,
where $\Kbar$ is an algebraic closure of $K$.
Thus $G_\Kbar=(\G_{m,\Kbar}\times_\Kbar\, \SL_{2,\Kbar})/\mu_{2,\Kbar}\simeq \GL_{2,\Kbar}$
(here $\mu_{2,\Kbar}$ is embedded diagonally).
This means that $G$ is a $K$-form of $\GL_2$.
By Lemma \ref{lem:GL2} below all the $K$-forms of $\GL_2$ are Cayley and hence, $G$ is Cayley.
\end{proof}

\begin{lemma}\label{lem:GL2}
 Any $K$-form of $\GL_2$ over a field $K$  of characteristic $\neq 2,3$  is a Cayley group.
\end{lemma}

\begin{proof}
Write $\Out(G):=\Aut(G)/\Inn(G)$ for ``the group of outer automorphisms'' of $G$.
Write $G^\tor:=G/G^\der$.
The canonical homomorphism
$$\Aut(G)\to\Aut(G^\der)\times\Aut(G^\tor)$$
gives for $G=\GL_2$ a canonical isomorphism
$$
\Aut(\GL_2)\isoto\Aut(\SL_2)\times\Aut(\G_m).
$$
Since all the elements of $\Aut(\SL_2)$ are {\em inner} automorphisms,
we obtain a canonical isomorphism
$$
\Out(\GL_2)\isoto\Aut(\G_m).
$$
taking the class of an automorphism of $\GL_2$ to the induced automorphism of $(\GL_2)^\tor=\G_m$.
Thus we obtain a bijection of the set of $K$-forms of $\GL_{2,K}$ up to inner twisting
onto the set of $K$-forms of $\G_{m,K}$ up to an isomorphism.
One can easily see that this bijection takes $[\GL_{2,K}]$ to  $[\G_{m,K}]$ and  $[\U_{2,L/K}]$ to $[\U_{1,L/K}]$,
where $L$ runs over the separable quadratic extensions of $K$
and we denote by $[\ ]$ the corresponding equivalence classes.
Since the $K$-groups $\G_{m,K}$ and $\U_{1,L/K}$ are {\em all} the $K$-forms of $\G_m$ up to an isomorphism,
we see that $\GL_{2,K}$ and $\U_{2,L/K}$ are all the $K$-forms of $\GL_2$ up to inner twisting.
Since all these $K$-groups, $\GL_{2,K}$ and $\U_{2,L/K}$, are Cayley, see Examples \ref{ex2.1} and \ref{ex3},
we conclude, using Proposition \ref{rem2.4}, that all the $K$-forms of $\GL_2$ are Cayley.
\end{proof}

\begin{proposition}\label{pr3}
Any connected semisimple  $K$-group $G$ of absolute rank $2$ of type $\AA_1\times \AA_1$
over a field $K$  of characteristic $\neq 2,3$  is a Cayley group.
\end{proposition}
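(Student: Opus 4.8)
The plan is to classify all connected semisimple $K$-groups of type $\AA_1\times\AA_1$ up to isomorphism, and then verify directly that each one is Cayley, using the tools assembled in Section \ref{sec:2}. A group of type $\AA_1\times\AA_1$ has simply connected cover $\SL_{2}\times\SL_{2}$ over a separable closure. The first step is to understand the action of the absolute Galois group on the root datum (equivalently, on the Dynkin diagram): the automorphism group of the diagram $\AA_1\times\AA_1$ is $\frakS_2$, interchanging the two factors. Thus a form $G$ is either \emph{split} (the two $\AA_1$-factors are Galois-stable individually) or involves a separable quadratic extension $L/K$ that permutes them. In the latter case $G$ is obtained from a rank-$1$ group of type $\AA_1$ over $L$ by Weil restriction $R_{L/K}$.

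\medskip\noindent\textbf{Key steps.}
First I would treat the forms where each $\AA_1$-factor is individually defined over $K$. Here $G$ is isogenous to a product $G_1\times_K G_2$ of two rank-$1$ semisimple $K$-groups, each a $K$-form of $\SL_2$ or $\PGL_2$. By Proposition \ref{pr1} each factor is Cayley, and by Remark \ref{rem2.1} the product is Cayley; one must only check that passing to the relevant isogeny quotient (the actual group $G$, which may be $\SL_2\times\SL_2$, $\SL_2\times\PGL_2$, $\PGL_2\times\PGL_2$, or $\gSO_4$-type quotients such as $(\SL_2\times\SL_2)/\mu_2^{\mathrm{diag}}$) does not destroy the Cayley property. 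The latter groups are $K$-forms of $\gSO_4$ and $\gPSO_4$, which I would match against the orthogonal groups $\SO(m,n)$ of Example \ref{ex3}, or handle via Proposition \ref{rem2.4} since their outer automorphisms and inner twists are controlled. Second, I would treat the forms where Galois acts transitively on the two factors: then $G \cong R_{L/K}H$ for a rank-$1$ group $H$ of type $\AA_1$ over the quadratic extension $L$. By Proposition \ref{pr1}, $H$ is Cayley over $L$, and by Remark \ref{rem2.2} the Weil restriction $R_{L/K}H$ is Cayley over $K$. This disposes of the ``mixed'' forms cleanly.

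\medskip\noindent\textbf{Main obstacle.}
The routine part is the Weil-restriction case and the direct-product case; the genuine care is needed in the bookkeeping of the isogeny quotients between $\SL_2\times\SL_2$ and its adjoint form $\gPSO_4$. The subtlety is that being Cayley is not in general inherited under isogeny, so I cannot simply quote that a central quotient of a Cayley group is Cayley. The hard part will be ensuring that every isomorphism class of semisimple rank-$2$ group of type $\AA_1\times\AA_1$—including the various central quotients and their $K$-forms—has been identified with one of the explicitly Cayley groups of Example \ref{ex3} (the groups $\SO(m,n)$, $\Sp_{4}/\mu_2\simeq\SO(3,2)$, $\U_{2,L/K}$-related constructions) or falls under an inner-twisting argument via Proposition \ref{rem2.4}. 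I expect the proof to proceed by enumerating these classes and exhibiting for each a Cayley structure from the earlier examples, reducing the whole proposition to a finite check.
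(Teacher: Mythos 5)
Your overall strategy is the paper's: separate the direct products over $K$, the Weil restrictions from a quadratic extension, and the $\SO_4$-type central quotients, disposing of the last via Example \ref{ex3} and inner twisting (Proposition \ref{rem2.4}). However, there is a genuine gap in your case division. You assert that whenever the Galois group permutes the two $\AA_1$-factors, $G\cong R_{L/K}H$ for some rank-$1$ group $H$ over $L$. This is false. The simply connected cover of such a $G$ is $R_{L/K}\SL_1(D)$ for a quaternion $L$-algebra $D$, with center $R_{L/K}\mu_{2,L}$; besides the trivial subgroup and the full center, this center contains the diagonally embedded $\mu_{2,K}$, and the quotient by it is a form of $\SO_4$ that is neither a Weil restriction nor a product of two $K$-groups. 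A concrete instance over $\R$ is $\SO(3,1)$: the form $x_1^2+x_2^2+x_3^2-x_4^2$ has non-square discriminant, so complex conjugation swaps the two $\AA_1$-factors, yet $\SO(3,1)$ sits strictly between $\gSpin(3,1)=R_{\C/\R}\SL_{2,\C}$ and $R_{\C/\R}\PGL_{2,\C}$ and equals neither. Such groups fall through both of your cases as you have stated them: they are not covered by your ``Galois-stable factors'' case (the factors are swapped) nor by your ``Weil restriction'' case.

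The repair is exactly how the paper organizes the third case: instead of conditioning on whether Galois permutes the factors, condition on whether the almost direct product decomposition of $G$ into its two $\AA_1$-factors is direct. If it is direct, $G$ is either a product of two rank-$1$ $K$-groups or a Weil restriction, and Proposition \ref{pr1} with Remarks \ref{rem2.1} and \ref{rem2.2} applies — this part of your argument is correct. If it is not direct, then regardless of the Galois action on the factors, $G$ is a twisted form of $\SO_4$, hence an inner form of $\SO(K^4,q)$ for a suitable nondegenerate quadratic form $q$ (every outer class, i.e.\ every discriminant, is realized by such a $q$), and Example \ref{ex3} together with Proposition \ref{rem2.4} finishes the proof. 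Your own discussion of $(\SL_2\times\SL_2)/\mu_2^{\mathrm{diag}}$ shows you have the right tools; the error is only that you applied them solely in the Galois-stable situation, so the enumeration you flag as ``the hard part'' is in fact incomplete as written.
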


\begin{proof}
In this case the group $G$ decomposes into an almost direct product of two groups of type $\AA_1$  defined either over $K$ or
over a separable quadratic extension $L$ of $K$.
If this almost direct product is direct, then $G$ is either a direct product of two simple  $K$-groups of type $\AA_1$,
and hence is Cayley by Proposition \ref{pr1} and Remark \ref{rem2.1}, or $G$ is of the form $R_{L/K} G'$, where $G'$ is a simple $L$-group of type $\AA_1$,
and we conclude by Proposition \ref{pr1}  that $G'$ is Cayley over $L$, and conclude by Remark \ref{rem2.2} that $G$ is Cayley over $K$.
If this almost direct product is not direct, then $G$ is a twisted form of $\SO_4$,
hence $G$ is an inner form of a special orthogonal group of the form $\SO(K^4, q)$
for some nondegenerate quadratic form $q$ in 4 variables, and $G$ is Cayley by Example \ref{ex3} and Proposition \ref{rem2.4}.
\end{proof}

\begin{proposition}\label{pr4}
Any connected simple  $K$-group $G$ of absolute rank $2$ of type $\BB_2=\CC_2$
over a field $K$  of characteristic $\neq 2,3$ is a Cayley group.
\end{proposition}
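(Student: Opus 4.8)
The plan is to classify all connected simple $K$-groups of type $\BB_2=\CC_2$ up to the twisting that preserves the Cayley property, and to show each representative group that arises is Cayley by exhibiting it (or an inner form of it) among the explicit examples in Section \ref{sec:2}. First I would observe that a simple group of type $\BB_2=\CC_2$ has no outer automorphisms: the Dynkin diagram of type $\BB_2$ has trivial automorphism group, so $\Out(G)$ is trivial for any such $G$. Consequently, by Proposition \ref{rem2.4}, all twisted forms of a given split group of this type are in fact \emph{inner} forms, and it suffices to produce a single Cayley $K$-group in each isogeny class and invoke Proposition \ref{rem2.4} to deduce that every $K$-form is Cayley.

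Next I would run through the isogeny classes of type $\BB_2=\CC_2$. Up to central isogeny there are three simple groups over $\Kbar$: the simply connected form $\Sp_4$ (equivalently $\gSpin_5$), the adjoint form $\PGL_2$-type... more precisely the adjoint $\gPSO_5\simeq\Sp_4/\mu_2$, and the intermediate form $\gSO_5$ when $\charact(K)\neq 2$. For the simply connected form I would invoke Example \ref{ex3}, which states that $\Sp_{4,K}$ is Cayley over $K$. For the adjoint form I would use the isomorphism $\Sp_{4,K}/\mu_{2,K}\simeq\SO(3,2)$ recorded in Example \ref{ex3}, together with the fact that $\SO(m,n)$ is Cayley by that same example; more generally every $\gSO(K^5,q)$ for a nondegenerate quadratic form $q$ in $5$ variables is Cayley by Example \ref{ex3}. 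This covers both the adjoint and intermediate isogeny types, since these are precisely the special orthogonal groups (and their quotients) of $5$-dimensional quadratic spaces.

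Having exhibited a Cayley representative in each isogeny class, I would conclude as follows. Let $G$ be an arbitrary connected simple $K$-group of type $\BB_2=\CC_2$. Then $G$ is a twisted form of one of the split groups $\Sp_{4,K}$, $\gSO_{5,K}$, or $\gPSO_{5,K}$. Since these groups admit only inner automorphisms, $G$ is an inner form of the corresponding split group, and each split group is Cayley by the above. By Proposition \ref{rem2.4} every inner form of a Cayley $K$-group is Cayley, so $G$ is Cayley.

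The main obstacle I anticipate is purely bookkeeping rather than substantive: one must be careful to enumerate \emph{all} the isogeny types of type $\BB_2=\CC_2$ (simply connected, adjoint, and—in characteristic $\neq2$—the intermediate special orthogonal form) and to match each to an explicit Cayley group from Example \ref{ex3}, verifying that the low-rank exceptional isomorphisms ($\Sp_4\simeq\gSpin_5$ and $\Sp_4/\mu_2\simeq\SO(3,2)$, and more generally $\gSO_5$ as the orthogonal group of a $5$-dimensional form) are valid over a general field $K$ of characteristic $\neq 2,3$. The triviality of $\Out(G)$ is what makes the argument clean, since it collapses the distinction between inner and outer forms and lets Proposition \ref{rem2.4} do all the work of descending from the split representative to an arbitrary $K$-form.
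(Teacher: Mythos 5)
Your proposal is correct and is essentially the paper's own proof: the paper likewise notes that $G$ is an inner twisted form of $\Sp_{4,K}$ or $\Sp_{4,K}/\mu_{2,K}$, both Cayley by Example \ref{ex3}, and concludes via Proposition \ref{rem2.4}. The only (harmless) slip is your ``intermediate form'': for type $\BB_2=\CC_2$ the center of the simply connected group is $\mu_2$, so there are just two isogeny classes, and $\gSO_5\simeq\Sp_4/\mu_2$ is already the adjoint group rather than a third class.
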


\begin{proof}
In this case $G$ is an (inner) twisted form of one of the $K$-groups $\Sp_{4,K}$ and $\Sp_{4,K}/\mu_{2,K}$.
Both these groups are Cayley by Example \ref{ex3}, and using Proposition \ref{rem2.4}, we conclude that $G$ is Cayley.
\end{proof}

\begin{subsec} {\em Proof of Theorem \ref{thm:maintheorem} modulo Theorem \ref{thm:su3} and results of Appendix A.}

The cases when $G$ is not a simple group of type $\GG_2$ or  $\AA_2$ were treated in
 Propositions \ref{pr1}, \ref{pr2}, \ref{pr3}, and  \ref{pr4}.

Any connected simple  $K$-group  of absolute rank $2$ of type $\GG_2$ over a field $K$ of characteristic $0$
is not Cayley, see \cite[\S\,9.2]{LPR} and Iskovskikh's papers \cite{Isk2}, \cite{Isk3} (this was explained in our Introduction).

Let $G$ be a connected simple  $\R$-group of rank 2 of type $\AA_2$. We consider all the possible cases.

The group $\PGL_{3,\R}$ is Cayley by Example \ref{ex2}. The group $\SU_3$ is Cayley
by Corollary \ref{cor:su3}  of Theorem \ref{thm:su3}, see  also  Theorem \ref{thm:su3-Igor} of Appendix A.
Since the group $\SU(2,1)$ is an inner form of $\SU_3$, by  Proposition \ref{rem2.4} it is Cayley as well.

The group $\SL_{3,\R}$ is not Cayley by Theorem \ref{thm:SL3} of Appendix A.
The group $\PGU_3$ is not Cayley by Theorem \ref{thm:PGU3} of Appendix A.
Since the group $\PGU(2,1)$ is an inner form of $\PGU_3$,  by  Proposition \ref{rem2.4} it is not Cayley either.
\qed
\end{subsec}

\section{The group $\SU_3$}\label{sec:4}

\begin{subsec}\label{subsec:W-Gamma}
Let $W$ be a finite group. Let $L/K$ be a finite Galois extension with Galois group $\Gamma=\Gal(L/K)$.
We shall consider $W$-varieties defined over $K$ and $(W,\Gamma)$-varieties defined over $L$.
By a $W$-variety defined over $K$ we mean a $K$-variety $X$ with a $W$-action
$W\to\Aut(X)$.
By a semilinear  action of $\Gamma$ on an $L$-variety $Y$
we mean a homomorphism $\rho\colon \Gamma\to\SAut_{L/K}(Y)$ into the group $\SAut_{L/K}(Y)$
of $L/K$-semilinear automorphisms of $Y$,
such that $\rho(\gamma)$ is a $\gamma$-semilinear automorphism of $Y$ for any $\gamma\in\Gamma$
(see \cite[\S\,1.1]{Borovoi-Duke} and \cite[\S\,1.2]{FSS} for definitions of semilinear automorphisms).
By a $(W,\Gamma)$-variety defined over $L$
we mean an $L$-variety $Y$ with two commuting actions:
an $L$-action of $W$ and a semilinear action of $\Gamma$.
One defines morphisms and rational maps of $(W,\Gamma)$-varieties.
We have a base change functor $X\mapsto X\times_K L$ from the category of $W$-varieties over $K$
to the category of $(W,\Gamma)$-varieties over $L$,
and it is well known that this functor is fully faithful,
i.e., the natural  map
$$
\Hom_{\,W}(X,X')\to \Hom_{(W,\Gamma)}(X\times_K L, X'\times_K L)
$$
is bijective for any two $W$-varieties $X,X'$ defined over $K$.
Similarly, $W$-varieties $X$ and $X'$ over $K$ are $W$-equivariantly birationally isomorphic over $K$
if and only if $X\times_K L$ and $X'\times_K L$ are
$(W,\Gamma)$-equivariantly birationally isomorphic over $L$.
Note that, by Galois descent (see Serre \cite[Ch.~V.20, Cor.~2 of Prop.~12]{Serre2}),
any {\em quasi-projective} $(W,\Gamma)$-variety over $L$
comes from a $W$-variety over $K$; we shall not use this fact, however.
\end{subsec}

\begin{subsec}\label{subsec:K-L}
Let $K$ be a field of characteristic 0.
Assume that $K$ does not contain non-trivial roots of unity of order 3.
Set  $L=K(\zeta)$, where $\zeta^3=1,\ \zeta\neq 1$.
We can also write $L=K(\sqrt{-3})$.
(For example, one can take $K=\R$, $L=\R(\sqrt{-3})=\C$.)
We set $\Gamma=\Gal(L/K)$, $\Gamma=\{\id,\gamma\}$,
and we write the action of $\gamma$ on $a\in L$ as $a\mapsto{}^\gamma a$.

Let $G=\SU(3, L/K,H):=\SU(L^3, H)$, the special unitary group of the $L/K$-Hermitian form with  matrix $H$,
where $H\in M_3(L)$ is a nondegenerate $3\times 3$ Hermitian matrix.
Then $G$ is a simple $K$-group, an outer $L/K$-form of the split $K$-group $\SL_{3,K}$.
Note that  $G=\SU(3,L/K,H)$ is an {\em inner} form of the $K$-group
$\SU_{3,L/K}:=\SU(3,L/K, I_3)$, where $I_3=\diag(1,1,1)$.
\end{subsec}

\begin{theorem} \label{thm:su3}
Let  a field $K$, the quadratic field extension $L=K(\zeta)$ of $K$,
and an Hermitian matrix  $H\in M_3(L)$ be as in \S\,\ref{subsec:K-L}.
Then the  $K$-group $G=\SU(3,L/K,H)$  is  Cayley.
\end{theorem}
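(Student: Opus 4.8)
The plan is to reduce the statement, via the torus criterion of Proposition \ref{prop:cor6.5}, to an explicit equivariant birational problem on a single maximal torus, and then to solve that problem by a direct construction. Since \S\ref{subsec:K-L} exhibits $G=\SU(3,L/K,H)$ as an inner form of the quasi-split group $\SU_{3,L/K}=\SU(3,L/K,I_3)$, Proposition \ref{rem2.4} shows it is enough to prove that one such group is Cayley. By Proposition \ref{prop:cor6.5} this amounts to constructing, for some maximal $K$-torus $T\subset G$, a $W(G,T)$-equivariant $K$-birational isomorphism $T\birat\Lie(T)$, with $W(G,T)$ regarded as an algebraic $K$-group.

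First I would take $T$ to be the anisotropic maximal torus consisting of the diagonal matrices in $G$ whose entries are norm-one elements of $L$. Over $L$ we have $G_L\cong\SL_{3,L}$ with $T_L$ its diagonal torus, so $X^*(T_L)$ is the weight lattice of $A_2$, with $W=\Sym_3$ permuting the coordinate characters $\chi_1,\chi_2,\chi_3$ (subject to $\chi_1+\chi_2+\chi_3=0$); since each diagonal entry lies in a norm-one torus, whose character lattice over $L$ is $\Z$ with Galois acting by $-1$, the nontrivial element of $\Gamma=\Gal(L/K)$ acts on $X^*(T_L)$ by $-\id$. As $s_{-\alpha}=s_\alpha$ for every root $\alpha$, this $\Gamma$-action is trivial on $W$, so $W(G,T)$ is the constant group scheme $\Sym_3$ and the conjugation action of $\Sym_3$ on $T$ is by $K$-automorphisms. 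Thus $T$ and $\Lie(T)$ are genuine $\Sym_3$-varieties over $K$ in the sense of \S\ref{subsec:W-Gamma}, and by the full faithfulness recorded there the task becomes the construction of one $(\Sym_3,\Gamma)$-equivariant $L$-birational isomorphism $T_L\birat\Lie(T_L)$: here $\Sym_3$ permutes coordinates on both sides, while $\Gamma$ acts by $t\mapsto\overline{t}^{\,-1}$ on $T_L\subset\gm^3$ and by $x\mapsto-\overline{x}$ on $\Lie(T_L)=\{x_1+x_2+x_3=0\}$.

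To build this map I would first dispose of the $\Gamma$-action using the one-variable Cayley transform $f(t)=\tfrac{t-1}{t+1}$, which has rational coefficients and satisfies $f(t^{-1})=-f(t)$. Applying $f$ coordinatewise gives an $(\Sym_3,\Gamma)$-equivariant birational isomorphism $\gm^3\birat\mathbb{A}^3$ carrying $T_L=\{t_1t_2t_3=1\}$ onto the $\Sym_3$-invariant cubic surface $S=\{x_1+x_2+x_3+x_1x_2x_3=0\}$ (obtained by expanding $\prod(1+x_i)=\prod(1-x_i)$). It then remains to produce an $(\Sym_3,\Gamma)$-equivariant birational isomorphism $S\birat\Lie(T_L)$ from this cubic surface to the plane $\{x_1+x_2+x_3=0\}$; equivalently, a $\Sym_3$-equivariant birational map defined over $\Q$ that is odd, so as to intertwine the two $-\id$-twisted $\Gamma$-actions. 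Both surfaces are rational with $\Sym_3$-quotient equal to $\mathbb{A}^2$, so I would construct the map by matching them over this common quotient—using the invariants $(e_2,e_3)$ on the plane and the induced generators on $S$—and then checking birationality and oddness directly.

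The main obstacle is precisely this last step: exhibiting the isomorphism $S\birat\Lie(T_L)$ in closed form and verifying birationality together with $\Sym_3$- and $\Gamma$-equivariance. The naive candidates (rescaling $S$ into the plane, or projecting) turn out to be only generically finite of degree $3$, hence not birational, so a genuine correction is required; the cube roots of unity present in $L$ (equivalently $\sqrt{-3}$) are what make the requisite coordinates rational, and since the whole construction is defined over $\Q$ it applies uniformly for every admissible $K$. Once such a map is found, composing it with the coordinatewise Cayley transform yields the desired isomorphism $T_L\birat\Lie(T_L)$, and Proposition \ref{prop:cor6.5} then gives that $G=\SU(3,L/K,H)$ is Cayley.
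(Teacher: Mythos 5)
Your reduction is sound and coincides with the paper's: pass to the quasi-split form $\SU_{3,L/K}$ via Proposition \ref{rem2.4}, invoke Proposition \ref{prop:cor6.5}, identify the Weyl group as the constant group $\Sym_3$ with the Galois group acting by $-\id$ on characters, and use the descent formalism of \S\,\ref{subsec:W-Gamma} to restate the problem as finding a $(\Sym_3,\Gamma)$-equivariant $L$-birational isomorphism between $T_L$ with the action $t\mapsto{}^\gamma t^{-1}$ and $\Lie(T_L)$ with the action $x\mapsto-{}^\gamma x$. The coordinatewise Cayley transform onto the cubic $S=\{x_1+x_2+x_3+x_1x_2x_3=0\}$ is also correct as far as it goes.

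But the proof stops exactly where the theorem begins. Everything up to that point is formal; the entire content of the statement is the construction of the $(\Sym_3,\Gamma)$-equivariant birational isomorphism from $S$ (equivalently from $T_L$) to the plane, and you do not produce it --- you write ``once such a map is found'' after observing that the naive candidates fail. The suggestion to ``match the two surfaces over their common $\Sym_3$-quotient $\A^2$'' is not a valid strategy even in outline: two $\Sym_3$-surfaces with birationally equivalent quotients need not be $\Sym_3$-equivariantly birational (that is precisely the kind of obstruction that makes $\PGU_3$ fail to be Cayley in Theorem \ref{thm:PGU3}), and the additional $\Gamma$-equivariance is a further constraint, not a bookkeeping detail. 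The paper fills this gap with a specific chain of equivariant maps: $T'_L\cong(\G_{m,L}^3/\G_{m,L})_\twisted \dasharrow (\bbP(\Lt)\times_L\bbP(\Lt))_\twisted$ via $[x]\mapsto([x-\tau(x)\one],[x^{-1}-\tau(x^{-1})\one])$, then the Segre embedding onto the quadric $Q$, then stereographic projection from the $(\Sym_3,\Gamma)$-fixed point $[D_{21}]$ onto a linear subspace that is equivariantly isomorphic to $\Lt'$ --- and it is only here, in the eigenbasis $D_1=\diag(1,\zeta,\zeta^2)$, $D_2=\diag(1,\zeta^2,\zeta)$ and the rescaling by $\sqrt{-3}$, that the hypothesis $\zeta\in L$ is actually used. (The Appendix gives a second construction, projecting the degree-$6$ del Pezzo compactification of $T$ from its tangent plane at $e$ and using the invariant real point $\eta$ on the resulting quadric.) Without some such explicit construction, or an argument for the existence of the invariant point/linear system one projects from, your argument does not establish the theorem.
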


Theorem \ref{thm:su3} will be proved below.

\begin{corollary} \label{cor:su3}
The $\R$-groups $\SU_3$ and $\SU(2,1)$ are Cayley.\qed
\end{corollary}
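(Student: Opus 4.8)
The plan is to prove Theorem \ref{thm:su3} in the general setting of \S\ref{subsec:K-L}, and then obtain Corollary \ref{cor:su3} as an immediate specialization. By Proposition \ref{prop:cor6.5}, the group $G=\SU(3,L/K,H)$ is Cayley over $K$ if and only if there exists a $W(G,T)$-equivariant $K$-birational isomorphism $T\birat\Lie(T)$ for a maximal $K$-torus $T\subset G$. So the first step is to choose a convenient maximal $K$-torus $T$ and compute its character lattice together with the action of the Galois group $\Gamma=\{\id,\gamma\}$ and of the Weyl group $W=W(G,T)\cong\frakS_3$. Since $G$ is an outer form of $\SL_{3,K}$, the Galois action on the root datum will involve a nontrivial automorphism of the Dynkin diagram of type $\AA_2$, so I would expect the semilinear $\Gamma$-action on $T$ and on the cocharacter lattice to be twisted by this diagram automorphism composed with the standard complex-conjugation-type action. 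The point is that $T$ is an anisotropic-modulo-center torus whose cocharacter lattice, as a $\Z[W\rtimes\Gamma]$-module, can be written down explicitly.

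Once the combinatorial data are in hand, I would reduce, via the fully faithful base change functor of \S\ref{subsec:W-Gamma}, to constructing a $(W,\Gamma)$-equivariant birational isomorphism $T_L\birat\Lie(T)_L$ over $L$; here $T_L$ becomes a split torus over $L$ isomorphic to the maximal torus of $\SL_{3,L}$, and $\Lie(T)_L$ is its Lie algebra, which as a $W$-representation is the standard Cartan $\ttt=\{(x_1,x_2,x_3)\mid\sum x_i=0\}$ with $W=\frakS_3$ permuting coordinates. The second step is therefore to produce an explicit $\frakS_3$-equivariant birational map from the torus to $\ttt$ over $L$ that additionally intertwines the two semilinear $\Gamma$-actions. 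A natural candidate is built from symmetric-function coordinates or from a Cayley-type formula adapted to $\SL_3$; concretely one writes down rational $W$-invariant (or $W$-covariant) functions in the torus coordinates whose differentials span $\ttt$, arranges them into a birational map to $\ttt$, and checks $W$-equivariance by construction.

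The main obstacle, and the reason this case is singled out as ``more difficult,'' is the descent to $K$: the candidate birational map must be compatible with the \emph{twisted} semilinear action of $\gamma$, not merely with the naive conjugation. Because $G$ is an outer form, $\gamma$ acts on $T_L$ through the diagram automorphism, so the $W$-equivariant map over $L$ must be chosen so that conjugating it by $\gamma$ returns the same map; verifying this is the heart of the ``explicit calculations'' promised in the Introduction. I would organize this by selecting coordinates on $T_L$ in which both the $W$-action and the $\gamma$-action are transparent (for instance eigencoordinates for $\gamma$, using $\zeta$ and $L=K(\sqrt{-3})$), writing the sought map in those coordinates, and then imposing the $\Gamma$-equivariance as a set of reality conditions on the rational functions involved. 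The remaining steps are routine: confirm that the resulting map is birational (dominant and generically invertible, by checking a Jacobian or by exhibiting an inverse) and that it is defined over the appropriate field, so that by \S\ref{subsec:W-Gamma} it descends to a genuine $W$-equivariant $K$-birational isomorphism $T\birat\Lie(T)$.

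Finally, Corollary \ref{cor:su3} follows by taking $K=\R$ and $L=\C=\R(\sqrt{-3})$: the choices $H=I_3$ and $H=\diag(1,1,-1)$ give $G=\SU_3$ and $G=\SU(2,1)$ respectively, both covered by Theorem \ref{thm:su3}. Alternatively, since $\SU(2,1)$ is an inner form of $\SU_3$, its Cayley property also follows from that of $\SU_3$ via Proposition \ref{rem2.4}, so the corollary requires no further argument.
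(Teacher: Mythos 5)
Your overall architecture matches the paper's: reduce via Proposition \ref{prop:cor6.5} to a $W$-equivariant birational isomorphism between the maximal torus and its Lie algebra, pass to $L$ via the fully faithful base change of \S\,\ref{subsec:W-Gamma} so that the problem becomes a $(\frakS_3,\Gamma)$-equivariant birational isomorphism $T'_L\birat\Lt'$ over $L$, and then deduce the corollary by taking $K=\R$, $L=\C$, with $\SU(2,1)$ handled either by the choice $H=\diag(1,1,-1)$ or by Proposition \ref{rem2.4}. That final specialization is exactly how the paper obtains the corollary from Theorem \ref{thm:su3}, and it is correct.

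The gap is in the middle: you never actually produce the $(\frakS_3,\Gamma)$-equivariant birational isomorphism, and the candidate you gesture at --- symmetric-function coordinates or a ``Cayley-type formula adapted to $\SL_3$,'' followed by imposing reality conditions --- is not a step one can expect to complete routinely. No naive Cayley-type formula works even over $\C$ (this is precisely why Popov's theorem that $\SL_{3,\C}$ is Cayley was unexpected), and, more importantly, the answer genuinely depends on the real form: for the split form $\SL_{3,\R}$ the analogous $(\frakS_3,\Gamma)$-equivariant birational isomorphism does \emph{not} exist (Theorem \ref{thm:SL3}), so ``choose a $W$-equivariant map over $L$ and arrange $\Gamma$-equivariance'' cannot be treated as a formal verification. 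The paper's actual construction (following \cite[Prop.~9.1]{LPR}) passes from $T'_L\cong(\G_{m,L}^3/\G_{m,L})_\twisted$ to $(\P(\Lt)\times_L\P(\Lt))_\twisted$ via $[x]\mapsto\bigl([x-\tau(x)\one],[x^{-1}-\tau(x^{-1})\one]\bigr)$, then uses the Segre embedding onto a quadric $Q$ and a stereographic projection from the point $[D_{21}]$, where $D_1=\diag(1,\zeta,\zeta^2)$ and $D_2=\diag(1,\zeta^2,\zeta)$. The crucial point --- and the place where the outer twisting and the hypothesis $\zeta\notin K$ enter --- is that $[D_{21}]$ is an $L$-point fixed by both $\frakS_3$ and the twisted $\Gamma$-action, so that the projection and the final identifications $V_{11,22}\cong\Lt\cong\Lt'$ (the last via multiplication by $\sqrt{-3}$) are $(\frakS_3,\Gamma)$-equivariant. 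Without this construction, or an equally explicit substitute, the proof of Theorem \ref{thm:su3}, and hence of the corollary, is incomplete.
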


\begin{subsec}
Let $K,L$ be as in \S\,\ref{subsec:K-L}.
Consider the torus $\G_{m,K}^3$
and write the standard   action of $\Sym_3$ on it, given by:
\begin{equation}\label{sigma-action-U}
\sigma(x_1, x_2, x_3):=
(x_{\sigma^{-1}(1)}, x_{\sigma^{-1}(2)}, x_{\sigma^{-1}(3)})
\quad\text{for}\hskip 2mm \sigma\in\Sym_3.
\end{equation}
We consider the $K$-subtorus
\begin{equation*}
T:=\{(x_1,x_2,x_3)\in\G_{m,K}^3\mid x_1x_2x_3=1\}
\end{equation*}
and we set $\lt=\Lie(T)$.

We set   $T_L=T\times_K L$, $\Lt=\Lie(T_L)=\lt\otimes_K L$, then
$$
\Lt=\{(x_1,x_3,x_3)\in L^3\mid x_1+x_2+x_3=0\}.
$$
The group $\Sym_3$ acts on $T_L$ and $\Lt$ by formula \eqref{sigma-action-U}, and $\Gamma$ acts by
$$
{}^\gamma(x_1,x_2,x_3)=({}^\gamma x_1, {}^\gamma x_2, {}^\gamma x_3).
$$

We consider also the $\Gamma$-twisted $(\SS_3,\Gamma)$-varieties $T'_L$ and $\Lt'$:
the same $L$-varieties $T_L$ and $\Lt$ with the same $\SS_3$-actions,
 but with the twisted actions of $\gamma$:
\begin{align*}
&(x_1,x_2,x_3)\mapsto ({}^\gamma x_1^{-1},{}^\gamma x_2^{-1},{}^\gamma x_3^{-1})\quad \text{for } T'_L,\\
&(x_1,x_2,x_3)\mapsto (-{}^\gamma x_1,-{}^\gamma x_2,-{}^\gamma x_3)\quad \text{for }\Lt'.
\end{align*}
These $(\SS_3,\Gamma)$-varieties over $L$ come from some $\SS_3$-varieties $T'$ and $\lt'$ defined over $K$
which are easy to describe, see below.
\end{subsec}

\begin{subsec}
Let $T_\sS$ denote the diagonal maximal torus of $\SU_{3,L/K}$, and let $\lt_\sS$ denote its Lie algebra.
Let $N_\sS$ denote the normalizer of $T_\sS$ in  $\SU_{3,L/K}$, and set $W=N_\sS/T_\sS$.
The finite algebraic group $W$ is canonically isomorphic to the symmetric group $\SS_3$ with trivial Galois action.
We see that $T_\sS$, and  $\lt_\sS$ are $\SS_3$-varieties over $K$.
Furthermore, it is well known that $T_\sS\times_K L$ is canonically isomorphic to $T'_L$ and that
$\lt_\sS\otimes_K L$ is canonically isomorphic to $\Lt'$ as $(\SS_3,\Gamma)$-varieties.
Therefore we set
$$T':=T_\sS,\quad \lt':=\lt_\sS.$$
\end{subsec}

\begin{proposition}\label{prop:birat-tori}
Let $K$ be a field of characteristic 0.
We assume that $K$ contains no nontrivial cube root of 1, and we set $L=K(\zeta)$, where $\zeta^3=1$, $\zeta\neq 1$.
Then the  $(\SS_3,\Gamma)$-varieties $T'_L$ and $\Lt'$ are $(\SS_3,\Gamma)$-equivariantly birationally isomorphic over $L$.
\end{proposition}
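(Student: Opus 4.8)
The plan is to produce the required isomorphism from a single explicit map, after repackaging the twist as a symmetry condition. By the full faithfulness of the base-change functor recorded in \S\,\ref{subsec:W-Gamma}, it suffices to construct one $(\SS_3,\Gamma)$-equivariant birational isomorphism $\Lt'\birat T'_L$; I will build it on the underlying $L$-varieties $\Lt$ and $T_L$ and verify the equivariances separately. First I would reduce the twisted $\gamma$-equivariance to an \emph{oddness} condition. Suppose $\phi\colon\Lt\birat T_L$ is $\SS_3$-equivariant, is defined over $K$ (so that it intertwines the \emph{standard} conjugations, ${}^\gamma(\phi(x))=\phi({}^\gamma x)$), and satisfies $\phi(-x)=\phi(x)^{-1}$, the inverse taken coordinatewise in $T_L$. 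Then for the twisted actions one computes $\phi(-{}^\gamma x)=\phi({}^\gamma x)^{-1}={}^\gamma(\phi(x))^{-1}$, which is exactly the identity $\phi\circ\gamma_{\Lt'}=\gamma_{T'}\circ\phi$. Thus the whole problem collapses to finding one $K$-rational, $\SS_3$-equivariant birational $\phi\colon\lt\birat T$ that interchanges negation on $\lt$ with inversion on $T$.

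The genuine content is that such a $\phi$ cannot be taken coordinatewise: the naive Cayley map $x_i\mapsto(1+x_i)/(1-x_i)$ is odd and $\SS_3$-equivariant, but sends $\sum x_i=0$ to $\prod y_i=(1+e_2+e_3)/(1+e_2-e_3)$, which is not identically $1$, so it misses $T$. To handle the product-one constraint I would exploit the hypothesis $\zeta\in L$ to diagonalize the $3$-cycle. On $\Lt$ introduce the Fourier coordinates $\xi=x_1+\zeta x_2+\zeta^2 x_3$ and $\eta=x_1+\zeta^2 x_2+\zeta x_3$ (the condition $x_1+x_2+x_3=0$ being the vanishing of the third coordinate), so that the $3$-cycle acts by $(\xi,\eta)\mapsto(\zeta\xi,\zeta^2\eta)$ and the transposition by $(\xi,\eta)\mapsto(\zeta\eta,\zeta^2\xi)$, identifying $\Lt$ with $\mathbb{A}^2$. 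On $T$ use the Lagrange resolvents $\rho=y_1+y_2+y_3$, $\upsilon=y_1+\zeta y_2+\zeta^2 y_3$, $\tau=y_1+\zeta^2 y_2+\zeta y_3$; the identity $\prod_{j=0}^{2}(\rho+\zeta^{j}\upsilon+\zeta^{2j}\tau)=\rho^3+\upsilon^3+\tau^3-3\rho\upsilon\tau$ shows that $y_1y_2y_3=1$ becomes the affine cubic surface $S\colon\rho^3+\upsilon^3+\tau^3-3\rho\upsilon\tau=27$, on which the $3$-cycle again acts with weights $(\upsilon,\tau)\mapsto(\zeta\upsilon,\zeta^2\tau)$ and $\rho$ invariant.

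With the two actions matched weight-for-weight, I would look for $\phi$ in the form $(\rho,\upsilon,\tau)=(r,\,s\,\xi,\,s\,\eta)$, where $r,s$ are rational functions of the two $\SS_3$-invariants $P=\xi\eta$ and $Q=\xi^3+\eta^3$. This shape is automatically $\SS_3$-equivariant (the weights and the transposition-swap are built in), so it remains to impose three things: that the image lie on $S$, i.e. $r^3+s^3Q-3rs^2P=27$ as an identity in $P,Q$; that $\phi$ be birational, which I would check by solving back for $(\xi,\eta)$; and the oddness/inversion compatibility, which in these coordinates is a functional equation relating $(r(P,Q),s(P,Q))$ under $Q\mapsto-Q$ to the explicit inversion involution of $S$ (the birational involution interchanging $e_1=\rho$ and $e_2=\tfrac13(\rho^2-\upsilon\tau)$). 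Solving this small system for $r,s$ by hand, then reading the resulting formula back in the original coordinates $x_i,y_i$ to confirm that it has coefficients in $K$, completes the construction; the Cayley property of $\SU(3,L/K,H)$ then follows from Proposition \ref{prop:cor6.5}.

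The step I expect to be the real obstacle is the simultaneous fulfilment of the product-one constraint and the inversion compatibility. Preserving $\prod y_i=1$ rationally is exactly the feature that makes type $\AA_2$ delicate (it is what defeats the coordinatewise Cayley map), and it must be achieved by a map that additionally trades negation for inversion. It is precisely this last compatibility that is available for the unitary twist but, as the Appendix shows, \emph{fails} for the split form $\SL_{3,\R}$. Everything else—the two manifest equivariances and the descent to $K$—is bookkeeping once the functions $r,s$ are found.
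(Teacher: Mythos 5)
Your reduction in the first paragraph contains a fatal over-strengthening. You split the single required identity $\phi\circ\gamma_{\Lt'}=\gamma_{T'}\circ\phi$ into two separate conditions: that $\phi\colon\Lt\birat T_L$ be defined over $K$ (intertwining the standard conjugations) and that it be ``odd'' ($\phi(-x)=\phi(x)^{-1}$). The first condition alone is already impossible in the main case of interest. An $\SS_3$-equivariant birational isomorphism $\lt\birat T$ defined over $K$ is, by Proposition \ref{prop:cor6.5} applied to the split group $\SL_{3,K}$ with its diagonal maximal torus $T$ and constant Weyl group $\SS_3$, exactly the statement that $\SL_{3,K}$ is Cayley; for $K=\R$ this is refuted by Theorem \ref{thm:SL3} of the Appendix. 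So the ``small system'' for $r,s$ that you propose to solve provably has no solution with coefficients in $\R$, and the obstacle you flag at the end is not a technical difficulty to be overcome by cleverness with resolvents --- it is an actual obstruction. (Independently of this, the proposal never exhibits $r,s$, so even its own programme is left incomplete; but the point above shows the programme cannot be completed as stated.)

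The fix is to impose only the single twisted intertwining relation and to allow $\phi$ to be defined over $L$ but not over $K$. This is what the paper does: it never passes through the untwisted $K$-forms. Instead it identifies $T'_L$ with a twisted form of $\G_{m,L}^3/\G_{m,L}$, maps that birationally to $\bbP(\Lt)\times_L\bbP(\Lt)$ with a twisted $(\SS_3,\Gamma)$-structure via $[x]\mapsto([x-\tau(x)\one],[x^{-1}-\tau(x^{-1})\one])$ (the LPR construction), embeds by Segre into $\bbP(\Lt\otimes_L\Lt)$, and then stereographically projects the image quadric from the $(\SS_3,\Gamma)$-fixed point $[D_{21}]$; the final identification with $\Lt'$ uses multiplication by $\sqrt{-3}$, which is visibly not defined over $K$. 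The existence of that fixed rational point on the quadric in the \emph{twisted} structure is precisely what is available for $\SU_3$ and unavailable for $\SL_{3,\R}$ --- your instinct that ``something works for the unitary twist and fails for the split form'' is right, but it must be located in the twisted geometry, not in an extra functional equation on top of a $K$-rational solution.
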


\begin{subsec}
{\em Reduction of Theorem \ref{thm:su3} to Proposition \ref{prop:birat-tori}.}
Since our group $\SU(3,L/K,H)$ is an inner form of $\SU_{3,L/K}$,
by Proposition \ref{rem2.4} in order to prove that the group $\SU(3,L/K,H)$ is Cayley,
it suffices to prove that $\SU_{3,L/K}$ is  Cayley.
By Proposition \ref{prop:cor6.5}, the group $\SU_{3,L/K}$ is Cayley if and only if
the $\SS_3$-varieties $T'=T_\sS$ and $\lt'=\lt_\sS$ are $\SS_3$-equivariantly birationally isomorphic over $K$.
The discussion in \S\,\ref{subsec:W-Gamma} shows that they are $\SS_3$-equivariantly birationally isomorphic over $K$
if and only if the $(\SS_3,\Gamma)$-varieties $T'_L$ and $\Lt'$ are  $(\SS_3,\Gamma)$-equivariantly birationally isomorphic over $L$.
Therefore, Theorem \ref{thm:su3}  follows from  the  Proposition \ref{prop:birat-tori}.
\end{subsec}

We give here a proof  of Proposition \ref{prop:birat-tori} which is close to the proof of Proposition 9.1 in \cite{LPR}.
For an alternative proof (in the case $K=\R$) see Appendix A, Theorem \ref{thm:su3-Igor}.

\begin{subsec}\label{3.2}
We consider the variety  $(\G_{m,L}^3/\G_{m,L})_\twisted$, which is just $\G_{m,L}^3/\G_{m,L}$
(with $\G_{m,L}$ imbedded diagonally in $\G_{m,L}^3$) with the following (twisted) $\SS_3$-action and {\em twisted $\Gamma$-action}:
$$
\sigma([x])=[\sigma(x)^{\sign \sigma}],\quad {}^\gamma[x]=[{}^\gamma x^{-1}]\quad\text{for }x\in \G_{m,L}^3,\ \sigma\in \SS_3.
$$
Here we write $[x]\in  \G_{m,L}^3/\G_{m,L}$ for the class of $x\in \G_{m,L}^3$.
We have an $(\SS_3,\Gamma)$-equivariant isomorphism
$$
(\G_{m,L}^3/\G_{m,L})_\twisted\isoto T'_L,\quad [x_1,x_2,x_3]\mapsto (x_2/x_3, x_3/x_1, x_1/x_2).
$$
It remains to prove that $(\G_{m,L}^3/\G_{m,L})_\twisted$ is
$(S_3,\Gamma)$-equivariantly birationally isomorphic to $\Lt'$.
\end{subsec}

\begin{subsec}\label{3.3}
Consider the following (twisted) $\SS_3$-action  and {\em twisted $\Gamma$-action} on the set $\Lt\times \Lt$:
\begin{align*}\label{S3-Gamma-twisted1}
\sigma(x, y):=&
\begin{cases}
 \bigl(\sigma(x), \sigma(y)\bigr)
 &\text{if $\sigma$ is even,} \\
 \bigl(\sigma(y), \sigma(x)\bigr)
 &\text{if $\sigma$ is odd,}
\end{cases}
\hskip 4mm\text{where}\hskip 2mm
\sigma\in\Sym_3,\ x, y\in\Lt,\\
{}^\gamma(x,y):=&({}^\gamma y,{}^\gamma x).
\end{align*}
These actions of $\SS_3$ and $\Gamma$ on $\Lt\times\Lt$
induce actions on the surface $\P(\Lt)\times_L \P(\Lt)$,
on the tensor product $\Lt\otimes_L \Lt$
and on the  3-dimensional projective space $\P(\Lt\otimes_L \Lt)$, and
we write
$$
(\P(\Lt)\times_L \P(\Lt))_\twisted,\
(\Lt\otimes_L \Lt)_\twisted\text{ and }\P(\Lt\otimes_L \Lt)_\twisted
$$
for the corresponding  $(\SS_3,\Gamma)$-varieties.
\end{subsec}

\begin{subsec}\label{3.4}
We claim that the $(\SS_3,\Gamma)$-varieties
$(\G_{m,L}^3/\G_{m,L})_\twisted$ and
$(\bbP(\Lt)\times_L \bbP(\Lt))_\twisted$ are
 $(\SS_3,\Gamma)$-equivariantly birationally isomorphic.
We write $[t]\in\P(\Lt)$ for the class of $t\in\Lt$.
Consider the rational map
\begin{align*}
\varphi:
&(\G_{m,L}^3/\G_{m,L})_\twisted\dasharrow (\bbP(\Lt)\times_L \bbP(\Lt))_\twisted\\
&[x]\mapsto \bigl(\bigl[x-\tau(x)\one\bigr], \bigl[x^{-1}-\tau(x^{-1})\one\bigr]\bigr),
\end{align*}
where $\tau(x_1,x_2,x_3)=(x_1+x_2+x_3)/3$ and $\one=(1,1,1)\in L^3$.
It is immediately seen that $\varphi$ is $(\SS_3,\Gamma)$-equivariant.
An inverse rational map to $\varphi$ was constructed in \cite[Proof of Prop.~9.1, Step 1]{LPR}.
Thus $\varphi$ is an $(\SS_3,\Gamma)$-equivariant birational isomorphism.
\end{subsec}

\begin{subsec}\label{3.5}
Consider the Segre embedding
\begin{equation*}
(\bbP(\Lt) \times_L \bbP(\Lt))_\twisted
\hookrightarrow \bbP(\Lt \otimes_L \Lt)_\twisted
\end{equation*}
given by $([x],[y])\mapsto [x\otimes y]$,
it is $(\SS_3,\Gamma)$-equivariant.
Its image is a quadric $Q$ in
$\bbP(\Lt\otimes_L \Lt)$ described as follows. Choose a
basis $D_1 := {\rm diag}(1, \zeta, \zeta^2)$,
$D_2 := {\rm diag}(1, \zeta^2, \zeta)$ of
$\Lt$, where $\zeta$ is our primitive cube root
of unity. Set $D_{ij} = D_i \otimes D_j$.
Then
\begin{equation}\label{quadric}
Q = \{ (\alpha_{11}: \alpha_{12}:
\alpha_{21}: \alpha_{22}) \mid \alpha_{11}
\alpha_{22} = \alpha_{12} \alpha_{21} \},
\end{equation}
where $ (\alpha_{11}:
\alpha_{12}: \alpha_{21}: \alpha_{22})$ is the
point of $\bbP(\Lt \otimes_L \Lt)$ corresponding
to
$$\alpha_{11} D_{11} + \alpha_{12} D_{12} +
\alpha_{21} D_{21} + \alpha_{22} D_{22}\in\Lt\otimes_L \Lt.
$$
\end{subsec}

\begin{subsec}\label{3.7}
We denote by $V_{11,22}$ the 2-dimensional subspace in $(\Lt\otimes_L \Lt)_\twisted$
with the basis $D_{11}, D_{22}$, and we denote by $V_{12}$ and $V_{21}$ the one-dimensional subspaces
generated by $D_{12}$ and $D_{21}$, respectively.
An easy calculation shows  that the subspace $V_{11,22}$ is $\SS_3$-invariant and $\Gamma$-invariant,
and that the basis vectors $D_{12}$ and $D_{21}$ are $\Sym_3$-fixed and $\Gamma$-fixed.

Consider the stereographic projection
$Q\dashrightarrow \P(V_{11,22}\oplus V_{12})$
from the $(\SS_3,\Gamma)$-fixed $L$-point $x_{21}:=[D_{21}]=(0:0:1:0)\in Q(L)$ to the $(\SS_3,\Gamma)$-invariant plane
$\P(V_{11,22}\oplus V_{12})$.
This stereographic projection is a $(\Sym_3,\Gamma)$-equivariant birational isomorphism.
Furthermore, the embedding
$$
 V_{11,22}\into \P(V_{11,22}\oplus V_{12}),\quad x\mapsto [x+D_{12}]
$$
is an $(\SS_3,\Gamma)$-equivariant birational isomorphism.
Thus the quadric $Q$ is  $(\SS_3,\Gamma)$-equivariantly
birationally isomorphic to the  vector space $V_{11,22}$.
Since the 2-dimensional $(\SS_3,\Gamma)$-vector spaces $V_{11,22}$ and $\Lt$ are isomorphic
(the map of bases $D_{11}\mapsto D_2,\ D_{22}\mapsto D_1$ induces an $(\SS_3,\Gamma)$-isomorphism  $V_{11,22}\isoto\Lt$),
and $\Lt$ is isomorphic to $\Lt'$  (an isomorphism is given by
$(x_i)\mapsto (\sqrt{-3}\cdot x_i)$),
we conclude that $Q$ is  $(\SS_3,\Gamma)$-equivariantly
birationally isomorphic to  $\Lt'$.

Thus $T'_L$ is  $(\SS_3,\Gamma)$-equivariantly
birationally isomorphic to  $\Lt'$.
This completes the proofs of Proposition \ref{prop:birat-tori},  Theorem \ref{thm:su3}, and Corollary \ref{cor:su3}.
\qed

\end{subsec}

\section{The groups  $G\times \G_m^2$}\label{sec:SL3}

In this section we  prove Theorem \ref{thm:2}.
Let $K$ be a field of characteristic 0, and let $\Kbar$ be a fixed algebraic closure of $K$.

Let $\GG_{2,K}$ denote the {\em split} $K$-group of type $\GG_2$.
Since by \cite[Proposition 9.10]{LPR}, the group $\GG_{2,\Kbar}$ is not Cayley over $\Kbar$,
we see that $\GG_{2,K}$ is not Cayley.

\begin{proposition}\label{prop:G2}
For any field $K$ of characteristic 0,
the split $K$-group $\GG_{2,K}\times_K \G_{m,K}^2$ is Cayley.
\end{proposition}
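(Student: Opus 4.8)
The plan is to apply the birational criterion of Proposition~\ref{prop:cor6.5}. Let $T_0$ be a maximal $K$-torus of the split group $\GG_{2,K}$ (so $T_0$ is $K$-split of rank $2$), put $\lt_0=\Lie(T_0)$, and let $W=W(\GG_{2,K},T_0)$ be the Weyl group, regarded as a \emph{constant} algebraic $K$-group because $\GG_{2,K}$ is split. Then $T:=T_0\times_K\G_{m,K}^2$ is a maximal torus of $G=\GG_{2,K}\times_K\G_{m,K}^2$, with $W$ acting through the $T_0$-factor and trivially on $\G_{m,K}^2$, and $\Lie(T)=\lt_0\oplus\A^2$ with $W$ acting on $\lt_0$ and trivially on $\A^2$. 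Replacing $\G_{m,K}^2$ birationally by $\A^2$ with the trivial action, Proposition~\ref{prop:cor6.5} reduces the claim to producing a $W$-equivariant $K$-birational isomorphism
\[
T_0\times\A^2\ \dashrightarrow\ \lt_0\times\A^2 .
\]
I record the structure of $W$: one has $W\cong\Sym_3\times\{\pm1\}$, where the central element $w_0=-1$ is the longest element; on $T_0$ it acts by inversion $t\mapsto t^{-1}$, on $\lt_0$ by $x\mapsto -x$, while $\Sym_3$ acts on both through the reflection representation. I would work in the standard model $T_0=\{x_1x_2x_3=1\}$, $\lt_0=\{x_1+x_2+x_3=0\}$ with $\Sym_3$ permuting coordinates, the identification of $T_0$ with $\G_{m,K}^3/\G_{m,K}$ as a $W$-variety (up to the harmless twist of $\Sym_3$ by the sign character) being the routine $\GG_2$-analogue of the one used in \S\,\ref{subsec:K-L} for $\SU_3$.

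Next I would reuse the purely geometric part of the $\SU_3$ computation. The birational isomorphisms of \S\S\,\ref{3.2}--\ref{3.4} are built from rational operations only (no $\zeta$ or $\sqrt{-3}$ enters before \S\,\ref{3.5}) and are equivariant for $\Sym_3$ together with the inversion involution. Transporting them verbatim, with the genuine \emph{algebraic} involution $w_0$ in the role played there by the semilinear $\gamma$, yields a $W$-equivariant birational isomorphism
\[
T_0\ \dashrightarrow\ \mathbb{S}:=\bbP(\lt_0)\times_K\bbP(\lt_0),
\]
where $\Sym_3$ acts diagonally through the reflection representation and $w_0$ interchanges the two factors. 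This is exactly where the $\SU_3$ and $\GG_2$ arguments part ways: in \S\,\ref{3.7} the proof of Proposition~\ref{prop:birat-tori} finishes by stereographic projection from a $(\Sym_3,\Gamma)$-fixed point and the identification $\Lt\cong\Lt'$ given by multiplication by $\sqrt{-3}$; for the split, linearly acting $w_0$ there is neither such a fixed point nor a $K$-linear untwisting of the relevant sign character, which is precisely why $\GG_{2,K}$ itself is \emph{not} Cayley. The two extra factors are what circumvent this.

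The key step is a vector-bundle (``no-name'') trick that trades $\mathbb{S}$ for a linear representation at the cost of two affine directions. I would consider the rank-$2$ $W$-equivariant bundle $E=\calO(-1)\boxtimes\calO(-1)$ on $\mathbb{S}$ ($\Sym_3$ diagonal, $w_0$ swaps the tensor factors, so $w_0^{*}E\cong E$). Its total space is birationally $\lt_0\oplus\lt_0$ with $\Sym_3$ acting diagonally and $w_0$ interchanging the two summands; decomposing into the $(\pm1)$-eigenspaces of this swap gives $\mathrm{Tot}(E)\cong_W A\oplus B$, where $A=\lt_0$ (with $w_0$ acting by $-1$) and $B$ is the reflection representation of $\Sym_3$ with $w_0$ acting by $+1$. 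Since $W$ acts faithfully, hence generically freely, on the surface $\mathbb{S}$, the no-name lemma \cite{LPR} gives $\mathbb{S}\times\A^2\sim_W\mathrm{Tot}(E)\cong_W A\oplus B$. Finally $A=\lt_0$ is a faithful, hence generically free, $W$-variety, so a second application of the no-name lemma (base $A$, replacing the representation $B$ by the trivial $\A^2$) yields $A\oplus B\sim_W A\oplus\A^2=\lt_0\times\A^2$. Chaining,
\[
T_0\times\A^2\ \sim_W\ \mathbb{S}\times\A^2\ \sim_W\ A\oplus B\ \sim_W\ \lt_0\times\A^2,
\]
which is the required birational isomorphism; by Proposition~\ref{prop:cor6.5}, $\GG_{2,K}\times\G_{m,K}^2$ is Cayley.

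I expect the main obstacle to be organizational rather than conceptual once the bundle trick is identified: verifying that the chain of \S\S\,\ref{3.2}--\ref{3.4} really is equivariant for the algebraic involution $w_0$ and fixing the standard $W$-identification of $T_0$ (including the sign-character twist on $\Sym_3$), and checking $w_0^{*}E\cong E$ together with the eigenspace decomposition $\mathrm{Tot}(E)\cong_W A\oplus B$. The conceptual heart --- and the reason the number of extra factors is exactly $2$ --- is that $E$ has rank $2$: the two trivial directions are precisely what is needed to replace the geometric surface $\mathbb{S}$ by the representation $A\oplus B$ and then cancel $B$ against $A$.
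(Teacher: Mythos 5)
Your argument is correct and is in substance the proof the paper itself relies on: the paper reduces via Proposition \ref{prop:cor6.5} to the statement that $T\times_K\G_{m,K}^2$ and $\ttt\times_K\A_K^2$ are $\Sym_3\times\Sym_2$-equivariantly birational (Proposition \ref{prop:G2-torus}) and then outsources that statement to the proof of Proposition 9.11 in \cite{LPR} with the remark that it is field-independent, whereas you carry out the same reduction and then supply the outsourced construction (compactify $T_0$ as $\bbP(\lt_0)\times\bbP(\lt_0)$ via the rational maps of \S\S\,\ref{3.2}--\ref{3.4}, then trade the surface for the linear $W$-variety $\lt_0\oplus\lt_0$ and cancel the unwanted summand by two applications of the no-name lemma), which in particular makes the rationality over an arbitrary $K$ of characteristic $0$ transparent. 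One notational slip: $\calO(-1)\boxtimes\calO(-1)$ is a \emph{line} bundle, so the bundle you actually use --- rank $2$, total space birational to $\lt_0\oplus\lt_0$, with $w_0$ interchanging the two summands --- is $p_1^{*}\calO(-1)\oplus p_2^{*}\calO(-1)$; with that correction the eigenspace decomposition, the identification $A\cong\Lie(T_0)$ (using $V\otimes\operatorname{sgn}\cong V$ for the reflection representation of $\Sym_3$), and both invocations of the no-name lemma (the $W$-actions on $\bbP(\lt_0)\times\bbP(\lt_0)$ and on $A$ are indeed generically free) all go through.
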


\begin{corollary}\label{cor:G2}
For any $K$-group $G$ of type $\GG_2$ over a field $K$ of characteristic 0,
the $K$-group $G\times_K \G_{m,K}^2$ is Cayley.
\end{corollary}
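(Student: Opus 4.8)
The plan is to derive Corollary~\ref{cor:G2} from Proposition~\ref{prop:G2} by means of Proposition~\ref{rem2.4}, exploiting the fact that a group of type $\GG_2$ has no outer automorphisms. In other words, I expect this corollary to be essentially a formal consequence of the split case once one knows that every form of $\GG_2$ is an inner form.

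First I would recall that the Dynkin diagram of type $\GG_2$ admits no nontrivial automorphism (its two nodes, joined by a triple bond, carry distinct root lengths), so that $\Out(\GG_{2,K})=1$ and a group of type $\GG_2$ is at once simply connected and adjoint. Consequently $\Aut(\GG_{2,\Kbar})=\Inn(\GG_{2,\Kbar})=\GG_{2}(\Kbar)$, and every $K$-group $G$ of type $\GG_2$ is an \emph{inner} form of the split group $\GG_{2,K}$, say $G={}_c\GG_{2,K}$ for some cocycle $c\in Z^1\bigl(\Gal(\Kbar/K),\GG_2(\Kbar)\bigr)$.

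Next I would check that this twisting leaves the torus factor untouched, so that $G\times_K\G_{m,K}^2$ is an inner form of $\GG_{2,K}\times_K\G_{m,K}^2$. Indeed, the inner automorphisms of $\GG_{2,\Kbar}\times_K\G_{m,\Kbar}^2$ defined by conjugation by elements of the semisimple factor $\GG_2(\Kbar)$ act trivially on the central torus $\G_m^2$; hence twisting the product by the cocycle $c$ yields $\bigl({}_c\GG_{2,K}\bigr)\times_K\G_{m,K}^2=G\times_K\G_{m,K}^2$. By Proposition~\ref{prop:G2} the split product $\GG_{2,K}\times_K\G_{m,K}^2$ is Cayley, and Proposition~\ref{rem2.4} asserts that all inner forms of a Cayley group are Cayley; applying it to this inner form gives that $G\times_K\G_{m,K}^2$ is Cayley, as desired.

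The argument is essentially formal once Proposition~\ref{prop:G2} is in hand; the only point requiring care is the identification of $G\times_K\G_{m,K}^2$ as an inner form of the split product. This rests on the two observations above---that $\Out(\GG_2)$ is trivial, so no genuinely outer twisting can occur, and that inner twisting by the $\GG_2$-factor fixes the split torus $\G_{m,K}^2$---and is the place where the special nature of type $\GG_2$ enters. I do not anticipate any serious obstacle beyond verifying these compatibilities.
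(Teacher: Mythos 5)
Your proposal is correct and follows exactly the paper's route: the paper's proof is the one-line observation that $G\times_K \G_{m,K}^2$ is an inner form of $\GG_{2,K}\times_K \G_{m,K}^2$, so Proposition \ref{rem2.4} applies together with Proposition \ref{prop:G2}. You simply spell out the justification of that inner-form claim (triviality of $\Out(\GG_2)$ and the fact that inner twisting by the $\GG_2$-factor fixes the split torus), which the paper leaves implicit.
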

\begin{proof}
Since $G\times_K \G_{m,K}^2$ is an inner form of $\GG_{2,K}\times_K \G_{m,K}^2$,
by Proposition \ref{rem2.4} the corollary follows from Proposition \ref{prop:G2}.
\end{proof}

\begin{subsec}\label{subsec:G2}
Let $K$ be a field of characteristic 0.
We define a $K$-torus $T$  by
\begin{equation*}
T:=\{(x_1,x_2,x_3)\in\G_{m,K}^3\mid x_1x_2x_3=1\}.
\end{equation*}
We define a $K$-action of $\Sym_3$ on $T$ by
\begin{equation*}
\sigma(x_1, x_2, x_3):=
(x_{\sigma^{-1}(1)}, x_{\sigma^{-1}(2)}, x_{\sigma^{-1}(3)})
\quad\text{for}\hskip 2mm \sigma\in\Sym_3.
\end{equation*}
We define a $K$-action of $\SS_2$ on $T$ by
$$
\ve(t)=t^{-1} \text{ for } t\in T,
$$
where $\ve$ is the nontrivial element of $\SS_2$.
We obtain a $K$-action of $\SS_3\times \SS_2$ on $T$.
Set $\ttt=\Lie(T)$, then $\SS_3\times \SS_2$ acts on $\ttt$.
We may regard $T$ as a split maximal torus of  $\GG_{2,K}$, and $\SS_3\times \SS_2$ as the corresponding Weyl group,
then $T\times_K \G_{m,K}^2$ is a maximal torus of $\GG_{2,K}\times_K  \G_{m,K}^2$.
\end{subsec}

\begin{proposition}[\cite{LPR}]\label{prop:LPR-G2}
\label{prop:G2-torus}
For an arbitrary field $K$ of characteristic 0, the $K$-varieties $T\times_K \G_{m,K}^2$ and $\ttt\times_K \A_K^2$
are $\SS_3\times\SS_2$-equivariantly birationally isomorphic over $K$.
\end{proposition}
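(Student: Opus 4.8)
The plan is to prove the statement directly as an assertion about $W$-varieties, where $W:=\SS_3\times\SS_2$, using as the main engine the equivariant \emph{no-name lemma} (see \cite{LPR}): if a finite group $W$ acts generically freely on a variety $Y$ and $V$ is any finite-dimensional $W$-representation, then $Y\times V$ is $W$-equivariantly birationally isomorphic to $Y\times\A^{\dim V}$, where the affine factor carries the trivial $W$-action. Both $T$ and the reflection representation $\ttt$ carry generically free $W$-actions: the locus with nontrivial stabiliser is the union of the fixed-point sets of the reflections in $W$, which has codimension one, so the generic stabiliser is trivial. Hence the lemma applies with either $T$ or $\ttt$ as the base.

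First I would carry out the formal reductions. The open inclusion $\G_{m,K}^2\hookrightarrow\A_K^2$ is $W$-equivariant (both factors carry the trivial action) and birational, so $T\times_K\G_{m,K}^2\simeq_{W}T\times_K\A_K^2$. Applying the no-name lemma with base $T$ and $V=\ttt$ gives $T\times_K\A_K^2\simeq_{W}T\times_K\ttt$, while applying it with base $\ttt$ and $V=\ttt$ gives $\ttt\times_K\A_K^2\simeq_{W}\ttt\times_K\ttt$. Chaining these reduces the proposition to the single assertion
\[
T\times_K\ttt \;\simeq_{W}\; \ttt\times_K\ttt,
\]
that is, to linearizing the torus $T$ in a $W$-equivariant way in the presence of one auxiliary, generically free copy of the reflection representation.

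To establish this core statement I would adapt the explicit construction of Section \ref{sec:4}. There the same rank-$2$ torus $T$ of type $\AA_2$ was linearized $(\SS_3,\Gamma)$-equivariantly by passing to $\bbP(\ttt)\times\bbP(\ttt)$, embedding it via the Segre map as a quadric, and projecting stereographically from a suitably invariant rational point onto $\ttt$. The present situation is structurally parallel: the $\SS_3$-part of the action is identical, and the honest involution $\ve\in\SS_2$ acting by inversion $t\mapsto t^{-1}$ plays exactly the role of the Galois-semilinear inversion $\gamma$ there. The function of the auxiliary factor $\ttt$ is to enlarge the ambient projective space so that a $W$-fixed rational centre for the stereographic projection becomes available. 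Alternatively one may recast the core as a lattice computation: the displayed isomorphism holds precisely when the $W$-lattice $X^{*}(T)\oplus\Z^2$ is quasi-permutation, which can be verified by writing down an explicit exact sequence of $W$-lattices with permutation terms.

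The main obstacle is exactly this core step, and it is where the distinction between Cayley and stably Cayley for $\GG_2$ resides: the inversion action of $\SS_2$ obstructs the existence of a $W$-fixed rational centre on the quadric in the un-stabilized problem, which is why the split group $\GG_{2,K}$ itself is not Cayley. The content of the argument is that after adjoining the two affine coordinates — equivalently, after crossing with the generically free representation $\ttt$ — such a centre does become available, so that the projection can be run $W$-equivariantly and the obstruction disappears. Checking that the resulting composite is genuinely $W$-equivariant and birational, and in particular verifying the $\SS_2$-equivariance at every stage, is the delicate part of the computation.
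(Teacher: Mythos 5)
Your framework is fine as far as it goes: the no-name reductions are valid (both $T$ and $\ttt$ carry faithful, hence generically free, actions of the finite group $W=\SS_3\times\SS_2$), and they correctly show that the proposition is equivalent to the single assertion $T\times_K\ttt\simeq_W\ttt\times_K\ttt$. But this is only a reformulation: by the very same no-name lemma one has $\ttt\times_K\ttt\simeq_W\ttt\times_K\A_K^2$ and $T\times_K\ttt\simeq_W T\times_K\G_{m,K}^2$, so the "core statement" is just the proposition again and you have gone in a circle. The entire content lies in actually constructing the birational map, and your proposal stops exactly there: "adapt the explicit construction of Section~\ref{sec:4}" is not carried out, and it cannot be carried out verbatim. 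In Section~\ref{sec:4} the decisive step is the stereographic projection of the quadric $Q\subset\P(\Lt\otimes_L\Lt)$ from the fixed point $[D_{21}]$, and that point is fixed precisely because $\gamma$ acts \emph{semilinearly}. For the honest involution $\ve\colon t\mapsto t^{-1}$ the analogous quadric has no $W$-fixed rational centre --- as you yourself note, this is exactly why $\GG_{2,K}$ is not Cayley --- and you give no mechanism by which the external factor $\ttt$ (glued on as a product, not as an enlargement of the ambient $\P^3$ containing $Q$) produces one. The sentence "such a centre does become available" is an assertion of the theorem, not a proof of it.

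The proposed lattice-theoretic "alternative" does not close the gap either: quasi-permutation of $X^{*}(T)\oplus\Z^2$ is equivalent only to \emph{stable} $W$-linearizability, i.e.\ to $T\times_K\A_K^{N}\simeq_W\ttt\times_K\A_K^{N}$ for \emph{some} $N$, and neither is equivalent to nor implies the statement with $N=2$ --- which is the whole point of Theorem~\ref{thm:2}. For comparison, the paper does not reprove this proposition at all: it cites the explicit construction in the proof of Proposition~9.11 of \cite{LPR} and merely observes that the construction, though stated there over an algebraically closed field, uses nothing beyond $\charact K\neq 2,3$. A self-contained argument would have to reproduce that construction (or exhibit some other explicit $W$-equivariant birational map); your proposal contains neither.
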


\begin{proof}
This is proved in \cite{LPR} in the proof of Proposition 9.11.
The authors assume that $K$ is an algebraically closed field of characteristic 0,
but the proof goes through for any field $K$  of characteristic $\neq 2,3$.
\end{proof}

\begin{proof}[Proof of Proposition \ref{prop:G2}]
By Proposition \ref{prop:cor6.5}, our proposition follows from Proposition \ref{prop:G2-torus}.
\end{proof}

\begin{corollary}\label{cor:SL3-torus}
The $K$-varieties $T\times_K \G_{m,K}^2$ and $\ttt\times_K \A_K^2$ of Proposition \ref{prop:G2-torus}
are $\SS_3$-equivariantly birationally isomorphic over $K$ (with respect to the standard embedding $\Sym_3\into\Sym_3\times\Sym_2$).
\end{corollary}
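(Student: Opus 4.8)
The plan is to deduce this immediately from Proposition \ref{prop:G2-torus} by restricting the group action along a subgroup inclusion; the corollary carries no content beyond that proposition. First I would record that the standard embedding $\SS_3\into\SS_3\times\SS_2$ identifies $\SS_3$ with the subgroup $\SS_3\times\{1\}$ consisting of the pairs $(\sigma,1)$ with $\sigma\in\SS_3$ and $1\in\SS_2$ the identity. By the construction of the actions in \S\,\ref{subsec:G2}, the $\SS_3$-action on each of $T\times_K\G_{m,K}^2$ and $\ttt\times_K\A_K^2$ obtained via this embedding is nothing but the restriction, along the inclusion $\SS_3\times\{1\}\hookrightarrow\SS_3\times\SS_2$, of the $\SS_3\times\SS_2$-action already in play.

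Second I would invoke the elementary principle that an equivariant rational map stays equivariant after restriction to a subgroup: if $f\colon X\birat Y$ is a $G$-equivariant birational isomorphism of $G$-varieties and $H\le G$ is any subgroup, then $f$ is automatically $H$-equivariant, since the relation $f\circ g=g\circ f$ (as rational maps) holds for every $g\in G$, in particular for every $g\in H$. Applying this with $G=\SS_3\times\SS_2$, with $H=\SS_3\times\{1\}$, and with $f$ the $\SS_3\times\SS_2$-equivariant birational isomorphism $T\times_K\G_{m,K}^2\birat\ttt\times_K\A_K^2$ supplied by Proposition \ref{prop:G2-torus}, yields the desired $\SS_3$-equivariant birational isomorphism.

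I do not expect any genuine obstacle: all the work is already contained in Proposition \ref{prop:G2-torus} (equivalently, in the proof of Proposition 9.11 of \cite{LPR}), and the corollary merely extracts its consequence for the smaller Weyl group $\SS_3$. The only point requiring a moment's care is the bookkeeping of the first paragraph, namely checking that the $\SS_3$-action defined directly via the standard embedding literally coincides with the restriction of the $\SS_3\times\SS_2$-action; this is immediate from the formulas in \S\,\ref{subsec:G2}. I would emphasize that this corollary is exactly the torus-level statement needed to settle the rank-$2$ case $G=\SL_{3,K}$ of Theorem \ref{thm:2}: the torus $T$ here is the standard maximal torus of $\SL_{3,K}$ with its Weyl group $\SS_3$ acting by permutation of coordinates, so $T\times_K\G_{m,K}^2$ is a maximal torus of $\SL_{3,K}\times_K\G_{m,K}^2$ with Weyl group $\SS_3$ (acting trivially on the $\G_{m,K}^2$ factor). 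Hence the corollary together with Proposition \ref{prop:cor6.5} shows that $\SL_{3,K}\times_K\G_{m,K}^2$ is Cayley.
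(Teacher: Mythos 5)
Your argument is exactly the paper's: the paper's one-line proof observes that the $\SS_3\times\SS_2$-equivariant birational isomorphism of Proposition \ref{prop:G2-torus} is in particular $\SS_3$-equivariant via the standard embedding, which is precisely your restriction-to-a-subgroup principle. The proposal is correct and merely spells out the same (immediate) verification in more detail.
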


\begin{proof}
The $\SS_3\times\SS_2$-equivariant birational isomorphism  of Proposition \ref{prop:G2-torus} is, in particular,
$\SS_3$-equivariant.
\end{proof}

\begin{proposition}\label{prop:SL3}
For any field $K$ of characteristic 0, the $K$-group $\gSL_{3,K}\times_K \G_{m,K}^2$ is Cayley.
\end{proposition}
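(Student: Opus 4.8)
The plan is to deduce the statement directly from the Cayley criterion of Proposition \ref{prop:cor6.5} together with the birational isomorphism already produced in Corollary \ref{cor:SL3-torus}. First I would invoke Proposition \ref{prop:cor6.5}: since $K$ has characteristic $0$ and $\gSL_{3,K}\times_K\G_{m,K}^2$ is reductive, this group is Cayley if and only if some maximal $K$-torus is $W$-equivariantly birationally isomorphic over $K$ to its Lie algebra, where $W$ is the associated Weyl group regarded as an algebraic $K$-group.

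Next I would take as maximal torus the torus $T\times_K\G_{m,K}^2$, with $T=\{(x_1,x_2,x_3)\in\G_{m,K}^3\mid x_1x_2x_3=1\}$ the standard diagonal maximal torus of $\gSL_{3,K}$, exactly as in \S\,\ref{subsec:G2}. The Weyl group of a direct product is the product of the Weyl groups, and the split torus $\G_{m,K}^2$ has trivial Weyl group; hence $W=\SS_3$, acting on $T$ by the permutation action of \S\,\ref{subsec:G2} and trivially on the factor $\G_{m,K}^2$. Differentiating, $\Lie(T\times_K\G_{m,K}^2)=\ttt\times_K\A_K^2$, on which $\SS_3$ acts by the induced permutation action on $\ttt=\Lie(T)$ and trivially on $\A_K^2=\Lie(\G_{m,K}^2)$.

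Finally I would appeal to Corollary \ref{cor:SL3-torus}, which states precisely that $T\times_K\G_{m,K}^2$ and $\ttt\times_K\A_K^2$ are $\SS_3$-equivariantly birationally isomorphic over $K$ for the standard embedding $\SS_3\into\SS_3\times\SS_2$, that is, for exactly the $\SS_3$-action described in the previous step. By Proposition \ref{prop:cor6.5}, this birational isomorphism proves that $\gSL_{3,K}\times_K\G_{m,K}^2$ is Cayley.

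The single point I would verify with care, and the only place any obstacle could arise, is that the $\SS_3$-action arising as the genuine Weyl group of $\gSL_{3,K}$ matches the restricted action of Corollary \ref{cor:SL3-torus}: in both, $\SS_3$ permutes the coordinates $(x_1,x_2,x_3)$ of $T$ (and acts correspondingly on $\ttt$) while fixing the two extra $\G_m$-factors pointwise. I expect this to be immediate, since all the genuine difficulty has already been absorbed into Corollary \ref{cor:SL3-torus}, which in turn rests on the $\GG_2$-calculation of \cite{LPR} through Proposition \ref{prop:G2-torus}; no further computation should be required.
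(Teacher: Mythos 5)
Your proposal is correct and follows exactly the paper's argument: take $T\times_K\G_{m,K}^2$ as the maximal torus of $\gSL_{3,K}\times_K\G_{m,K}^2$ with Weyl group $\SS_3$, and combine Corollary \ref{cor:SL3-torus} with the criterion of Proposition \ref{prop:cor6.5}. The extra care you take in checking that the Weyl-group action matches the restricted $\SS_3$-action of the corollary is the same (implicit) verification the paper relies on.
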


\begin{proof}
We regard $T$ as a split maximal torus of $\gSL_{3,K}$ and $\SS_3$ as the corresponding Weyl group,
then  $T\times_K\, \G_{m,K}^2$ is a maximal torus of  $\gSL_{3,K}\times_K\, \G_{m,K}^2$.
Now by Proposition \ref{prop:cor6.5}, our proposition follows from Corollary \ref{cor:SL3-torus}.
\end{proof}

\begin{subsec}
Let $T$ be the $\SS_3\times\SS_2$-torus over $K$ of Section \ref{subsec:G2}.
Let $L/K$ be an arbitrary quadratic extension. Write $\Gamma=\Gal(L/K)=\{1,\gamma\}$.
Define a cocycle (homomorphism)
$$
c\colon\Gamma\to\SS_3\times\SS_2
$$
taking $\gamma$ to the nontrivial element $\ve\in \SS_2$.
We obtain a twisted torus $_c T$.
Let $_cT_L$ denote the corresponding $(\SS_3\times\SS_2,\Gamma)$-variety over $L$,
it is $T_L:=T\times_K L$ with the following actions:
\begin{align}
&\sigma(x_1, x_2, x_3):= (x_{\sigma^{-1}(1)}, x_{\sigma^{-1}(2)}, x_{\sigma^{-1}(3)}) \quad\text{for}\hskip 2mm \sigma\in\Sym_3,\\
&\ve(x_1,x_2,x_3)=(x_1^{-1},x_2^{-1},x_3^{-1}),\\
&^\gamma(x_1,x_2,x_3)=(\ ^\gamma x_1^{-1},\ ^\gamma x_2^{-1},\ ^\gamma x_3^{-1}).\label{eq:Gamma-tw}
\end{align}
Note that $_c(\SS_3\times\SS_2)=\SS_3\times\SS_2$, because $c(\gamma)=\ve$ is central in $\SS_3\times\SS_2$.
\end{subsec}

\begin{proposition}\label{prop:S3S2}
There exists a birational $(\SS_3\times\SS_2,\Gamma)$-isomorphism between the  $(\SS_3\times\SS_2,\Gamma)$-varieties
$_c T_L\times_L\G_{m,L}^2$ and $\Lie(_c T_L)\times_L\A_L^2$.
\end{proposition}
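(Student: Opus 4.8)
The plan is to obtain the isomorphism by \emph{twisting} the $\SS_3\times\SS_2$-equivariant birational isomorphism furnished by Proposition~\ref{prop:G2-torus}, applying Lemma~\ref{lem5.4(a)}, and then reinterpreting the outcome in the language of $(\SS_3\times\SS_2,\Gamma)$-varieties over $L$ by means of the base-change equivalence of \S\,\ref{subsec:W-Gamma}.

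First I would set $G=T\times_K\G_{m,K}^2$, a $K$-torus and hence a reductive $K$-group, and let $M=\SS_3\times\SS_2$ act on $G$ via the Weyl-group action on $T$ from \S\,\ref{subsec:G2} together with the trivial action on the central factor $\G_{m,K}^2$. Then $\Lie(G)=\ttt\times_K\A_K^2$ with the induced $M$-action, and Proposition~\ref{prop:G2-torus} provides an $M$-equivariant birational isomorphism $f\colon G\dasheq\Lie(G)$ over $K$; this is exactly the data required to apply Lemma~\ref{lem5.4(a)}.

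Next I would apply Lemma~\ref{lem5.4(a)} to $f$ and the cocycle $c\colon\Gamma\to M(L)$, $\gamma\mapsto\ve$, obtaining a ${}_cM$-equivariant birational isomorphism ${}_cf\colon{}_cG\dasheq\Lie({}_cG)$ over $K$. Since $c(\gamma)=\ve$ is central in $\SS_3\times\SS_2$, the inner twist ${}_cM$ is again $\SS_3\times\SS_2$; and since $\ve$ acts by inversion on $T$ but trivially on $\G_{m,K}^2$, the twist touches only the first factor, so that ${}_cG={}_cT\times_K\G_{m,K}^2$ and $\Lie({}_cG)=\Lie({}_cT)\times_K\A_K^2$. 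Thus ${}_cf$ is an $(\SS_3\times\SS_2)$-equivariant birational isomorphism ${}_cT\times_K\G_{m,K}^2\dasheq\Lie({}_cT)\times_K\A_K^2$ over $K$.

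Finally, base-changing to $L$ and using the fully faithful functor of \S\,\ref{subsec:W-Gamma}, the isomorphism ${}_cf$ corresponds to an $(\SS_3\times\SS_2,\Gamma)$-equivariant birational isomorphism over $L$ between ${}_cT_L\times_L\G_{m,L}^2$ and $\Lie({}_cT_L)\times_L\A_L^2$, where ${}_cT_L={}_cT\times_K L$ is precisely the $(\SS_3\times\SS_2,\Gamma)$-variety carrying the semilinear $\gamma$-action recorded in \eqref{eq:Gamma-tw}. This is the desired statement. I do not expect a substantive geometric obstacle here, since the genuine work is already contained in Proposition~\ref{prop:G2-torus} and Lemma~\ref{lem5.4(a)}; the only point demanding care is the bookkeeping of the twist, namely checking that centrality of $\ve$ leaves both $M$ and the factors $\G_m^2$, $\A^2$ unchanged and that the base change of ${}_cT$ reproduces exactly the twisted $\Gamma$-action of \eqref{eq:Gamma-tw}.
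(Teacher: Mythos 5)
Your proposal is correct and follows exactly the paper's route: the paper's proof of Proposition~\ref{prop:S3S2} consists precisely of invoking Proposition~\ref{prop:LPR-G2} together with Lemma~\ref{lem5.4(a)}, and your bookkeeping of the twist (centrality of $\ve$, invariance of the $\G_m^2$ and $\A^2$ factors, and the identification of ${}_cT\times_K L$ with the $\Gamma$-action \eqref{eq:Gamma-tw}) simply makes explicit what the paper leaves to the reader.
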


\begin{proof}
This follows from Proposition \ref{prop:LPR-G2} and Lemma \ref{lem5.4(a)}.
\end{proof}

\begin{subsec}
We define two embeddings $\SS_3\into\SS_3\times\SS_2$, the standard one and the twisted one:
\begin{align*}
&\St(\sigma)=(\sigma,1) \text{ for } \sigma\in \SS_3,\\
&\Tw(\sigma)=(\sigma,\ve^{\sign(\sigma)})=
\begin{cases}
(\sigma,1)&
 \text{if \ $\sign(\sigma) =1$, } \\
(\sigma,\ve) &
\text{if \ $\sign(\sigma) =-1$.}
\end{cases}
\end{align*}
These two embeddings define two $\SS_3$-actions on $_c T_L$.
We denote the corresponding $(\SS_3,\Gamma)$-varieties (with the twisted $\Gamma$-action \eqref{eq:Gamma-tw})
by $_\St T'_L$ and $_\Tw T'_L$, respectively.
\end{subsec}

\begin{corollary}\label{cor:St-Tw}
There exist birational $(\SS_3,\Gamma)$-isomorphisms
$$
_\St T'_L\times_L \G_{m,L}^2\dasheq\Lie(_\St T'_L)\times_L\A_L^2 \text{ and }
_\Tw T'_L\times_L \G_{m,L}^2\dasheq\Lie(_\Tw T'_L)\times_L\A_L^2 .
$$
\end{corollary}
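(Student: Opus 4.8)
The plan is to obtain both birational isomorphisms simultaneously, by restricting the single $(\SS_3\times\SS_2,\Gamma)$-equivariant birational isomorphism furnished by Proposition \ref{prop:S3S2} along the two group embeddings $\St,\Tw\colon\SS_3\into\SS_3\times\SS_2$. Concretely, Proposition \ref{prop:S3S2} supplies a birational isomorphism
$$
f\colon {}_cT_L\times_L\G_{m,L}^2\dasheq\Lie({}_cT_L)\times_L\A_L^2
$$
over $L$ that commutes with the semilinear $\Gamma$-action \eqref{eq:Gamma-tw} and with the full $\SS_3\times\SS_2$-action on both sides. The key observation is the trivial one that equivariance under a group forces equivariance under every subgroup, read through any injective homomorphism into that group, so nothing new needs to be constructed.

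First I would record that $\St$ and $\Tw$ are genuine group homomorphisms $\SS_3\into\SS_3\times\SS_2$: for $\St$ this is clear, and for $\Tw$ it is exactly the statement that $\sigma\mapsto\ve^{\sign(\sigma)}$ is the sign character $\SS_3\to\SS_2$, so that $\Tw$ is the graph of a homomorphism and hence an embedding. Their images are therefore subgroups of $\SS_3\times\SS_2$ isomorphic to $\SS_3$. By definition the $\SS_3$-variety $_\St T'_L$ (resp. $_\Tw T'_L$) is the variety $_cT_L$ equipped with the $\SS_3$-action $\sigma\mapsto\St(\sigma)$ (resp. $\sigma\mapsto\Tw(\sigma)$), and likewise on the Lie algebra side; the factors $\G_{m,L}^2$ and $\A_L^2$ carry the trivial $\SS_3\times\SS_2$-action, hence also the trivial $\SS_3$-action after restriction, so they are unchanged under either embedding. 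Thus the source and target appearing in the corollary are literally the source and target of $f$, merely viewed with the $\SS_3$-action restricted through $\St$ or $\Tw$.

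Then I would simply restrict. Since $f\circ g=g\circ f$ as rational maps for every $g\in\SS_3\times\SS_2$, taking $g=\St(\sigma)$ (resp. $g=\Tw(\sigma)$) for $\sigma\in\SS_3$ shows that $f$ is $\SS_3$-equivariant for the $\St$-action (resp. the $\Tw$-action); it remains $\Gamma$-equivariant because the semilinear action \eqref{eq:Gamma-tw} is the same in all cases. Hence the one map $f$ already yields both asserted birational $(\SS_3,\Gamma)$-isomorphisms. I expect no genuine obstacle here: the entire content sits in Proposition \ref{prop:S3S2}, and the corollary is a formal consequence of the principle that an equivariant map for a group stays equivariant for any subgroup. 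The only point deserving a line of care is the bookkeeping that identifies $_\St T'_L$ and $_\Tw T'_L$ (and their Lie algebras) with $_cT_L$ under the two embeddings, which is immediate from the definitions of $\St$ and $\Tw$.
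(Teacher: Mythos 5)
Your proposal is correct and is exactly the paper's argument: the paper proves this corollary by observing that the $(\SS_3\times\SS_2,\Gamma)$-equivariant birational isomorphism of Proposition \ref{prop:S3S2} is in particular $(\SS_3,\Gamma)$-equivariant with respect to each of the embeddings $\St$ and $\Tw$. Your additional bookkeeping (checking that $\Tw$ is a homomorphism and that the underlying varieties are unchanged) is sound but just makes explicit what the paper leaves implicit.
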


\begin{proof}
The $(\SS_3\times\SS_2,\Gamma)$-equivariant birational isomorphism of Proposition \ref{prop:S3S2}
is, in particular, $(\SS_3,\Gamma)$-equivariant with respect to each of the two embeddings $\St,\Tw\colon \SS_3\into\SS_3\times\SS_2$.
\end{proof}

\begin{subsec}\label{subsec:L,K,H}
Let $L/K$ be an arbitrary quadratic extension of fields of characteristic 0.
Let $G=\SU(3, L/K,H):=\SU(L^3, H)$, the special unitary group of the $L/K$-Hermitian form with  matrix $H$,
where $H\in M_3(L)$ is a nondegenerate  Hermitian matrix.
Then $G$ is a simple $K$-group, an outer $L/K$-form of the split $K$-group $\SL_{3,K}$.
Note that  $G=\SU(3,L/K,H)$ is an {\em inner} form of the $K$-group
$\SU_{3,L/K}:=\SU(3,L/K, I_3)$, where $I_3=\diag(1,1,1)$.
\end{subsec}

\begin{proposition}\label{prop:SU3-Gm2}
Let a quadratic extension $L/K$ and a Hermitian matrix $H\in M_3(L)$ be as in \S\,\ref{subsec:L,K,H}.
Let  $G=\SU(3,L/K,H)$, then $G\times_K \G_{m,K}^2$ is Cayley.
\end{proposition}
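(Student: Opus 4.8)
The plan is to deduce the statement by chaining together the two reductions already set up in Sections \ref{sec:4} and \ref{sec:SL3}, so that it becomes the first birational isomorphism furnished by Corollary \ref{cor:St-Tw}.

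First I would perform the inner-form reduction. By \S\,\ref{subsec:L,K,H} the group $\SU(3,L/K,H)$ is an inner form of $\SU_{3,L/K}$, and forming the product with the split torus $\G_{m,K}^2$ does not disturb this: the twisting cocycle takes values in the adjoint group of the semisimple factor, which is unchanged by multiplying by a torus, so $\SU(3,L/K,H)\times_K\G_{m,K}^2$ is an inner form of $\SU_{3,L/K}\times_K\G_{m,K}^2$. Hence by Proposition \ref{rem2.4} it suffices to prove that $\SU_{3,L/K}\times_K\G_{m,K}^2$ is Cayley.

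Next I would apply the torus criterion. The product $T_\sS\times_K\G_{m,K}^2$ is a maximal torus of $\SU_{3,L/K}\times_K\G_{m,K}^2$, and its Weyl group is $W=\SS_3$ (the factor $\G_{m,K}^2$ contributes no Weyl group), acting on $T_\sS$ through the standard permutation action and trivially on $\G_{m,K}^2$. By Proposition \ref{prop:cor6.5} the group is Cayley if and only if there is an $\SS_3$-equivariant birational isomorphism $(T_\sS\times_K\G_{m,K}^2)\dasheq(\lt_\sS\times_K\A_K^2)$ over $K$, and by the base-change equivalence of \S\,\ref{subsec:W-Gamma} this is in turn equivalent to an $(\SS_3,\Gamma)$-equivariant birational isomorphism of the base changes to $L$. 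Here the crucial identification is that $T_\sS\times_K L$ is precisely the $(\SS_3,\Gamma)$-variety ${}_{\St}T'_L$: this is the identification $T_\sS\times_K L\simeq T'_L$ recorded in Section \ref{sec:4}, because under the standard embedding $\St(\sigma)=(\sigma,1)$ the $\SS_3$-action on ${}_{\St}T'_L$ is the pure permutation action and the twisted $\gamma$-action \eqref{eq:Gamma-tw} agrees with the $\gamma$-action defining $T'_L$. Since the extra split factor $\G_{m,K}^2$ base changes to $\G_{m,L}^2$ with its standard Galois action, the required isomorphism over $L$ is exactly
$$
{}_{\St}T'_L\times_L\G_{m,L}^2\dasheq\Lie({}_{\St}T'_L)\times_L\A_L^2,
$$
which is the first isomorphism of Corollary \ref{cor:St-Tw}. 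Assembling the three reductions then proves the proposition.

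I expect the main obstacle to be bookkeeping rather than a genuinely new idea. One must verify carefully that it is the standard embedding $\St$, and not the twisted embedding $\Tw$, that encodes the Weyl–torus datum of $\SU_{3,L/K}$, and that passing to the product with $\G_{m,K}^2$ neither enlarges the Weyl group nor introduces any twisting into the torus factor $\G_{m,K}^2$. Once these identifications are pinned down, the conclusion is immediate from Corollary \ref{cor:St-Tw}.
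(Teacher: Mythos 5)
Your proposal is correct and follows essentially the same route as the paper's own proof: reduce to $\SU_{3,L/K}$ via Proposition \ref{rem2.4}, identify the diagonal maximal torus (after base change to $L$) with ${}_{\St}T'_L$, and conclude from the first birational isomorphism of Corollary \ref{cor:St-Tw} together with Proposition \ref{prop:cor6.5}. The only difference is that you spell out the bookkeeping (the inner-form status of the product with $\G_{m,K}^2$, the Weyl group of the product, and the descent step from \S\,\ref{subsec:W-Gamma}) that the paper leaves implicit.
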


\begin{proof}
Since $G$ is an inner form of $\SU_3:=\SU(3,L/K, I_3)$, by Proposition \ref{rem2.4} it suffices to consider the case of $\SU_3$.
Let $T_{\SU_3}$ denote the diagonal maximal torus of $\SU_3$,
we can identify it with the torus $_\St T'_L$ of Corollary \ref{cor:St-Tw}.
Now our proposition follows from Corollary \ref{cor:St-Tw} and Proposition \ref{prop:cor6.5}.
\end{proof}

\begin{proposition}\label{prop:PGU3-Gm2}
Let a quadratic extension $L/K$ and a Hermitian matrix $H\in M_3(L)$ be as in \S\,\ref{subsec:L,K,H}.
Let $G=\gPGU(3,L/K,H)$ be the adjoint $K$-group corresponding to the simply connected $K$-group $\SU(3, L/K,H)$.
Then $G\times_K \G_{m,K}^2$ is Cayley.
\end{proposition}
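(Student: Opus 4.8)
The plan is to run the argument of Proposition~\ref{prop:SU3-Gm2} almost verbatim, \emph{except} that the relevant maximal torus of the adjoint group will correspond to the twisted embedding $\Tw$ rather than to the standard embedding $\St$. First, since $G=\gPGU(3,L/K,H)$ is an inner form of $\PGU_3:=\gPGU(3,L/K,I_3)$, the group $G\times_K\G_{m,K}^2$ is an inner form of $\PGU_3\times_K\G_{m,K}^2$, so by Proposition~\ref{rem2.4} it suffices to prove that $\PGU_3\times_K\G_{m,K}^2$ is Cayley. I would take $T_{\PGU_3}$ to be the image of the diagonal torus $T_{\SU_3}$ under the adjoint quotient map $\SU_3\to\PGU_3$; this is a maximal $K$-torus with Weyl group $\SS_3$, and $T_{\PGU_3}\times_K\G_{m,K}^2$ is a maximal torus of $\PGU_3\times_K\G_{m,K}^2$. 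By Proposition~\ref{prop:cor6.5} it is then enough to exhibit an $\SS_3$-equivariant birational isomorphism $T_{\PGU_3}\times_K\G_{m,K}^2\dasheq\Lie(T_{\PGU_3})\times_K\A_K^2$ over $K$, which by the dictionary of \S\ref{subsec:W-Gamma} is the same as producing an $(\SS_3,\Gamma)$-equivariant such isomorphism over $L$.

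Next I would pin down the $(\SS_3,\Gamma)$-variety attached to $T_{\PGU_3}$. Over $L$ this torus becomes the diagonal torus $\G_{m,L}^3/\G_{m,L}$ of $\PGL_{3,L}$, on which the Weyl group $\SS_3$ acts by the straight permutation $\sigma[x]=[x_{\sigma^{-1}(1)},x_{\sigma^{-1}(2)},x_{\sigma^{-1}(3)}]$ and $\gamma$ acts by ${}^\gamma[x]=[{}^\gamma x^{-1}]$. The isomorphism
$$
\G_{m,L}^3/\G_{m,L}\isoto T_L,\qquad [x_1,x_2,x_3]\mapsto(x_2/x_3,\ x_3/x_1,\ x_1/x_2)
$$
of \S\ref{3.2} carries this straight $\SS_3$-action into the sign-twisted action $\sigma\cdot y=(\sigma y)^{\sign\sigma}$ on $T_L$, and carries the Galois action into the formula \eqref{eq:Gamma-tw}; that is, it identifies $T_{\PGU_3}\times_K L$ with ${}_{\Tw}T'_L$, and correspondingly $\Lie(T_{\PGU_3})\times_K L$ with $\Lie({}_{\Tw}T'_L)$, compatibly with both actions. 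This is the only genuinely new point compared with the case of $\SU_3$: the very same map that turns the sign-twisted action on $\G_{m,L}^3/\G_{m,L}$ into the straight action on the simply connected torus ${}_{\St}T'_L$ turns the straight action on the adjoint torus into the twisted embedding $\Tw$.

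Finally, I would invoke the second $(\SS_3,\Gamma)$-equivariant birational isomorphism of Corollary~\ref{cor:St-Tw},
$$
{}_{\Tw}T'_L\times_L\G_{m,L}^2\dasheq\Lie({}_{\Tw}T'_L)\times_L\A_L^2 .
$$
Transporting it through the identification of the previous paragraph and descending via \S\ref{subsec:W-Gamma} yields the desired $\SS_3$-equivariant birational isomorphism $T_{\PGU_3}\times_K\G_{m,K}^2\dasheq\Lie(T_{\PGU_3})\times_K\A_K^2$ over $K$, and Proposition~\ref{prop:cor6.5} then shows that $\PGU_3\times_K\G_{m,K}^2$ is Cayley. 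The main obstacle is exactly the sign bookkeeping of the second paragraph: one must check that the Weyl-group action on the adjoint torus matches $\Tw$ and not $\St$, the discrepancy being precisely the factor $\sign\sigma$ introduced by the passage from the simply connected torus $\{x_1x_2x_3=1\}$ to the adjoint torus $\G_{m}^3/\G_{m}$.
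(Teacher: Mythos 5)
Your proposal is correct and follows essentially the same route as the paper's own proof: reduction to $\gPGU_3$ via Proposition~\ref{rem2.4}, identification of $T_{\PGU_3}\times_K L$ with $(\G_{m,L}^3/\G_{m,L})_{\Gamma\text{-twisted}}$ and then with ${}_\Tw T'_L$ via $[x_1,x_2,x_3]\mapsto(x_2/x_3,x_3/x_1,x_1/x_2)$, followed by the $\Tw$-half of Corollary~\ref{cor:St-Tw} and Proposition~\ref{prop:cor6.5}. Your sign bookkeeping (that the adjoint torus matches $\Tw$ rather than $\St$) is exactly the point the paper checks as well.
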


\begin{proof}
Since $G$ is an inner form of $\gPGU_3:=\gPGU(3,L/K,I_3)$, by Proposition \ref{rem2.4} it suffices to consider the case of $\gPGU_3$.
Let $T_{\PGU_3}\subset\PGU_3$ denote the image of the diagonal maximal torus of $\SU_3$,
we can identify the corresponding $L$-torus  $T_{\PGU_3}\times_K L$ with
the torus $(\G_{m,L}^3/\G_{m,L})_{\Gamma\text{-twisted}}$ endowed with the following actions of $\SS_3$ and $\Gamma$:
\begin{align*}
&\sigma([x_1, x_2, x_3]):= [x_{\sigma^{-1}(1)},\ x_{\sigma^{-1}(2)},\ x_{\sigma^{-1}(3)}] \quad\text{for}\hskip 2mm \sigma\in\Sym_3\\
&^\gamma[x_1, x_2,x_3]=[\ ^\gamma x_1^{-1},\ ^\gamma x_2^{-1},\ ^\gamma x_3^{-1}].
\end{align*}
We define a homomorphism  $\G_{m,L}^3/\G_{m,L}\to T_L$ by
$$
\quad [x_1,x_2,x_3]\mapsto (x_2/x_3, x_3/x_1, x_1/x_2).
$$
One checks immediately that we obtain an $(\SS_3,\Gamma)$-equivariant isomorphism
$$
(\G_{m,L}^3/\G_{m,L})_{\Gamma\text{-twisted}}\isoto\  _\Tw T'_L.
$$
and its differential, which is also  an $(\SS_3,\Gamma)$-equivariant isomorphism,
$$
\Lie\,(\G_{m,L}^3/\G_{m,L})_{\Gamma\text{-twisted}}\isoto\Lie\,_\Tw T'_L.
$$
By Corollary \ref{cor:St-Tw} there exists an $(\SS_3,\Gamma)$-equivariant birational isomorphism
$$
 _\Tw T'_L\times_L\G_{m,L}^2 \dasheq\Lie\,_\Tw T'_L\times_L\A_L^2.
$$
Combining these birational isomorphisms, we obtain an $(\SS_3,\Gamma)$-equivariant birational isomorphism
$$
(\G_{m,L}^3/\G_{m,L})_{\Gamma\text{-twisted}}\times_L\G_{m,L}^2\dasheq \Lie\ (\G_{m,L}^3/\G_{m,L})_{\Gamma\text{-twisted}}\times_L\A_{L}^2,
$$
that is, an $\SS_3$-equivariant birational isomorphism
$$T_{\PGU_3}\times_K\G_{m,K}^2 \dasheq\Lie(T_{\PGU_3})\times_K\A_K^2.$$
Now Proposition \ref{prop:PGU3-Gm2} follows from Proposition \ref{prop:cor6.5}.
\end{proof}

\begin{proof}[Proof of Theorem \ref{thm:2}]
If $G$ is of absolute rank 1, then by Proposition \ref{pr1} the group $G$ is Cayley
(and hence, the group $G\times_K \G_{m,K}^2$ is Cayley).
Now assume that $G$ is of absolute rank 2.
If $G$ is not semisimple, or  is of type $\AA_1\times\AA_1$, or is of type $\BB_2=\CC_2$,
then by Propositions \ref{pr2}, \ref{pr3}, and \ref{pr4} the group $G$ is Cayley, hence the group $G\times_K \G_{m,K}^2$ is Cayley.
Otherwise $G$ is of type $\GG_2$ or $\AA_2$, and by Example \ref{ex2} and Propositions
 \ref{prop:G2}, \ref{prop:SL3},
 \ref{prop:SU3-Gm2}, and  \ref{prop:PGU3-Gm2}  the group $G\times_K \G_{m,K}^2$ is Cayley.
\end{proof}

\bigskip\bigskip\bigskip\bigskip\bigskip

\begin{center}
{ \bf Appendix A\\ \bigskip \ ELEMENTARY LINKS}
\bigskip

{\bf by Igor Dolgachev}

\end{center}
\bigskip

In this appendix we will follow the ideas from Iskovskikh's papers \cite{Isk1}, \cite{Isk2}, \cite{Isk3} to study the Cayley property of the
groups $\SU_3,\ \PSU_3$  and $\SL_3$ over $\bbR$.

\section{Elementary links for $G$-surfaces}
Let $X$ be a smooth projective surface over a perfect field $K$ and $G$ be a finite group of $K$-automorphisms of $X$.
We say that the pair $(X,G)$ is a $G$-surface. Two $G$-surfaces $(X,G)$ and $(X',G)$ are called birationally (biregularly) isomorphic
if there exists a birational (biregular) $G$-equivariant
map $\phi:X\da X'$ defined over $K$.
A $G$-surface $(X,G)$ is called \emph{minimal} if any birational $G$-equivariant morphism $X\to X'$ is an isomorphism.
Any birational $G$-map between two $G$-surfaces can be factored into a sequence of birational $G$-morphisms and their inverses.
A birational $G$-morphism $f:X\to Y$ is isomorphic to the blow-up of a closed $G$-invariant $0$-dimensional subscheme $\fraka$ of $Y$.
For the future use let us remind that the degree of $\fraka$ is the number $\deg(\fraka) = h^0(\calO_\fraka)$.
If $\fraka$ is reduced and consists of closed points
$y_1,\ldots,y_k$ with residue fields $\kappa(y_i)$, then $\deg(\fraka) = \sum \deg(y_i)$, where $\deg(y_i) = [\kappa(x_i):K]$.
The $G$-invariance of $\fraka$ means that $\fraka$ is the union of $G$-orbits.

The birational classification of $G$-surfaces over $K$ is equivalent to the classification of minimal $G$-surfaces up to birational isomorphisms.

From now on we assume that $X$ is a rational surface, i.e. after a finite base change $L/K$, the surface is birationally isomorphic  to $\bbP_L^2$.
It is known (see \cite{Isk1}) that a minimal rational surface belongs to one of the following two classes:
\begin{itemize}
\item[($\calD$)] $X$ is a del Pezzo surface with $\Pic(X)^G \cong \bbZ$;
\item[($\calC$)] $X$ is a conic bundle with $\Pic(X)^G\cong \bbZ^2$.
\end{itemize}

Recall that $X$ is called a \emph{del Pezzo surface} if the anti-canonical sheaf $\omega_X^{-1}$ is ample.
The self-intersection number $(\omega_X,\omega_X)$ takes its value between $1$ and $9$ and is called the \emph{degree} of a del Pezzo surface.  Also  $X$ is a called a
\emph{conic bundle} if there exists a $K$-morphism  $f:X\to C$ such that each fiber is reduced and is isomorphic to a conic over $K$ (maybe reducible).

In the case when $K$ is an algebraically closed field, the problem of birational classification of minimal $G$-surfaces
is equivalent to the problem of classification of conjugacy classes of finite subgroups of the Cremona group $\textup{Cr}_K(2)$
of birational automorphisms of $\bbP_K^2$.  We refer to \cite{DolIsk} for  the results in this direction.
When $G= \{1\}$, the problem of classification of rational $K$-surfaces
has been addressed in fundamental works of V.A.~Iskovskikh \cite{Isk} and Yu.I.~Manin \cite{Manin}.
In both cases a modern approach uses the theory of elementary links \cite{Isk1}.

We will be dealing with minimal del Pezzo $G$-surfaces or minimal conic bundles $G$-surfaces.
In the $G$-equivariant version of the Mori theory they are interpreted as extremal contractions $\phi:S\to C$, where $C = \textup{pt}$
is a point in the first case and $C$ is a curve in the second case. They are also two-dimensional analogs of rational Mori $G$-fibrations.

A birational $G$-map between Mori fibrations is a diagram of  $G$-equivariant rational $K$-maps
\begin{equation}\label{link1}
\xymatrix{S{}\ar[d]_\phi\ar@{-->}[r]^f&S'\ar[d]_{\phi'}\\
C&C'}
\end{equation}
which in general do not commute with the fibrations.
Such a map is decomposed into \emph{elementary links}.
These links are divided into the four  following types.

\begin{itemize}
\item Links of type I:
\end{itemize}
They are commutative diagrams of the form
\begin{equation}\label{link2}
\xymatrix{S{}\ar[d]_\phi&Z=S'\ar[d]_{\phi'}\ar[l]_\sigma\\
C=\textup{pt}&C'= \bbP^1\ar[l]_\alpha}
\end{equation}
Here $\sigma:Z\to S$ is a  blow-up of a closed $G$-invariant $0$-dimensional subscheme $G$-orbit, $S$ is a minimal Del Pezzo surface,
$\phi':S'\to \bbP^1$ is a minimal conic bundle, $\alpha$ is the constant map.
For example, the blow-up of a $G$-fixed $K$-rational point on $\bbP^2$ defines a minimal conic $G$-bundle
$\phi':\bfF_1\to \bbP^1$ with a $G$-invariant exceptional section. Here and in the sequel we denote by
$\bfF_n$ a $K$-surface which becomes isomorphic over the algebraic closure of $K$ to a minimal ruled surface
$\bbP(\calO_{\bbP_K^1}\oplus \calO_{\bbP_K^1}(-n)).$

\begin{itemize}
\item Links of type II:
\end{itemize}
They are commutative diagrams of the form
\begin{equation}\label{link3}
\xymatrix{S{}\ar[d]_\phi&Z\ar[l]_\sigma\ar[r]^\tau&S'\ar[d]_{\phi'}\\
C&=&C'}
\end{equation}
Here $\sigma:Z\to S, \tau:Z\to S'$ are the blow-ups of $G$-invariant closed $0$-dimensional subschemes such that
$\rank~\Pic(Z)^G = \rank~\Pic(S)^G+1 = \rank~\Pic(S')^G+1$, $C=C' $ is either a point or a curve. An example of a link of type II is
the a link between $\bbP^2$ and $\bbF_0$ where  one blows up a $G$-invariant closed subscheme $\fraka$ of $\bbP_K^2$ of degree 2
and then blows down the proper transform of the line spanned by $\fraka$. Another frequently used link of type II
is an elementary transformation of minimal ruled surfaces and conic bundles.

\begin{itemize}
\item Links of type III:
\end{itemize}
These are the birational maps which are the inverses of links of type I.

\begin{itemize}
\item Links of type IV:
\end{itemize}
They exist when $S$ has two different structures of $G$-equivariant conic bundles. The link is the exchange of the two conic bundle structures
\begin{equation}\label{link4}
\xymatrix{S{}\ar[d]_\phi&=&S'\ar[d]_{\phi'}\\
C&&C'}
\end{equation}

\begin{theorem}\label{fact}
Let $f:S\dashrightarrow S'$ be a birational map of minimal $G$-surfaces.
Then $f$  is equal to a composition of $G$-equivariant elementary links.
\end{theorem}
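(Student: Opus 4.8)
The plan is to run the two-dimensional $G$-equivariant Sarkisov program in the form given by Iskovskikh \cite{Isk1}; the only novelty over the classical (non-equivariant) case is that every center, blow-up and contraction is taken $G$-invariantly, so that all the maps produced are automatically $G$-equivariant. First I would resolve the indeterminacy of $\chi$ (here $f=\chi$): since $S$ and $S'$ are smooth projective $G$-surfaces over the perfect field $K$, there is a smooth projective $G$-surface $W$ with birational $G$-morphisms $\sigma\colon W\to S$ and $\tau\colon W\to S'$ such that $\chi=\tau\circ\sigma^{-1}$, and, as recalled above, each of $\sigma,\tau$ factors into blow-ups of $G$-invariant reduced $0$-dimensional subschemes, i.e. of $G$-orbits of closed points. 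These blow-ups and their inverses are the raw material from which the elementary links are assembled.

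Next I would attach a numerical complexity to $\chi$. Fix a $G$-invariant very ample complete linear system $|\mathcal{H}'|$ on $S'$ (for instance a suitable multiple of $\omega_{S'}^{-1}$, averaged to be $G$-invariant), and let $\mathcal{H}$ be its strict transform on $S$, a movable $G$-invariant linear system. To $(S\to C,\ \mathcal{H})$ I associate the pair $(\mu,\lambda)$, where $\mu$ is the threshold characterised by $\mathcal{H}+\mu\,\omega_S$ being numerically trivial along the fibres of $\phi$ (so $\mathcal{H}\equiv-\mu\,\omega_S$ when $C$ is a point), and $\lambda=c(S,\mathcal{H})$ is the canonical threshold of the pair $(S,\mathcal{H})$. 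The Noether--Fano--Iskovskikh inequality then provides the crucial dichotomy: either $\chi$ is already a biregular $G$-isomorphism of Mori fibrations, or there is a $G$-invariant \emph{maximal center} --- a $G$-orbit of base points of $\mathcal{H}$ forcing $(S,\tfrac1\mu\mathcal{H})$ to be non-canonical ($\tfrac1\mu>\lambda$), or the generic fibre of $\phi$ when a nef-threshold comparison fails. That the center may be taken $G$-invariant is automatic, since the invariants are $G$-invariant and the locus realizing them is a union of $G$-orbits.

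I would then \emph{untwist}: a maximal center determines, through the two-ray game on a $G$-equivariant extraction of that center, an elementary link $S\dashrightarrow S_1$ of one of the four types I--IV described above; since a minimal rational $G$-surface is either a del Pezzo surface with $\Pic(S)^G\cong\Z$ or a conic bundle with $\Pic(S)^G\cong\Z^2$, these four types exhaust the possibilities. Composing $\chi$ with the inverse of this link produces $\chi_1\colon S_1\dashrightarrow S'$ of strictly smaller degree $(\mu,\lambda)$. Iterating, after finitely many links the degree reaches its minimum, the residual map is a biregular $G$-isomorphism, and the links produced along the way exhibit $\chi$ as the required composition of $G$-equivariant elementary links.

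The technical heart --- and the main obstacle --- is the untwisting step together with termination: one must show that each maximal center is resolved by exactly one admissible link and that the link strictly drops the degree, and that no infinite chain of such links occurs. In dimension two both rest on the explicit classification of maximal singularities on del Pezzo and conic-bundle $G$-surfaces and on controlling the process by the complexity of the common resolution $W$; $G$-equivariance is preserved throughout only because every extraction, contraction and fibre comparison is carried out on $G$-invariant data.
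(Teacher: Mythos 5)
Your proposal is correct and follows essentially the same route as the paper, which simply defers to Iskovskikh's factorization theorem (the two-dimensional $G$-equivariant Sarkisov program of \cite{Isk1}, carried out verbatim as in the arithmetic case of \cite{Isk2}, Theorem 2.5); your sketch of resolving indeterminacy, extracting a $G$-invariant maximal center via the Noether--Fano--Iskovskikh inequality, untwisting by a link of type I--IV, and terminating by descent on the Sarkisov degree is exactly the content of that reference.
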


The proof of this theorem is the same as in the arithmetic case considered in \cite{Isk2}, Theorem 2.5.

To start an elementary link, one has to blow up a $G$-invariant subscheme of maximal multiplicity of a linear system defining the birational map.

The classification of possible elementary links can be found in \cite{Isk1}. It is stated in the case $G=\{1\}$, however it can be extended
to the general case in a straightforward fashion. The case when $G\ne \{1\}$ but $K$ is algebraically closed is considered in \cite{DolIsk}, 7.2.

\begin{example}\label{ex1} Assume  $X$ is a del Pezzo surface $\calD_6$ of degree $6$ and $X' = \bbP_K^2$.
We want to decompose a birational $G$-equivariant map $X\da X'$ into a composition of elementary links.
From Propositions 7.12 and 7.13 in \cite{DolIsk} we obtain that the only elementary link starting at $(X,G)$ ends
either at a del Pezzo surface $Y$ of degree 6 or at $\bfF_0$. Since we do not want to stay on some
$(\calD_6,G)$,  we may assume that $Y = \bfF_0$. Now we need an elementary link starting at $Y$. The same Propositions tell us that the end
of the next elementary link is either a conic bundle $Y'\to C$, or $\bfF_0$, or $\bbP_K^2$, or a del Pezzo surface of degree 5 or 6.
Since we do not want to return back to $X$ or $\bfF_0$ we may assume that the end of the link $Y'$ is either a conic bundle
or a del Pezzo surface of degree 5, or $\bbP^2$.
If $Y' = \bbP^2$, then Proposition 7.13, case 2, tells us that $\bfF_0$ must contain a $G$-invariant $K$-rational point.
If $Y'$ is a del Pezzo surface of degree 5, then the same Proposition tells us that $Z\to Y'$ is the blow-up of a $G$-invariant subscheme of degree 5.
Finally, if $Y'$ is a conic bundle, we may continue to do elementary links staying in the class $\calC$ and at some point we have
to link a conic bundle with a del Pezzo surface $Y''$.
Proposition 7.12 tells us that $Y''$ is either a del Pezzo surface of degree 4 or $\bfF_0$.
Since we do not want to return back to $\bfF_0$, we may assume that $Y''$ is a del Pezzo surface of degree 4.
However, we find from Proposition 7.13, case 5, that we are stuck here since any elementary link  relates $Y''$  only with itself.

 Assume $X$ is birationally $G$-isomorphic to $\bbP_K^2$. Then the previous analysis  shows that  $X$ must have  a $G$-invariant rational
 $K$-point allowing us to find an elementary link with $\bfF_0$.  To continue, we need to find either a $K$-rational $G$-equivariant point on $\bfF_0$
to link the latter with $\bbP_K^2$, or to find  a $G$-invariant $0$-dimensional subscheme of length 5 to link $\bfF_0$ with a del Pezzo surface $\calD_5$ of degree 5.
The only elementary link which ends not at a del Pezzo surface of degree 5 or $\bfF_0$ is a link
 connecting to $\bbP_K^2$. It follows from Proposition 7.13, case 4, that to perform this link we need a $K$-rational $G$-invariant point on $\calD_5$.

Here we exhibit possible elementary links relating a del Pezzo $G$-surface $(\calD_6,G)$  with $(\bbP^2,G)$.

\xymatrix{&&&&Z\ar[dl]\ar[dr]&&Z'\ar[dl]\ar[dr]\\
&&&\calD_6&&\bfF_0&&\bbP^2}

This is possible only if $\bfF_0$ has a $G$-invariant $K$-rational point.

\xymatrix{&&&Z\ar[dl]\ar[dr]&&Z'\ar[dl]\ar[dr]&&Z''\ar[dl]\ar[dr]\\
&&\calD_6&&\bfF_0&&\calD_5&&\bbP_K^2}

This is possible only if $\bfF_0$  has a $G$-invariant closed subscheme of degree $5$, and also $\calD_5$ has a $K$-rational $G$-invariant point.
\end{example}

\section{Maximal tori in $\SU(3),\PSU(3)$}
Let $\SL_3$ be the split simply connected  simple group of type $\AA_2$ over the field of real numbers.  Let $\SU_3$ be its real form
defined by the element of $H^1(\Gal(\bbC/\bbR),\SL_3(\bbC))$ represented by the map $A\mapsto \bar{A}^{-1}$.
Its group of real points $\SU_3(\bbR)$ is isomorphic to the group $\textup{SU}(3)$ of unitary $3\times 3$ complex matrices.
A maximal torus $\bbT$ in $\SU_3$ is a real form of the standard
torus $(\bbC^*)^2 = \{(z_1,z_2,z_3)\in (\bbC^*)^3:z_1z_2z_3 = 1\}.$ It is defined by the map $(z_1,z_2,z_3)\mapsto
(\bar{z}_1^{-1},\bar{z}_2^{-1},\bar{z}_3^{-1})$ and it is isomorphic to $(\bbS^1)^2$, where
$\bbS^1 = \Spec~\bbR[x,y]/(x^2+y^2-1)$ with the natural structure of an algebraic  group  over $\bbR$.
The group of real points of $\bbS^1$ is the circle $\textup{SU}(1) = \{z\in \bbC: |z| = 1\}$.
 Its complex points are
$\{(z_1,z_2)\in \bbC^2:z_1^2+z_2^2 = 1\}$. The isomorphism $\bbS^1(\bbC) \to \bbC^*$ is given by $(z_1,z_2)\mapsto z = z_1+iz_2$.

Let $C = \Proj~\bbR[t_0,t_1,t_2]/(t_1^2+t_2^2-t_0^2)$ be the standard compactification of $\bbS^1$.
It is a plane nonsingular conic  defined over $\bbR$. Its real points satisfying $t_0 \ne 0$ are identified with
$\textup{SU}(1)$ via the map $a+bi\mapsto [a,b,1]$. Let
$$f:\mathbb{P}^1\to C, \quad [u,v]\mapsto [u^2-v^2,2uv,u^2+v^2]$$
be the rational parameterization of $\bbS^1$ defined over $\bbR$. We have
$$[u,v]\cdot [u',v']:= [uu'-vv',uv'+u'v]$$ is mapped to
{\small $$[(uu'-vv')^2-(uv'+u'v)^2,2(uu'-vv')(uv'+u'v),(uu'-vv')^2+(uv'+u'v)^2] =$$
$$= [(u^2-v^2)(u'{}^2-v'{}^2)-4uvu'v',(u^2-v^2)2u'v'+(u'{}^2-v'{}^2)2uv,(u^2+v^2)(u'{}^2+v'{}^2)].$$}
This shows that the restriction of the map $f$ to  the open subset $D^+(u^2+v^2)$  is a homomorphism of groups.

Now let us consider the subvariety $X$ of $(\mathbb{P}^1)^3$ given by the condition that $x\cdot y\cdot z = (1,0)$. It is given by the equation
$$uu'v''-vv'v''+uv'u''+u'vu'' = 0.$$
This is a compactification of the maximal torus $\bbT$ in $\SU_3$. The equation is given by a trilinear function, hence $X$ is a hypersurface in
$(\bbP_K^1)^3$ of type $(1,1,1)$. By the adjunction formula,
$$K_X = (K_{(\mathbb{P}^1)^3}+X)\cdot X  = -(h_1+h_2+h_3).$$
This shows that $X$ is a del Pezzo surface, anticanonically embedded in $\mathbb{P}^7$ by means of the Segre map
$(\mathbb{P}^1)^3\hookrightarrow \mathbb{P}^7$. Here $h_i$ are the preimages of $\mathcal{O}_{\mathbb{P}^1}(1)$ under the projections $p_i:X\to \mathbb{P}^1$.
The degree of the del Pezzo surface $X$  is equal to $(h_1+h_2+h_3)^3 = 6h_1h_2h_3 = 6$.
Over $\bbC$, a del Pezzo surface of degree 6 is isomorphic to the blow-up of three non-collinear points in $\bbP^2$.

The boundary $X\setminus \bbT$ of the torus $\bbT$ consists of three irreducible (over $\bbR$) components $p_i^{-1}(V(u^2+v^2))$.
Over $\bbC$, each such component splits into two disjoints curves isomorphic to $\bbP^1$. The boundary becomes a hexagon of lines in the
anticanonical embedding. The opposite sides are the pairs of conjugate lines.
The group of automorphisms of the root system of type $\AA_2$ of the group $\SU_3$ is isomorphic to the dihedral group $D_{6}$ of order 12
(also isomorphic to the direct product $\frakS_3\times \bbZ/2\bbZ$). Its standard action on $\bbT$ extends to a faithful action on the
compactification $X$.  It acts on the hexagon via its obvious symmetries.

Note that the Picard group $\Pic(X_\bbC)$ is generated by the classes $e_0,e_1,e_2,e_3$, where $e_0$
is the class of the preimage of a line under the blow-up $X_\bbC = X_\bbC\to \bbP_\bbC^2$, and $e_i$ are the classes of the exceptional curves.
The hexagon of lines on $X$ consists of the six lines with the divisor classes
$$e_1,\ e_2,\ e_3,\  f_1= e_0-e_2-e_3,\  f_2 = e_0-e_1-e_3,\  f_3 = e_0-e_1-e_2.$$
The pairs of
opposite sides are $\{f_i,e_i\}$. The group $\frakS_3$ acts on $\Pic(X)$ by permuting $e_1,e_2,e_3$, and the Galois group acts on $\Pic(X)$ by $f_i\mapsto e_i$. Note that
$-K_X = 3e_0-e_1-e_2-e_3$ and, since $K_X$ is Galois invariant, the conjugation isometry of $\Pic(X_\bbC)$  sends $e_0$ to $2e_0-e_1-e_2-e_3 = -K_X-e_0$ and
$e_0-e_i$ to $-K_X-e_0-(e_0-e_j-e_k) = e_0-e_i$. This shows that the pencil of conics $|e_0-e_i|$ defines a map $p_i:X\to \bbP^1$ over $\bbR$. This defines our embedding
$$X\hookrightarrow (\bbP^1)^3\hookrightarrow \bbP^7.$$
Also note that the invariant part $\Pic(X)^{\frakS_3\times \Gal(\bbC/\bbR)} = \bbZ K_X$, i.e. $X$ is a minimal $\frakS_3$-surface over
$\bbR$.

Consider the real point $e\in \bbT(\bbR)$, the unit element of the torus.
The tangent plane to the Segre variety $s(\bbP^1\times \bbP^1\times \bbP^1)$ in $\bbP^7$ is spanned by the images
of $e\times \bbP^1\times \bbP^1,\ \bbP^1\times e\times \bbP^1$ and $\bbP^1\times \bbP^1\times e$.
Its intersection with $X$ is the point $e$.
Consider the projection $\bbP^7\da \bbP^3$ from the tangent plane of $X$ at $e$.
Its restriction to $X$ defines a rational map $X\dasharrow Q$, where $Q$ is
 a nonsingular  quadric $Q$ in $\bbP^3$. In fact, the rational map is the composition $\tau\circ \pi^{-1}$, where $\pi:X'\to X$ is the blow-up of the point $e$,
and $\tau:X'\to Q$ is the blow-down of the proper transforms of three conics $R_i$, the images of
$(\bbP^1\times \bbP^1\times \{e\})\cap X,\ (\{e\}\times \bbP^1 \times \bbP^1)\cap X,$ and $(\bbP^1\times \{e\}\times \bbP^1)\cap X$.
Note that, $R_i^2 = 0$ on $X$, and $\bar{R}_i^2 = -1$ on $X'$. We have $K_{X'}^2 = K_X^2-1 = 6-1 = 5$, and $K_Q^2 = 5+3 = 8$,
so $Q$ is a del Pezzo of degree 8, i.e a quadric or  $\mathbf{F}_1$. But the latter is not embedded in $\bbP^3$ as a normal surface.

The surface $X$ has three $\frakS_3$-invariant  points $e,\eta,\eta^2\in \bbT(\bbR)$ corresponding to the diagonal matrices in $\textup{SU}(3)$. The image of
$\eta$ is $\frakS_3$-invariant which is a real point in the real structure of $Q$ defined by the map $X\to Q$.
Projecting from this point, we see that $Q$ is birationally trivial over $\bbR$ as a $\frakS_3$-surface.

Applying Proposition \ref{prop:cor6.5}, we obtain

\begin{theorem} \label{thm:su3-Igor}
The group $\SU_3$ is a Cayley group.
\end{theorem}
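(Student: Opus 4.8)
The plan is to reduce the statement to the birational geometry already assembled in this section and then to apply Proposition \ref{prop:cor6.5}. By that proposition, $\SU_3$ is Cayley over $\bbR$ precisely when there is a $W(\SU_3,\bbT)$-equivariant birational isomorphism $\bbT\dashrightarrow\Lie(\bbT)$ defined over $\bbR$, where $W(\SU_3,\bbT)\cong\frakS_3$ acts on $\bbT$ as the Weyl group and on $\Lie(\bbT)$ by the differential of that action. It therefore suffices to produce such a map, or equivalently (passing to smooth projective models) to show that the minimal del Pezzo $\frakS_3$-surface $X$ of degree $6$ compactifying $\bbT$ is $\frakS_3$-equivariantly birationally isomorphic over $\bbR$ to the projectivization of the $2$-dimensional linear representation $\Lie(\bbT)$.

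First I would use the data already set up: the anticanonical model $X\hookrightarrow(\bbP^1)^3\hookrightarrow\bbP^7$, the faithful $\frakS_3$-action permuting the three rulings, and the three real $\frakS_3$-fixed points $e,\eta,\eta^2\in\bbT(\bbR)$ arising from the scalar matrices $\one,\zeta\one,\zeta^2\one$ in $\mathrm{SU}(3)$. The first link is the projection of $\bbP^7$ from the tangent plane to $X$ at the unit $e$. Since $e$ is $\frakS_3$-fixed and the three conics $R_i$ through $e$ are permuted by $\frakS_3$, this link---blow up $e$ to obtain $X'$, then contract the proper transforms $\bar R_i$---is $\frakS_3$-equivariant and defined over $\bbR$. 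I would then confirm via $K_{X'}^2=5$ and $K_Q^2=8$, together with the fact that $\mathbf{F}_1$ admits no normal embedding in $\bbP^3$, that the target is a smooth quadric $Q\subset\bbP^3$.

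Next I would observe that the image of the second $\frakS_3$-fixed real point $\eta$, which does not lie on the contracted locus, is again a real $\frakS_3$-fixed point $q\in Q(\bbR)$, and that stereographic projection of $Q$ from $q$ is a birational isomorphism $Q\dashrightarrow\bbP^2$ over $\bbR$ which is $\frakS_3$-equivariant because $q$ is fixed. This exhibits $Q$, hence $\bbT$, as $\frakS_3$-equivariantly birationally trivial over $\bbR$: the resulting $\bbP^2$ carries the linear $\frakS_3$-action determined by the tangent representation $T_qQ$, a faithful $2$-dimensional real representation of $\frakS_3$, necessarily the standard (reflection) representation, hence agreeing with the action on $\Lie(\bbT)$. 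Identifying $\bbP^2$ with $\bbP(\Lie(\bbT)\oplus\bbR)$ and restricting to the affine chart yields the desired $\frakS_3$-equivariant birational isomorphism $\bbT\dashrightarrow\Lie(\bbT)$ over $\bbR$, and Proposition \ref{prop:cor6.5} then gives the theorem.

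The main obstacle is keeping every step simultaneously $\frakS_3$-equivariant and defined over $\bbR$ rather than merely over $\bbC$. Concretely, one must check that the exceptional locus of the projection from the tangent plane forms a single $\frakS_3$-orbit of curves defined over $\bbR$ so that the blow-up and blow-down both descend to $\bbR$; that the degree count genuinely forces the smooth quadric rather than $\mathbf{F}_1$; and, at the end, that the tangent $\frakS_3$-representation at $q$ is the reflection representation, so that the final linear model really is $\bbP(\Lie(\bbT)\oplus\bbR)$ and not some other faithful real form. This last identification is precisely what pins the abstract birational triviality of $Q$ to the specific comparison with $\Lie(\bbT)$ demanded by Proposition \ref{prop:cor6.5}.
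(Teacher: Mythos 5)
Your proposal is correct and follows essentially the same route as the paper's Appendix proof: compactify the maximal torus as the degree-$6$ del Pezzo surface $X\subset(\bbP^1)^3$, project from the tangent plane at the $\frakS_3$-fixed unit $e$ to land on a smooth quadric $Q\subset\bbP^3$, then project from the image of the second fixed point $\eta$ to obtain $\frakS_3$-equivariant birational triviality over $\bbR$, and conclude by Proposition \ref{prop:cor6.5}. Your closing paragraph, identifying the tangent $\frakS_3$-representation at $q$ as the reflection representation so that the final $\bbP^2$ really is $\bbP(\Lie(\bbT)\oplus\bbR)$, makes explicit a step the paper leaves implicit, and is a worthwhile addition rather than a deviation.
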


Next we consider the group $\PSU(3)$.
It is the quotient of $\SU_3$ by the cyclic group $\mu_3$ of order 3. Its group $\PSU_3(\bbR)$ of real points is isomorphic to the
group $\textup{PSU}(3)$. A maximal torus of $\PSU_3$ is isomorphic to $\bbT/\mu_3$, where $\bbT$ is a maximal torus of $\SU_3$.
In the real picture from the previous section, its action on $\bbT$ is the multiplication map
$\sigma:(u,v) \mapsto (u,v)\cdot (1/2,\sqrt{3}/2).$
The action of $\mu_3$ extends to the compactification $X$ of the maximal torus $\bbT$ of $\SU_3$.
Obviously, it  leaves invariant the boundary, and has six isolated fixed points on the boundary; they are the vertices of the hexagon.
The automorphism group of the del Pezzo surface $X$ (over $\bbC$) is $(\bbC^*)^2\times D_6$,
and $\sigma$ belongs to the connected part, and hence acts identically on $\Pic(X)$.
In particular, it acts identically on the  sides of the hexagon of lines. The quotient  $Y = X/\mu_3$ is a singular compactification
of a maximal torus of $\PSU_3$.
It has six singular quotient singularities  of type $\frac{1}{3}(1,1)$, a minimal resolution $Y'\to Y$
has six exceptional curves $E_i$ with $E_i^2 = -3$.
The proper transforms of the images of the sides of the hexagon are six disjoint $(-1)$ curves\footnote{a $(-n)$-curve is a smooth rational curve
on a nonsingular projective surface with self-intersection equal to $-n$.}.
Together with $E_i$'s they form a 12-gon\footnote{One can also arrive at this 12-gon by first blowing up the vertices of the hexagon,
then extend the action of $\mu_3$ to the blow-up, and then taking the quotient.}.
 All of this is defined over $\bbR$, the Galois group switches opposite
$(-3)$-sides and opposite $(-1)$-sides of the 12-gon.
Now we can blow down the $(-1)$-sides to get a nonsingular surface $Z$ with a hexagon of $(-1)$-curves formed by the images of the $(-3)$-sides.
So, $Z$ is a del Pezzo surface of degree six again!
 We have found a nonsingular $\frakS_3\times \Gal(\bbC/\bbR)$-invariant minimal compactification
of a maximal torus of  $\PSU_3$ which is a del Pezzo surface of degree six.

Note that the group $\frakS_3\times \Gal(\bbC/\bbR)$ acts on $\Pic(Z)$ in the same way as it acts in the  case of $\SU_3$.
So, as before, we have a $\frakS_3$-invariant  embedding $Z\hookrightarrow \bbP^7$ defined over $\bbR$
with a rational point equal to the orbit $\bar{e}$ of the origin $e\in X$ which consists of the diagonal matrices of $\text{SU}(3)$.
This time we have no any other $\frakS_3$-invariant rational points on $X$ (they obviously do not lie on the boundary).
By projection from the point $\bar{e}$, we obtain a quadric $Q$.

The projection defines a $\frakS_3$-equivariant isomorphism over $\bbR$ between the complement of the three conics on $X$
and the complement of the image of the exceptional curve over $\bar{e}$ in $Q$. The latter curve is a conic section $R'$ of $Q$.
The three conics are permuted under $\frakS_3$, so $\frakS_3$ acts on $R'$ without fixed points.
Thus a $\frakS_3$-invariant real point on $Q$ must be the projection of a real $\frakS_3$-invariant point on the del Pezzo surface $X$.
There is none except the point which has been blown up.
Thus the quadric $Q$ has no $\frakS_3$-invariant real points. It follows from
Example \ref{ex1} that there is no  birational $\frakS_3$-equivariant map from $Z$ to $\bbP_\bbR^2$ (we are stuck at the first elementary link!).

Using Proposition \ref{prop:cor6.5}, we obtain

\begin{theorem}\label{thm:PGU3}
 The group $\PSU_3$ is not Cayley.
\end{theorem}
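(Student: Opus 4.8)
The plan is to apply Proposition~\ref{prop:cor6.5}, which reduces the Cayley property of $\PSU_3$ to the existence of an $\frakS_3$-equivariant birational isomorphism $T\birat\Lie(T)$ over $\bbR$, where $T$ is a maximal torus of $\PSU_3$ and $W(\PSU_3,T)\cong\frakS_3$. Here $\Lie(T)$ is the two-dimensional real reflection representation of $\frakS_3$, whose projective completion is $\bbP^2_\bbR$ with a linear $\frakS_3$-action; this completion is $\frakS_3$-equivariantly birational to $\Lie(T)$. Since a birational equivalence of the open varieties $T$ and $\Lie(T)$ is the same as a birational equivalence of any smooth $\frakS_3$-compactifications, the question becomes whether the smooth minimal $\frakS_3$-compactification $Z$ of $T$ constructed above --- the del Pezzo surface of degree $6$ with $\Pic(Z)^{\frakS_3\times\Gal(\bbC/\bbR)}=\bbZ K_Z$ --- is $\frakS_3$-equivariantly birationally isomorphic to $\bbP^2_\bbR$.

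To settle this negatively I would invoke the theory of elementary links. By Theorem~\ref{fact}, any birational $\frakS_3$-map between the minimal $\frakS_3$-surfaces $Z$ and $\bbP^2_\bbR$ is a composition of $\frakS_3$-equivariant elementary links. Because $\frakS_3\times\Gal(\bbC/\bbR)$ acts on $\Pic(Z)$ exactly as in the $\SU_3$ case, the classification of links carried out in Example~\ref{ex1} applies verbatim to $(Z,\frakS_3)$: the first link is forced to blow up an $\frakS_3$-invariant $\bbR$-rational point and project to a quadric $Q=\bfF_0$, after which every continuation toward $\bbP^2_\bbR$ requires a further $\frakS_3$-invariant real point on $Q$.

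The decisive step, which is where I expect the genuine work to lie, is to prove that $Q$ carries no $\frakS_3$-invariant real point, so that the chain of links cannot be continued. This is precisely where $\PSU_3$ diverges from $\SU_3$. The $\SU_3$-compactification had three $\frakS_3$-invariant real points $e,\eta,\eta^2$, arising from the diagonal matrices $\zeta I_3$ with $\zeta^3=1$; upon passing to $\PSU_3=\SU_3/\mu_3$ these three points are identified into the single point $\bar e$. Hence $Z$ has a \emph{unique} $\frakS_3$-invariant real point, namely the one we are compelled to blow up to start the link. After projecting from $\bar e$, the three contracted conics assemble into a single conic section $R'$ on $Q$ on which $\frakS_3$ acts by permuting them, hence without fixed points; so any $\frakS_3$-invariant real point of $Q$ would have to be the image of an $\frakS_3$-invariant real point of $Z$ distinct from $\bar e$, and there is none. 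Therefore $Q$ has no $\frakS_3$-invariant real point, and by Example~\ref{ex1} there is no $\frakS_3$-equivariant birational map $Z\birat\bbP^2_\bbR$. By Proposition~\ref{prop:cor6.5}, $\PSU_3$ is not Cayley.
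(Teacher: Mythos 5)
Your proposal is correct and follows essentially the same route as the paper's own proof in the Appendix: the same minimal degree-6 del Pezzo compactification $Z$ of the maximal torus with its unique $\frakS_3$-invariant real point $\bar e$, the same projection from $\bar e$ to a quadric $Q$ carrying no $\frakS_3$-invariant real point, and the same appeal to Example \ref{ex1} to show the chain of elementary links toward $\bbP^2_\bbR$ terminates after the first step. The only cosmetic difference is that the paper phrases the key obstruction via the conic section $R'$ (the image of the exceptional curve over $\bar e$, on which the $3$-cycles act without real fixed points), while you phrase it via the contracted conics; the content is identical.
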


\section{Maximal tori in $\SL_3$}

The group $\SL_3$ is a  simple algebraic group split over $\bbR$. Its group of real points $\SL_3(\bbR)$ is the group  of unimodular real
$3\times 3$-matrices. Its maximal torus is the standard torus
$\bbT = \Spec~\bbR[z_1,z_2,z_3]/(z_1z_2z_3-1)$. The group $\bbT(\bbR)$ of its real points is naturally isomorphic to
$\{(a,b,c)\in (\bbR^*)^3:abc = 1\}$ with the $\frakS_3$-action defined by permutation of the coordinates.
Obviously, a real $\frakS_3$-invariant point on $\bbT$ must be equal to the identity point $(1,1,1)$.

A natural $\bbT$-equivariant compactification of $\bbT$ is the cubic surface
$Y = \Proj~\bbR[t_0,t_1,t_2,t_3]/(t_1t_2t_3-t_0^3)$. It has three quotient singularities of type $\frac{1}{3}(1,2)$, rational double points of type $\AA_2$.
They are defined over $\bbR$. The exceptional curve over each singular point consists of two $(-2)$-curves $E_i+E_i'$ intersecting transversally at one point.
The intersection point $E_i\cap E_i'$ is a real point, hence the curves are isomorphic to $\bbP^1$ over $\bbR$. The group $\frakS_3$ permutes the pairs $(E_i,E_i')$.
After we minimally resolve $Y$ over $\bbR$, we obtain a surface isomorphic to the
blow-up of a del Pezzo surface of degree 6 at three vertices of the hexagon of lines. The boundary consists of a 9-gon with 9 consecutive
sides $R_1,\ldots,R_9$, where $R_1,P_2,R_4,R_5,R_7,R_8$ are $(-2)$-curves and the sides
$R_3,R_6,R_9$ are $(-1)$-curves. The latter curves are the proper transforms of the three lines on the cubic surface $Y$ that join the pairs of the singular points.
After we blow down ($\frakS_3$-equivariantly) the (-1) curves, we obtain a del Pezzo surface $X$ of degree 6 with a hexagon of lines at the boundary.
The linear system that defines the rational map $Y\da X$ consists of quadric sections of $Y$ passing through the singular point.
Note that both $X$ and $Y$ are $\frakS_3$-equivariant compactifications of $\bbT$.

\begin{theorem}\label{thm:SL3}
 $\SL_3$ is not Cayley.
\end{theorem}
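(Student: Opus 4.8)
The plan is to follow, essentially verbatim, the strategy used for $\PSU_3$ in the proof of Theorem \ref{thm:PGU3}, since the maximal torus $\bbT$ of $\SL_3$ admits the same kind of smooth del Pezzo compactification and the Weyl group $\frakS_3$ acts on it through coordinate permutations. First I would invoke Proposition \ref{prop:cor6.5}: the group $\SL_3$ is Cayley over $\bbR$ if and only if $\bbT$ and $\Lie(\bbT)$ are $\frakS_3$-equivariantly birationally isomorphic over $\bbR$. Since $\Lie(\bbT)$ is a $2$-dimensional $\bbR$-vector space carrying a linear $\frakS_3$-action, its natural compactification is $\bbP^2_\bbR$, so it suffices to decide whether the smooth compactification $X$ (the del Pezzo surface of degree $6$ constructed above from the cubic $Y$) is $\frakS_3$-equivariantly birationally isomorphic to $\bbP^2_\bbR$.

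Next I would record that $X$ is a \emph{minimal} $\frakS_3$-surface. Here the Galois group acts trivially, because the torus $\bbT$ is split; on the other hand, on $\Pic(X_\bbC)=\langle e_0,e_1,e_2,e_3\rangle$ the $3$-cycle permutes $e_1,e_2,e_3$, while a transposition interchanges the classes $e_i$ with the classes $f_i=e_0-e_j-e_k$ of the opposite sides of the hexagon of lines. A direct computation then gives $\Pic(X)^{\frakS_3}=\bbZ K_X$, so that $X$ is a minimal del Pezzo $\frakS_3$-surface of degree $6$, exactly as in the case of $\SU_3$. Consequently the decomposition into elementary links (Theorem \ref{fact}) and the classification of Example \ref{ex1} apply to $(X,\frakS_3)$.

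The heart of the argument is to start running the elementary links from $(X,\frakS_3)$ and to show that one is stuck at once. As in the $\PSU_3$ case, I would project $X$ from its $\frakS_3$-invariant real point $e=(1,1,1)\in\bbT(\bbR)$: blowing up $e$ and contracting the three conics through $e$, which are cyclically permuted by $\frakS_3$, produces a quadric $Q$, giving the first elementary link $X\dashrightarrow Q\cong\bfF_0$. By Example \ref{ex1}, in order to reach $\bbP^2_\bbR$ one must be able to continue past this link, which ultimately requires an $\frakS_3$-invariant $\bbR$-rational point on $Q$. I would show that there is none. Any such point is either the projection of an $\frakS_3$-invariant real point of $X$ lying off the blown-up locus, or it lies on the exceptional conic $R'$ over $e$. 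The former is impossible, because the only $\frakS_3$-invariant real point of $\bbT$ is $e$ itself, and there is no $\frakS_3$-invariant real point on the boundary hexagon, the $3$-cycle permuting its six vertices without fixed points; the latter is impossible, because $\frakS_3$ permutes the three contracted conics and hence acts on $R'$ without fixed points. Therefore $Q$ has no $\frakS_3$-invariant real point, the link cannot be continued, and $X$ is not $\frakS_3$-equivariantly birational to $\bbP^2_\bbR$.

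Combining this with Proposition \ref{prop:cor6.5} yields that $\SL_3$ is not Cayley. The main obstacle is precisely the verification that the quadric $Q$ carries no $\frakS_3$-invariant real point: this is the step where $\SL_3$, like $\PSU_3$, differs from $\SU_3$, where the two further invariant real points $\eta,\eta^2$ supplied a center of projection from which the quadric could be shown to be $\frakS_3$-birationally trivial. I would therefore take particular care to confirm that the boundary contributes no invariant real point and that $e$ is genuinely the unique $\frakS_3$-invariant real point of $\bbT$, so that after blowing up $e$ no invariant real point survives on $Q$.
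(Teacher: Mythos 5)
Your reduction via Proposition \ref{prop:cor6.5}, the choice of the degree-$6$ del Pezzo compactification $X$, and the first elementary link $X\da Q\cong\bfF_0$ obtained by blowing up the unique $\frakS_3$-invariant real point $e$ and contracting the three conics all agree with the paper, as does the verification that $Q$ carries no $\frakS_3$-invariant real point. But your argument stops the link-chasing too early, and your claim that continuing past $\bfF_0$ ``ultimately requires an $\frakS_3$-invariant $\bbR$-rational point on $Q$'' is not what Example \ref{ex1} actually gives. According to that example there are two ways out of $\bfF_0$: a link to $\bbP_\bbR^2$, which indeed needs an invariant rational point on $Q$, and a link to a del Pezzo surface $\calD_5$ of degree $5$, which needs only a $\frakS_3$-invariant $0$-dimensional subscheme of degree $5$ on $Q$. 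Such subschemes do exist here: the paper exhibits one as the union of the unique invariant point of degree $2$ (coming from the two conjugate scalar matrices in $\SL_3(\bbC)$) with any $\frakS_3$-orbit of three real points. So the chain $X\da\bfF_0\da\calD_5\da\bbP_\bbR^2$ is not blocked at $Q$, and the absence of an invariant real point on $Q$ alone does not finish the proof.

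The missing part --- which is the bulk of the paper's actual argument --- is the analysis of the final link $\calD_5\da\bbP_\bbR^2$, which requires a real $\frakS_3$-invariant point $q$ on $\calD_5$. The paper rules this out as follows: since $Q$ has no invariant real point, $q$ would have to lie on the image of an exceptional curve of $Z\to Q$ or of $Z\to\calD_5$ in the link $Q\leftarrow Z\rightarrow\calD_5$; the three exceptional curves over the real points of the orbit are permuted by $\frakS_3$; the exceptional curve over the degree-$2$ point splits into two disjoint conjugate curves carrying no real point; and the exceptional curves of $Z\to\calD_5$ are the proper transforms of the two rational curves of bidegrees $(2,1)$ and $(1,2)$ on $Q$, which cannot be individually $\frakS_3$-invariant because $\frakS_3$ interchanges the two rulings of $Q$. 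Only after all these cases are excluded does one conclude that no $\frakS_3$-equivariant birational map $X\da\bbP_\bbR^2$ exists. You need to supply this second stage; as written, your proof has a genuine gap at the step from $\bfF_0$ onward.
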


\begin{proof} By Proposition \ref{prop:cor6.5} it suffices to prove that  $(X,\frakS_3)$ is not birationally isomorphic to a
 $(\bbP_\bbR^2,\frakS_3)$. Suppose they are birationally isomorphic. It follows from Example \ref{ex1} that the first link must end at
 $\bfF_0\cong \bbP_\bbR^1\times \bbP_\bbR^1$ which we identify with a split nonsingular quadric $Q$ in $\bbP_\bbR^3$.
The link  consists of blowing up the unique $\frakS_3$-invariant real point on $X$, namely the point $e$, and then blowing down three $(-1)$-curves.
They are the images of the conics on $Y$ that, together with the three lines, are cut out by the quadrics $t_it_j - w^2 = 0$.
The conics are left invariant under the conjugation but permuted by $\frakS_3$. The action of $\frakS_3$ on $X$ shows easily
that the induced action of $\frakS_3$ on $Q$ permutes the two rulings (i.e. the two projections to $\bbP^1$).
It is easy to see, using the description of automorphisms of $\bbP_\bbR^1\times \bbP_\bbR^1$, that the quadric
 $Q$ has no real $\frakS_3$-invariant points, so the next elementary link relates $Q$ with a del Pezzo surface $\calD_5$ of degree 5. For
 this we need a $\frakS_3$-invariant $0$-dimensional subscheme $\fraka$ of degree 5.  It must consist of a $\frakS_3$-invariant point of
 degree 2 and an $\frakS_3$-orbit of three real points. It is easy to see that the only $\frakS_3$-invariant point of
 degree 2 is the image of two conjugate scalar matrices in $\SL_3(\bbC)$. There are plenty of $\frakS_3$-orbits of three real points.
 Now we have to  apply the elementary link $Q\leftarrow Z\rightarrow \calD_5$ with the target equal to a del Pezzo surface $\calD_5$ of degree 5.
Either we are stuck here and hence prove the assertion or we find a real $\frakS_3$-invariant point on $\calD_5$ to make the final elementary link with
$(\bbP^2,\frakS_3)$. Since $Q$ has no such points, a real $\frakS_3$-invariant point $q$ on $\calD_5$ lies on the image
of  an exceptional curve of $Z\to Q$ or on the image of  an exceptional curve of $Z\to \calD_5$.
The three exceptional curves on $Z$ over real points in $Q$ are permuted by
$\frakS_3$, so $q$ cannot lie on them. Also the exceptional curve on $Z$ over the complex point in $Q$ consists of two disjoint conjugate curves.
So, $q$ is not on them either. It follows from the description of the linear system defining the link, that the exceptional curves of
$Z\to \calD_5$ are the proper transforms $\bar{R}_1$ and $\bar{R}_2$ of the two rational curves $R_1$ and $R_2$ of degree 3 (of bidegrees $(2,1)$ and $(1,2)$) on $Q$.
Since $\frakS_3$ permutes the two rulings on $Q$, it cannot leave $R_1$ or $R_2$ invariant.
Thus the images of the exceptional curves $\bar{R}_1$ and $\bar{R}_2$ are not fixed under $\frakS_3$.
Thus the point $q$ cannot be one of these points. This shows that the last elementary link
$\calD_5\da \bbP^2$ is not possible.
\end{proof}

\begin{remark} The real split group $\PSL_3$ is known to be a Cayley group (see \cite[Example 1.11]{LPR}). Using Proposition \ref{prop:cor6.5},
 this fact immediately follows from the existence of a $\frakS_3$-equivariant compactification of a maximal torus of $\PSL_3$ isomorphic to the projective plane.
In fact, consider the cubic surface $X$ from the proof of the previous theorem. The quotient of this surface by
the cyclic group generated by the transformation $[t_0,t_1,t_2,t_3]\mapsto [\eta_3t_0,t_1,t_2,t_3]$ is isomorphic to $\bbP_\bbR^2$ via the
projection map from the point $[1,0,0,0]\in \bbP^3\setminus X$.
 Its maximal  torus $\bbT$ is the standard torus in $\bbP_\bbR^2$.
\end{remark}

\bigskip\bigskip\bigskip\bigskip\bigskip

\appendix
\setcounter{section}{2}
\setcounter{lemma}{0}

\begin{center}
{ \bf Appendix B\\ \bigskip \ BAD CHARACTERISTICS}
\bigskip
\end{center}
This appendix was contributed by the anonymous referee.
Since the referee's original exposition has been  changed,
the responsibility  for possible inaccuracies or mistakes lies on the author of the paper.

\begin{theorem}\label{thm:referee}
Let $K$ be a field of characteristic $p>0$.
We write $\G_m$ for the multiplicative group $\G_{m,K}$, and $\G_a$ for the additive group $\G_{a,K}$.
Let $A$ be a central simple algebra of degree $n$ over $K$.
Assume that $p|n$ and that $4|n$ if $p=2$.
Then the group $G=\PGL_1(A):=A^\times/\G_{m}$ is Cayley.
\end{theorem}

\begin{proof}
For two $G$-varieties $X$ and $Y$ over $K$,
we write $X\sim Y$ and say that $X$ is equivalent to $Y$
if $X$ is $G$-equivariantly birationally equivalent to $Y$.

We regard $A$ also as a linear $K$-space, and we consider the projective space $\PP(A)$.
Clearly $G\sim\PP(A)$.

We denote by $t\colon A\to K$ the reduced trace.
Set
\[ V=\{a\in A\ |\ t(a)=1\}, \]
then $V$ is a $G$-variety, and it is easy to see that $\PP(A)\sim V$, hence $G\sim V$.
Since $p|n$, we have $t(x)=0$ for any $x\in K\subset A$,
hence the additive group $\G_{a}$ acts on $V$ by translations:
\[ x.a=x+a,\quad x\in K,\ a\in V\subset A.\]
Since $t(1)=0$, we can define the linear function $t$ also on $\Lie(G)=A/\langle 1\rangle$.
Set
\[ W=\{ b\in A/\langle 1\rangle\ |\ t(b)=1 \}. \]
Clearly $W=V/\G_a$.

Note that the rational map
\[ \Lie(G) =  A/\langle 1\rangle \to W\times_K \G_m, \quad b\mapsto (\,b/t(b),\, t(b)\,)\text{ for }b\in A/\langle 1\rangle \]
gives an equivalence $\Lie(G)\sim W\times_K\G_m$.
On the other hand, by Lemma \ref{lem:referee} below we have $V\sim W\times_K\G_a$.
Thus
\[
 G\sim V  \sim W\times_K\G_a\sim W\times_K\G_m   \sim \Lie(G),\]
which proves the theorem.
\end{proof}

\begin{lemma} \label{lem:referee}
$V\sim W\times_K\G_a$.
\end{lemma}

\begin{proof}
Consider the projection
\[ V\to V/\G_a=W.\]
It is enough to show that $q$ has an equivariant section.
Denote by $c_2(a)$ the second coefficient of the reduced characteristic polynomial of an element $a\in A$.
In characteristic 0, $c_2$ is of course quadratic.
But here, with our assumptions on $p$ and $n$, we have
\[c_2(a + x) = \frac{n(n-1)}{2}x^2+(n-1)t(a)x+c_2(a)=-t(a)x+c_2(a)\]
for $a\in A$ and $x\in K$ (check it in the split case for diagonal matrices, this
is easy and implies the general formula).
Hence the map $s\colon V/\G_a\to V$
sending a class $y=a+\G_a\in V/\G_a $ to the element $s(y):=a+c_2(a)\in y\subset  V$
is well defined and is an equivariant section of $q$, as required.
\end{proof}

\begin{proposition}\label{prop:referee}
If $p=2$, then $G=\PGL_{2,K}$ is not Cayley.
\end{proposition}

\begin{proof}
Indeed, assume for the sake of contradiction that there exists a $G$-equivariant birational isomorphism
\[ \varphi\colon V:=\{a\in M_2(K) \ | t(a)=1\}\birat M_2(K)/\langle 1\rangle. \]
Pick a generic invertible matrix $b\in V$.
Then $\varphi(b)$ commutes with $b$,
hence $\varphi$ restricts to a $\Z/2\Z$-equivariant birational isomorphism
\[ \psi\colon \{a\in L\ |\ t(a)=1\}\birat L/\langle 1 \rangle, \]
where $L$ is the maximal \'etale subalgebra of $M_2(K)$ generated by $b$,
and $\Z/2\Z$ is the Weyl group of $G$ with respect to its maximal torus $L^\times/\G_m$.
We split the \'etale algebra $L$ by a field extension $K'/K$ (quadratic or trivial), then  the Weyl group $\Z/2\Z$ acts on
$L\otimes_K K'=K'\times K'$ by transposition of the factors,
and we obtain a $\Z/2\Z$-equivariant birational isomorphism
\[ \psi\colon \{(a_1,a_2)\in (K')^2\ |\ a_1+a_2=1\}\birat (K')^2/\langle(1,1)\rangle.\]
But this is absurd: the Weyl group $\Z/2\Z$ acts faithfully on the left, but trivially on the right.
\end{proof}

\begin{remark}[of the referee]
\label{rem:referee}
 If  $p=2$, $2|n$,  $n\equiv 2\mod 4$, and $n\ge 6$,  then $\PGL_1(A)$ is Cayley.
(If $n\equiv 2\mod 8$, then one may use $c_4$ instead of $c_2$ in the proof;
otherwise one may cook something `linear' out of $c_4$ and powers of $c_2$.)
\end{remark}

\end{document}